\documentclass[11pt,a4paper,twoside]{article}
\RequirePackage{filecontents}
\usepackage[utf8]{inputenc}
\usepackage[UKenglish]{babel}
\usepackage[T1]{fontenc}
\usepackage{fancyhdr}
\usepackage[left=2cm,right=2cm,top=2cm,bottom=2cm]{geometry}
\usepackage{etex}
\usepackage[etex=true,export]{adjustbox}
\usepackage{float}
\usepackage{xkeyval}
\usepackage{mathtools}
\usepackage{amsmath}
\usepackage{amsthm}
\usepackage{amsfonts}
\usepackage{amssymb}
\usepackage{amssymb, amsfonts, amsmath, amsthm, times}
\usepackage{epsfig}
\usepackage{lipsum}
\usepackage{graphicx}
\usepackage[sort, numbers]{natbib}
\usepackage{placeins}
\usepackage{hyperref}
\usepackage[dvipsnames]{xcolor}
\usepackage{ulem}
\usepackage{bbm}
\usepackage{authblk}

\newcommand{\Z}{\mathbbm{Z}}

\theoremstyle{definition}
\newtheorem{mydef}{Definition}[section]
\newtheorem{rmq}{Remark}[section]
\newtheorem{empl}{Example}[section]
\newtheorem{conj}{Conjecture}[section]
\theoremstyle{plain}
\newtheorem{theo}{Theorem}[section]
\newtheorem{lem}[theo]{Lemma}
\newtheorem{prop}[theo]{Proposition}
\newtheorem{cor}[theo]{Corollary}
\newtheorem{qst}{Question}

\pdfpkresolution=8000
\binoppenalty=\maxdimen
\relpenalty=\maxdimen
\tracingstats=0
\title{Generating links that are both quasi-alternating and almost alternating}
\author[1]{Hamid Abchir}
\author[2]{Mohammed Sabak}
\affil[1]{Hassan II University. EST. Route d'El Jadida Km 7. B.P. 8012. 20100 Casablanca. Morocco.
	{e-mail: hamid.abchir@univh2c.ma}}
\affil[2]{Hassan II University. Ain Chock Faculty of sciences. Route d'El Jadida Km 8. B.P. 5366. Maarif. 20100 Casablanca. Morocco. {e-mail: mohammed.sabak-etu@etu.univh2c.ma}}
\begin{document}
\maketitle
\begin{abstract}
We construct an infinite family of links which are both almost alternating and quasi-alternating from a given either almost alternating diagram representing a quasi-alternating link, or connected and reduced alternating tangle diagram. To do that we use what we call a dealternator extension which consists in replacing the dealternator by a rational tangle extending it. We note that all not alternating and quasi-alternating Montesinos links can be obtained in that way. We check that all the obtained quasi-alternating links satisfy Conjecture 3.1 of Qazaqzeh et al. (JKTR 22 (06), 2013), that is the crossing number of a quasi-alternating link is less than or equal to its determinant. We also  prove that the converse of Theorem 3.3 of Qazaqzeh et al. (JKTR 24 (01), 2015) is false.
\end{abstract}

\section{Introduction}
The set of quasi-alternating links appeared in the context of link homology as a natural generalization of alternating links. They were defined in \cite{ozsvath2005heegaard} where the authors showed that they are homologically thin for both Khovanov homology and knot Floer homology as alternating links with which they share many properties. On the other hand, it was shown in \cite{ozsvath2005heegaard} that every non-split alternating link is quasi-alternating and that the branched double cover of any quasi-alternating link is an $L$-space.\\
If $D$ is a link diagram, we denote by $\mathcal{L}(D)$ the link for which $D$ is a projection. Quasi-alternating links are defined recursively as follows:
\begin{mydef}
 The set $\mathcal{Q}$ of \textbf{quasi-alternating links} is the smallest set of links satisfying the following properties:
\begin{enumerate}
\item The unknot belongs to $\mathcal{Q}$,
\item If $L$ is a link with a diagram $D$ containing a crossing $c$ such that 
\begin{enumerate}
\item for both smoothings of the diagram $D$ at the crossing $c$ denoted by $D^c_0$ and $D^c_\infty$ as in Fig. \ref{fig.1}, the links $\mathcal{L}(D^{c}_{0})$ and $\mathcal{L}(D^{c}_{\infty})$ are in $\mathcal{Q}$ and,
\item $\det(L)=\det(\mathcal{L}(D^{c}_{0}))+\det(\mathcal{L}(D^{c}_{\infty})).$
\end{enumerate}
Then $L$ is in $\mathcal{Q}$. In this case we will say that $c$ is a
\textit{quasi-alternating crossing} of $D$ and that $D$ is quasi-alternating at $c$.
\end{enumerate}

\end{mydef}
\begin{figure}[H]
\centering
\includegraphics[width=0.4\linewidth]{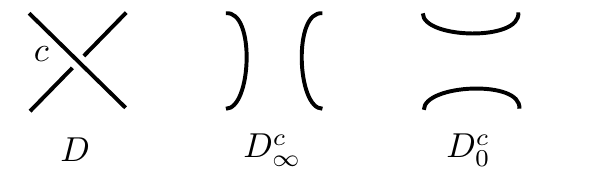}
\caption{The link diagram $D$ and its smoothings $D^{c}_{0}$ and $D^{c}_{\infty}$ at the crossing $c$.}
\label{fig.1}
\end{figure}

Champanerkar and Kofman proved that quasi-alternating property is inheritable via \textit{rational extension} of a quasi-alternating crossing \cite{champanerkar2009twisting}, that is the operation which consists in replacing a quasi-alternating crossing of a diagram by a rational tangle extending it as in Fig. \ref{fig10}.\\
Champanerkar and Kofman proved the following theorem.

\begin{theo}[Theorem 2.1, \cite{champanerkar2009twisting}]
  If $D$ is a quasi-alternating link diagram, let $D^{c \leftarrow T}$ be the link diagram obtained by replacing any quasi-alternating crossing $c$ with an alternating rational tangle $T$ that extends $c$. Then $D^{c \leftarrow T}$ is quasi-alternating.
\label{th1.1}
\end{theo}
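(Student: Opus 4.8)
The plan is to reduce the theorem to inserting one crossing at a time and to control the determinant through the Conway fraction of the tangle.

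\emph{Reduction.} It is enough to treat a single quasi-alternating crossing $c$ of $D$. Write the alternating rational tangle $T$ that extends $c$ in its standard continued-fraction form, with Conway fraction $\beta/\alpha$, $\gcd(\alpha,\beta)=1$ (after mirroring, $\alpha,\beta\ge 0$). Realise $T$ as the last term of a sequence of alternating rational tangles $T_0,T_1,\dots,T_m=T$ with $T_0=c$ (the $\pm1$-tangle) such that $T_{k+1}$ is obtained from $T_k$ by adding one crossing inside the current twist region, each $T_k$ being alternating and extending $c$. Put $D_k:=D^{c\leftarrow T_k}$, so $D_0=D$ and $D_m=D^{c\leftarrow T}$. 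The theorem follows from the claim that every $D_k$ is quasi-alternating and, more precisely, that the crossing of $D_k$ most recently added is a quasi-alternating crossing of $D_k$; this I would prove by strong induction on $k$, the base case $k=0$ being the hypothesis on $c$.

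\emph{Inductive step.} Let $c_{k+1}$ be the crossing that passes from $D_k$ to $D_{k+1}$. The geometric heart of the argument is that the two smoothings of $D_{k+1}$ at $c_{k+1}$ are, up to isotopy, $D_k$ and a diagram $D_k':=D^{c\leftarrow T_k'}$, where the Conway fraction $\beta_k'/\alpha_k'$ of $T_k'$ is the Stern--Brocot parent of the fraction $\beta_{k+1}/\alpha_{k+1}$ of $T_{k+1}$ distinct from that of $T_k$; one checks that this parent is the fraction either of some $T_i$ with $i\le k$ or of the $0$- or $\infty$-tangle. Hence $D_k'$ is an earlier member of the family (quasi-alternating by the induction hypothesis) or else one of $D^c_0,\,D^c_\infty$ (quasi-alternating because $c$ is a quasi-alternating crossing of $D$). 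It remains to check $\det(\mathcal{L}(D_{k+1}))=\det(\mathcal{L}(D_k))+\det(\mathcal{L}(D_k'))$. For this I would use that replacing a crossing by a rational tangle of fraction $p/q$ affects the determinant linearly in $(p,q)$ relative to the two elementary fillings of the slot, namely $\det(\mathcal{L}(D^{c\leftarrow p/q}))=|p|\,\det(\mathcal{L}(D^c_\infty))+|q|\,\det(\mathcal{L}(D^c_0))$ (up to the usual normalisation), a fact one extracts from the behaviour of the Goeritz form, or of the Kauffman bracket, under rational tangle replacement. Since $\beta_{k+1}/\alpha_{k+1}$ is the mediant of the fractions of $T_k$ and $T_k'$, i.e. $\beta_{k+1}=\beta_k+\beta_k'$ and $\alpha_{k+1}=\alpha_k+\alpha_k'$, the required additivity is immediate, the induction closes, and applying the claim at $k=m$ gives the theorem.

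\emph{Main difficulty.} The delicate part is the determinant bookkeeping: proving the linear dependence of $\det(\mathcal{L}(D^{c\leftarrow p/q}))$ on $(p,q)$ with exactly the right coefficients, and aligning it with the continued-fraction (Stern--Brocot) recursion so that the quasi-alternating determinant identity holds at every stage. A secondary point needing care is the combinatorial claim that an alternating rational tangle extending $c$ can really be grown from $c$ one crossing at a time through alternating tangles each extending $c$, and that at each stage the two smoothings of the new crossing are precisely the two mediant-parent diagrams, so that the ``other'' parent never leaves the set $\{D_i\}\cup\{D^c_0,D^c_\infty\}$.
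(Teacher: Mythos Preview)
The paper does not supply its own proof of this statement: Theorem~\ref{th1.1} is quoted verbatim from \cite{champanerkar2009twisting} and used as a black box throughout. So there is nothing in the present paper to compare your argument against.

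That said, your sketch is essentially the original Champanerkar--Kofman argument. They too build the extending rational tangle one crossing at a time along its continued-fraction expansion, observe that the two smoothings at the newest crossing are earlier members of the sequence (or the $0$/$\infty$ fillings), and close the induction via the determinant identity. The linear dependence you need, $\det(\mathcal{L}(D^{c\leftarrow p/q})) = p\det(\mathcal{L}(D^c_\infty)) + q\det(\mathcal{L}(D^c_0))$ for $p,q\ge 0$ when $c$ is quasi-alternating, is exactly what they establish (it is the additive, rather than subtractive, case precisely because the quasi-alternating hypothesis forces $\det(D)=\det(D^c_0)+\det(D^c_\infty)$ with both summands positive); compare also Proposition~\ref{prop3.2} of the present paper, which records the same formula in the special case $D=n(\,\cdot\,+T)$. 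Your mediant/Stern--Brocot bookkeeping is then the right way to see that the determinant identity propagates, and the combinatorial claim about growing an alternating rational tangle extending $c$ one crossing at a time is straightforward from the standard form $[a_1,\dots,a_n]$ with all $a_i\cdot\varepsilon(c)\ge 1$. Nothing is missing.
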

Thus the last theorem provides a way to get new quasi-alternating diagrams from former ones.\\
Note that when one extends a quasi-alternating crossing, all crossings of the inserted tangle become quasi-alternating also, including the extended crossing itself.\\
In this paper we will give an other way to build new quasi-alternating diagrams also relying on rational extensions. Nevertheless, unlike what was done in \cite{champanerkar2009twisting}, the crossing which will be extended will not be quasi-alternating. To do that, we will start with an almost alternating link diagram $D$, i.e. a diagram in which one crossing change makes it alternating. We suppose that $\mathcal{L}(D)$ is quasi-alternating. Then we consider a crossing the change of which makes the diagram alternating. Such a crossing is called a \textit{dealternator} of $D$. Note that an almost alternating diagram can have more than one dealternator. But that cannot occur when that diagram is the projection of a quasi-alternating link (Proposition 2.2, \cite{tsukamoto2004criterion}). Our first observation is that a dealternator cannot be a quasi-alternating crossing as mentionned in Corollary \ref{cor2}. However, we show that there is a rational extension of $D$ at the dealternator which generates a quasi-alternating diagram. We call such operation a \textit{dealternator extension} of $D$. We get the following Theorem which is one of our main results.

\begin{theo}
Let $D$ be an almost alternating diagram representing a quasi-alternating link. If $c$ is the dealternator of $D$, then there exists a dealternator extension of $D$ where $c$ becomes a quasi-alternating crossing.
\label{mytheo1}
\end{theo}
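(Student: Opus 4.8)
The plan is to work with the Conway normal form of the almost alternating diagram $D$ near its dealternator $c$, and to exhibit an explicit rational tangle replacement at $c$ that produces a reduced alternating diagram, after which quasi-alternateness follows from known results. First I would set up notation: since $\mathcal{L}(D)$ is quasi-alternating, by Proposition 2.2 of \cite{tsukamoto2004criterion} the dealternator $c$ is unique, and by Tsukamoto's structure theorem for almost alternating diagrams of quasi-alternating links one knows that, up to flypes, $D$ looks like an alternating diagram with a single ``wrong'' crossing at $c$; locally $c$ sits inside a twist region and changing it makes $D$ alternating. The key observation is that replacing the single crossing $c$ by the rational tangle consisting of $c$ together with one extra crossing of the \emph{opposite} sign (a clasp), i.e. the tangle $\tfrac{1}{2}$-type extension oriented so as to ``cancel'' the defect, yields a diagram $D'$ that is alternating and reduced. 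This $D'$ is the dealternator extension we want.

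The main steps, in order, would be: (1) record the local picture at $c$ and verify that the dealternator extension by an appropriately oriented two-crossing rational tangle $T$ produces an alternating diagram $D' = D^{c \leftarrow T}$ — this is the crossing-sign bookkeeping and is where Tsukamoto's normal form is essential; (2) check that $D'$ is reduced (no nugatory crossings) and connected, using that $D$ was a reduced almost alternating diagram and that the inserted tangle is itself reduced and properly glued; (3) invoke the theorem of \cite{ozsvath2005heegaard} that a connected reduced alternating diagram is quasi-alternating (non-split), so $\mathcal{L}(D')$ is quasi-alternating and moreover \emph{every} crossing of an alternating diagram can be seen to be a quasi-alternating crossing — in particular each crossing of the inserted tangle $T$, and $c$ among them, is a quasi-alternating crossing of $D'$. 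This gives the conclusion that in this dealternator extension $c$ has become a quasi-alternating crossing.

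The subtle point I want to handle carefully in step (1) is the orientation/sign of the extending tangle: extending $c$ by a tangle with a crossing of the \emph{same} sign as $c$ only lengthens the non-alternating twist region and keeps the diagram non-alternating, whereas extending by a crossing of the opposite sign first creates a reducible (nugatory) crossing unless one is careful — so the correct move is to replace $c$ by a length-two alternating rational tangle whose continued-fraction expansion is chosen so that the two new crossings are alternating with their neighbours in $D$ and the former ``dealternator defect'' is absorbed. I expect the description of exactly which rational tangle to insert, and the verification that the result is genuinely alternating and reduced rather than merely ``alternating after a Reidemeister~I move'', to be the main obstacle; everything after that is a direct appeal to Theorem~\ref{th1.1} together with the fact that alternating reduced connected diagrams are quasi-alternating at every crossing.
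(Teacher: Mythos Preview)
Your approach has a genuine gap rooted in a misreading of what a \emph{dealternator extension} is. In the paper, a rational tangle $t$ \emph{extends} the crossing $c$ only when every crossing of $t$ has the \emph{same} Conway sign as $c$; the dealternator extension $D^{c\leftarrow t}$ therefore replaces the single ``wrong'' crossing by a rational tangle all of whose crossings are still ``wrong'' relative to the surrounding alternating pattern. Consequently $D^{c\leftarrow t}$ is never an alternating diagram, and your Step~(1)---choosing $t$ so that the two inserted crossings alternate with their neighbours in $D$---cannot be carried out while keeping $c$ as one of the crossings of $t$. The option you float, of inserting a crossing of the opposite sign to $c$, is not a dealternator extension in the sense of the theorem (and, as you yourself note, tends to produce a nugatory pair rather than a genuinely new alternating crossing). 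The ``structure theorem of Tsukamoto for almost alternating diagrams of quasi-alternating links'' you invoke does not exist; Tsukamoto's Proposition~2.2 gives uniqueness of the dealternator, nothing more.

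The paper's argument is entirely different and does not aim for an alternating diagram. One writes $D=n(-1+T)$, so that $D^{c}_{0}=n(T)$ and $D^{c}_{\infty}=d(T)$ are both alternating; since $\mathcal{L}(D)$ is quasi-alternating, Proposition~\ref{prop3.1} gives $\det(D^{c}_{0})\neq\det(D^{c}_{\infty})$. If $\det(D^{c}_{0})>\det(D^{c}_{\infty})$ one extends by the tangle $-\tfrac{1}{2}$ (two crossings of the same sign as $c$). In $D^{c\leftarrow -\frac{1}{2}}$ the two smoothings at $c$ are $D^{c}_{0}$ (alternating, hence quasi-alternating) and $D$ itself (quasi-alternating by hypothesis), and the determinant identity $\det(D^{c\leftarrow -\frac{1}{2}})=2\det(D^{c}_{0})-\det(D^{c}_{\infty})=\det(D^{c}_{0})+\det(D)$ follows from Proposition~\ref{prop3.2} together with the assumed inequality. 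Thus $c$ is a quasi-alternating crossing of $D^{c\leftarrow -\frac{1}{2}}$. The case $\det(D^{c}_{0})<\det(D^{c}_{\infty})$ is symmetric, using the extension by $-2$. The essential point you are missing is that one smoothing of the extended diagram returns the \emph{original} quasi-alternating link, so the hypothesis on $\mathcal{L}(D)$ is used directly rather than being upgraded to alternateness.
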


The proof will be given in Section \ref{sect5} after a more detailed statement (Theorem \ref{th3.2}).\\

On the other hand, we show that all non-alternating quasi-alternating Montesinos links can be obtained by some dealternator extensions of rational links. In Corollary \ref{mycor4}, we check that any quasi-alternating link arising as a dealternator extension of an almost alternating diagram which represents a quasi-alternating link satisfies the following conjecture formulated by Khaled Qazaqzeh, Balqees Qublan and Abeer Jaradat in \citep{qazaqzeh2013remark} and which compares the crossing number $c(L)$ of a quasi-alternating link $L$ to its determinant $\det(L)$.

\begin{conj}
Every quasi-alternating link $L$ satisfies $c(L) \leq \det(L)$.
\label{conj1}
\end{conj}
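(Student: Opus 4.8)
Since Conjecture \ref{conj1} is open in general, my plan is not to establish it for all quasi-alternating links but to verify it for every link $L$ produced by a dealternator extension, which is the content of Corollary \ref{mycor4}. Fix an almost alternating diagram $D$ with $n$ crossings representing a quasi-alternating link, let $c$ be its dealternator, and let $D^{c\leftarrow T}$ be the dealternator extension of Theorem \ref{mytheo1}, where the rational tangle $T$ has $t$ crossings. By that theorem (together with the remark following Theorem \ref{th1.1}) every crossing of $T$ is a quasi-alternating crossing of $D^{c\leftarrow T}$, and $L=\mathcal{L}(D^{c\leftarrow T})$. Because the crossing number is a lower bound taken over all diagrams, $c(L)\le c(D^{c\leftarrow T})=(n-1)+t$, so it suffices to prove the single inequality $(n-1)+t\le\det(L)$.

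For the determinant I would resolve the $t$ quasi-alternating crossings of $T$ one at a time. At such a crossing $c_i$ the defining identity gives $\det=\det(\mathcal{L}(D^{c_i}_{0}))+\det(\mathcal{L}(D^{c_i}_{\infty}))$; I would follow the branch that shortens $T$ and keep the other, whose closure is again a quasi-alternating (hence non-split) link and therefore has determinant at least $1$. Peeling off all $t$ crossings in this way bottoms out at the link obtained by smoothing the original dealternator, and yields
\[
\det(L)\;\ge\;\det\!\bigl(\mathcal{L}(D^{c}_{0})\bigr)+t .
\]
For a twist-region extension this telescoping is immediate; for a general rational tangle the same count follows by resolving the crossings in the order dictated by its continued fraction, each resolution contributing a cap of determinant at least $1$.

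It then remains to bound the base term. Smoothing the dealternator of an almost alternating diagram produces an alternating diagram, so $\mathcal{L}(D^{c}_{0})$ is an alternating link carrying a connected diagram with $n-1$ crossings. Assuming this diagram is reduced, its crossings realise the crossing number, and the alternating case of the bound applies: the determinant equals the number of spanning trees of the (bridgeless) Tait graph, which is at least its number of edges, whence $\det(\mathcal{L}(D^{c}_{0}))\ge n-1$. Combining the two estimates closes the argument,
\[
c(L)\;\le\;c(D^{c\leftarrow T})\;=\;(n-1)+t\;\le\;\det\!\bigl(\mathcal{L}(D^{c}_{0})\bigr)+t\;\le\;\det(L),
\]
and when the construction instead starts from a connected reduced alternating tangle diagram the base link is visibly reduced alternating, so the same chain applies verbatim.

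The main obstacle I anticipate is the base estimate $\det(\mathcal{L}(D^{c}_{0}))\ge n-1$: I must rule out nugatory crossings in $D^{c}_{0}$, since a reducible smoothing would carry fewer than $n-1$ essential crossings and could violate the bound. This should follow from taking $D$ reduced together with the structural constraints that a quasi-alternating almost alternating diagram satisfies (Proposition 2.2 of \cite{tsukamoto2004criterion} and Corollary \ref{cor2}), and it is precisely the step that genuinely uses the hypothesis that $\mathcal{L}(D)$ is quasi-alternating rather than merely almost alternating. The secondary point to check is that the $t$-fold peeling really contributes one unit per crossing of $T$ for an arbitrary rational tangle, which I would formalise through the continued-fraction recursion for the determinant under rational replacement.
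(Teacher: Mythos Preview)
Your overall strategy is sound and close to the paper's, but the peeling argument contains a genuine off-by-one gap that makes the base case fail. You claim that peeling all $t$ crossings of the inserted tangle yields
\[
\det(L)\;\ge\;\det\!\bigl(\mathcal{L}(D^{c}_{0})\bigr)+t,
\]
but the very last crossing of $T$ is the dealternator $c$ itself, and by Corollary~\ref{cor2} it is \emph{not} a quasi-alternating crossing of $D$. Thus the quasi-alternating identity $\det=\det_0+\det_\infty$ is unavailable at the final step, and you can only peel $t-1$ times; for the minimal extension $T=-\tfrac{1}{2}$ (so $t=2$) you obtain after one resolution
\[
\det(L)=\det(D^{c}_{0})+\det(D)\;\ge\;\det(D^{c}_{0})+1.
\]
Combining this with your base estimate $\det(D^{c}_{0})\ge n-1$ gives only $\det(L)\ge n$, one short of the needed $c(D^{c\leftarrow -\frac{1}{2}})=n+1$. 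Nothing in your hypotheses forces $\det(D)\ge 2$ (the strict inequality $\det(D^c_0)>\det(D^c_\infty)$ only gives $\det(D)\ge 1$), so the argument as written does not close.

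The paper's Proposition~\ref{prop5.2} repairs exactly this missing unit, but by applying the alternating bound to the \emph{other} smoothing. Since $D$ is assumed dealternator reduced, $D^{c}_{\infty}$ is also reduced alternating, hence $n-1=c(D^{c}_{\infty})\le\det(D^{c}_{\infty})$. Coupled with the case hypothesis $\det(D^{c}_{0})>\det(D^{c}_{\infty})$ this yields $\det(D^{c}_{0})\ge n$, and then
\[
\det\!\bigl(D^{c\leftarrow -\frac{1}{2}}\bigr)=2\det(D^{c}_{0})-\det(D^{c}_{\infty})\ge \det(D^{c}_{0})+1\ge n+1.
\]
So the key point you are missing is that the strict determinant inequality between the two smoothings, together with the alternating bound on $D^{c}_{\infty}$, upgrades $\det(D^{c}_{0})\ge n-1$ to $\det(D^{c}_{0})\ge n$. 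Once the base case $t=2$ is secured in this way, the paper handles an arbitrary further rational extension not by your crossing-by-crossing peeling but by invoking Theorem~2.3 of \cite{qazaqzeh2013remark}, which says that rationally extending a quasi-alternating crossing preserves the inequality $c\le\det$. Your telescoping idea could be made to work for that step as well, but you should be aware that the increment per resolution is $\det$ of the capped branch, not merely $1$, and that the recursion terminates at $D$ (after $t-1$ steps), not at $D^{c}_{0}$.
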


The paper is organized as follows. In the second section we recall the main tools needed to prove our results, mainly graphs and tangles. We give a brief overview on the link families which we will deal with. We recall some of their properties. In the third section, we show Theorem \ref{mytheo1} where the technique of dealternator extension is introduced and show some relative results. In the fourth section, we give some applications of Theorem \ref{mytheo1}. We describe a method of generating links which are both almost alternating and quasi-alternating. Then we show how that process allows to construct all non-alternating quasi-alternating Montesinos links. The fifth section is devoted to the consolidation of Conjecture \ref{conj1}. In the last section we answer the question asked by K. Qazaqzeh, N. Chbili and B. Qublan at the end of \cite{qazaqzeh2015characterization}.

\section{Preliminaries}
\subsection{Graphs}
To prove some of our results, we will need some graph-theoretical machinery which will be found in \citep{thistlethwaite1987spanning}.\\
For any connected link diagram $D$, we can associate a connected graph $G(D)$, called the Tait graph of $D$ by checkerboard coloring complementary regions assigning a vertex to every shaded region, an edge to every crossing and a $\pm$ sign to every edge according to the convention in Fig. \ref{fig1}. Note that there are two choices for the checkerboard coloring which give dual graphs.

\begin{figure}[H]
\centering
\includegraphics[width=0.15\linewidth]{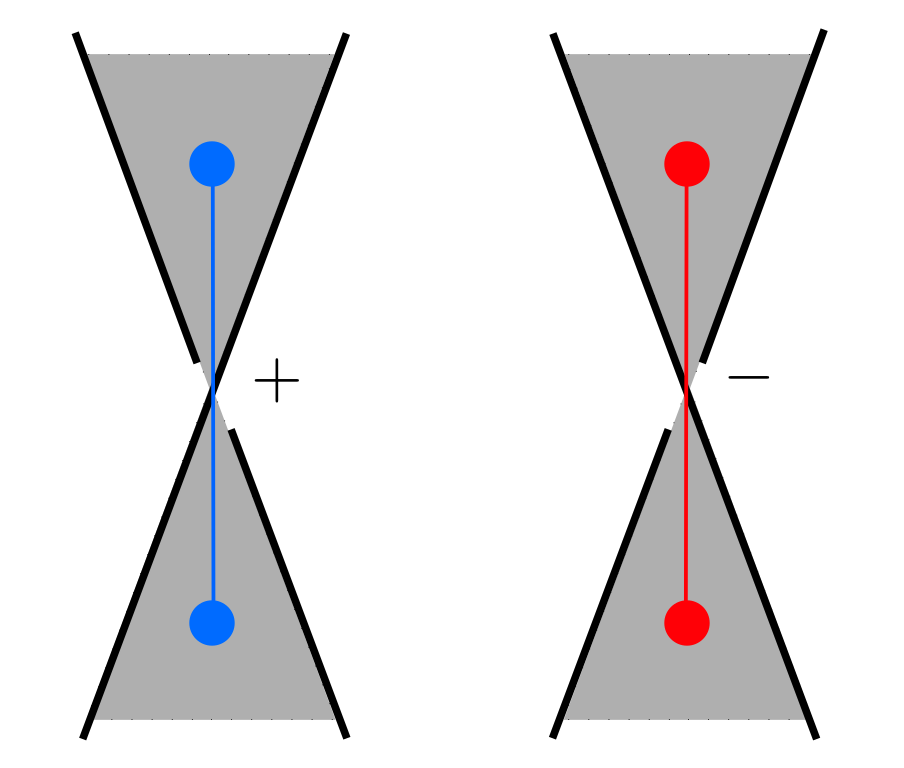}  
\caption{The sign convention for the Tait graph of a link diagram.} 
\label{fig1}
\end{figure}

\begin{figure}[H]
\centering
\includegraphics[width=0.2\linewidth]{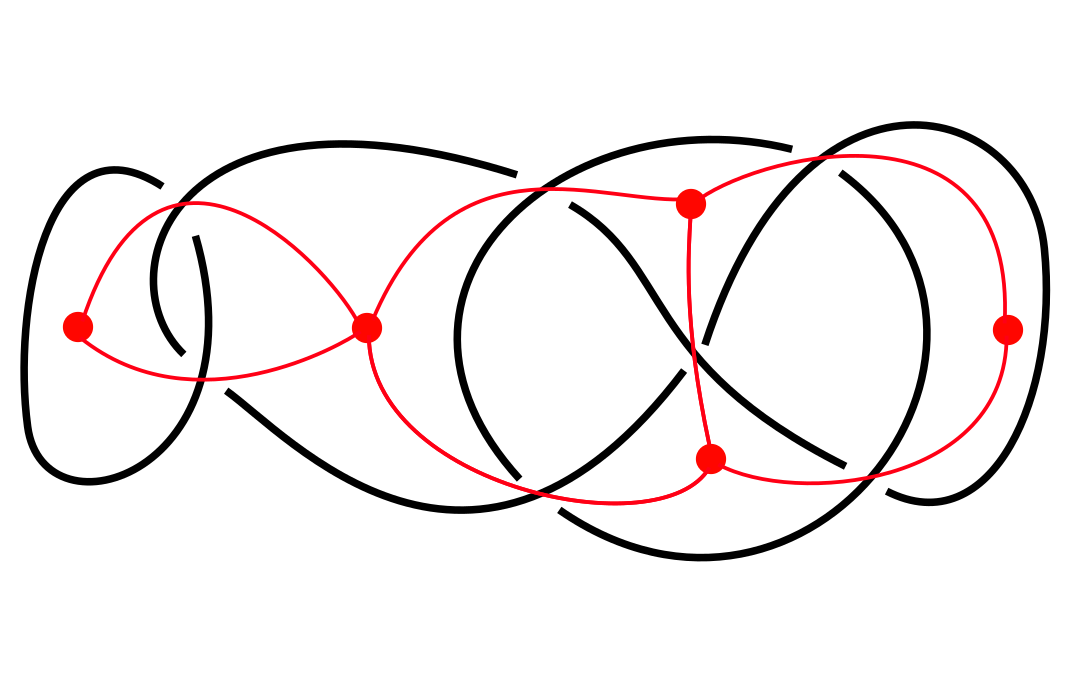}  
\caption{A Tait graph with only negative edges.} 
\label{nfig3}
\end{figure}

\begin{figure}[H]
\centering
\includegraphics[width=0.3\linewidth]{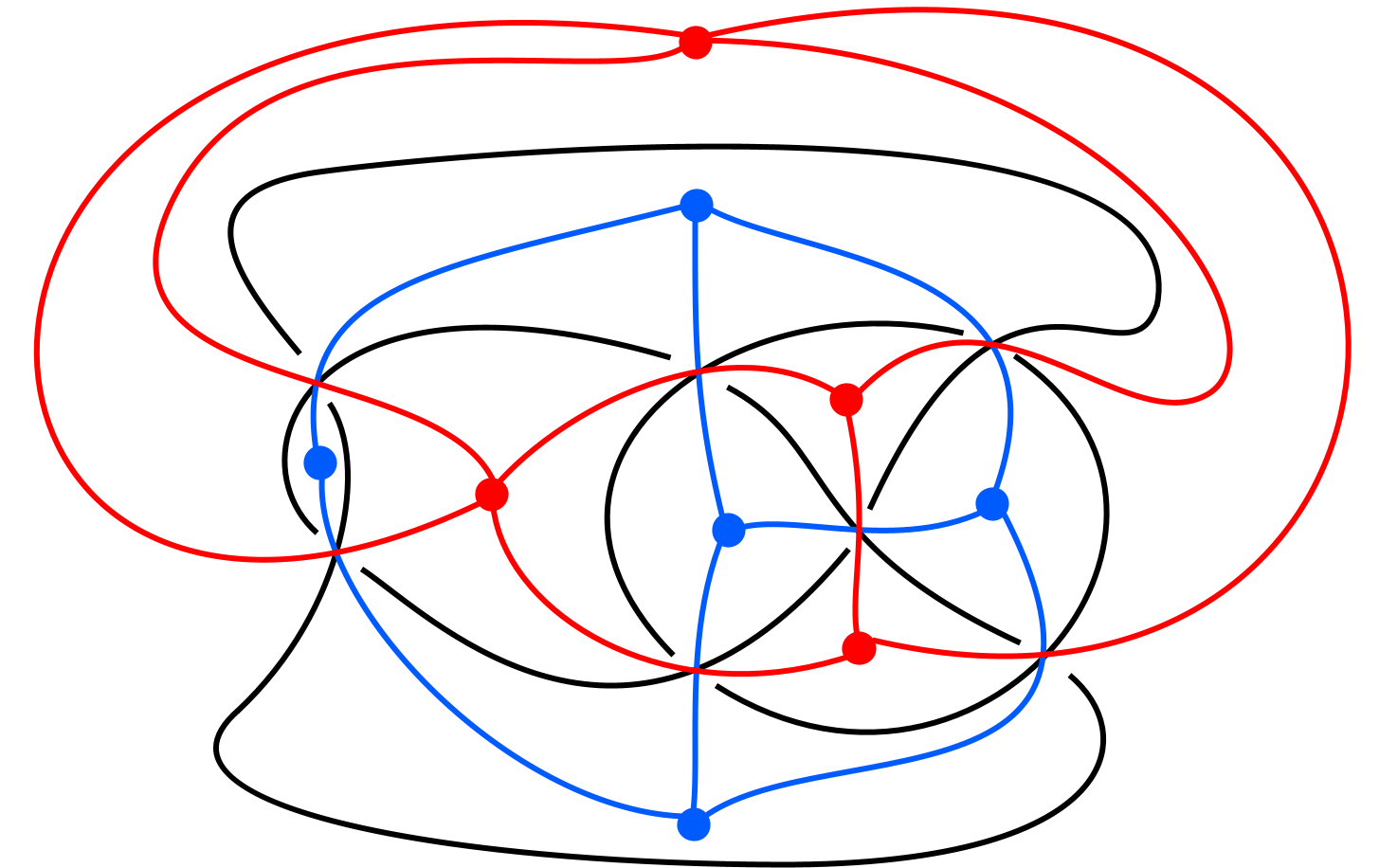}  
\caption{A Tait graph with only positive edges (in blue) and its dual (in red).}
\label{nfig4} 
\end{figure}

Since an edge and its dual have opposite signs, we will always choose the Tait graphs which have more positive edges than negative ones. Note that the edges of a Tait graph of an alternating link diagram are all of the same sign.\\

Graphs allow to get some link invariants like the determinant. A. Champanerkar and I. Kofman showed the following lemma \citep{champanerkar2009twisting}.
\begin{lem}\label{th2.1} For any spanning tree $T$ of $G(D)$, let $v(T)$ be the number of positive edges in $T$.\\
  Let $s_k(D) = \# \lbrace \text{ spanning trees}\ $T$\ of\ G(D)\ |\ v(T)=k\rbrace$. Then
$$\det(D) = \left| \displaystyle \sum_k (-1)^k s_k(D) \right|.$$
\end{lem}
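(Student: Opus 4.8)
The plan is to recover this as a specialization of the signed spanning-tree expansion of the Kauffman bracket (equivalently, of the Tutte polynomial of the signed Tait graph), which is exactly the framework developed in \citep{thistlethwaite1987spanning}. First I would recall that for a connected link diagram $D$ the determinant satisfies $\det(D)=|\Delta_D(-1)|$ where $\Delta_D$ is the Alexander polynomial, and more conveniently $\det(D)=|\langle D\rangle_{A}|$ evaluated at the substitution that turns the Kauffman bracket into the determinant; concretely, $\det(D) = |V_D(-1)|$ for the Jones polynomial, or one works directly with the Goeritz matrix, whose determinant (up to sign) is $\det(D)$ and whose entries are read off the checkerboard coloring that defines $G(D)$.

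The main step is the spanning-tree expansion. Assign to each crossing, via the Tait sign convention of Fig. \ref{fig1}, the weight coming from its contribution to the state sum: a positive edge contributes one monomial and a negative edge the ``opposite'' one. Thistlethwaite's activities argument (or, more elementarily, repeated application of the deletion–contraction recursion on $G(D)$, which on the link side is the skein/smoothing recursion at the corresponding crossing) expresses the relevant polynomial invariant of $D$ as a sum over spanning trees $T$ of $G(D)$, each tree contributing $\pm$ a monomial whose exponent records $v(T)$, the number of positive edges of $T$. I would carry this out by induction on the number of edges of $G(D)$: pick an edge $e$, split $G(D)$ into $G(D)/e$ and $G(D)\setminus e$, match this to the two smoothings $D^c_0$ and $D^c_\infty$ at the crossing $c$ corresponding to $e$, note that a spanning tree of $G(D)$ either contains $e$ (giving a spanning tree of $G(D)/e$, with $v$ unchanged or dropped by one according to the sign of $e$) or avoids it (giving a spanning tree of $G(D)\setminus e$), and check the sign bookkeeping $(-1)^k$ is consistent with how $\det$ behaves under smoothing. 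Evaluating the resulting state sum at the determinant-specialization collapses every monomial to $\pm 1$ with sign $(-1)^{v(T)}$, and taking absolute value (to kill the overall units/framing ambiguity) yields $\det(D)=\bigl|\sum_k(-1)^k s_k(D)\bigr|$.

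The main obstacle is the sign bookkeeping: one has to be careful that the sign attached to each spanning tree in the expansion is genuinely $(-1)^{v(T)}$ and not, say, $(-1)^{v(T)}$ times a global sign depending on $D$ (number of negative edges, writhe, etc.). This is precisely why the statement is phrased with an absolute value on the outside — it absorbs exactly that global unit. So the cleanest route is to prove the identity up to an overall sign $\pm 1$ (which the induction handles painlessly, since at each deletion–contraction step both sides pick up the same unit) and then invoke $\det(D)\ge 0$ to pin down the absolute value. I would also remark that the normalization ``choose the Tait graph with more positive than negative edges'' is irrelevant here: passing to the dual graph replaces $v(T)$ by $(|V|-1)-v(T)$ on the dual tree, which only flips the global sign, again harmless under $|\cdot|$. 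For full rigor one should cite the precise statement in \citep{thistlethwaite1987spanning} that the spanning-tree expansion of the Kauffman bracket has these signs, and simply record the determinant-specialization as the content of the lemma.
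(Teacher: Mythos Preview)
The paper does not prove this lemma: it is stated with attribution to Champanerkar and Kofman \citep{champanerkar2009twisting} and used as a black box. So there is no ``paper's own proof'' to compare against.

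Your plan is the standard one and is correct. The spanning-tree expansion of the Kauffman bracket from \citep{thistlethwaite1987spanning}, specialized at $A=e^{i\pi/4}$ (equivalently $V_L(-1)$), collapses each tree's monomial to a sign $(-1)^{v(T)}$ up to a global unit, and the absolute value removes that unit. Your deletion--contraction induction on $G(D)$, matching $G(D)/e$ and $G(D)\setminus e$ to the two smoothings at the corresponding crossing, is exactly how one establishes this expansion; this is also essentially what Champanerkar and Kofman do. One small caution: when you write ``check the sign bookkeeping $(-1)^k$ is consistent with how $\det$ behaves under smoothing,'' be careful not to phrase the induction as a recursion for $\det$ itself, since $\det$ does not satisfy a clean smoothing relation in general. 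The induction should be carried out for the bracket (or a signed spanning-tree polynomial) where the skein relation is exact, and only afterwards do you take absolute value to obtain $\det(D)$. With that adjustment your argument goes through.
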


\begin{rmq}
In particular, the determinant of an alternating link is the number of the spanning trees in a Tait graph of any alternating diagram of that link.
\end{rmq}

\subsection{Tangles}
In this paper, we call a \textit{tangle} $T$ any proper embedding of two disjoint arcs and a (possibly empty) set of loops in a $3$-ball $B^3$. Two tangles $T$ and $T^\prime$ are \textit{equivalent} if there is an ambient isotopy of $B^3$ which is the identity on the boundary and which takes $T$ to $T^\prime$.\\
We assume that the four endpoints lie in the great circle of the boundary sphere of $B^3$ which joins the two poles. That great circle bounds a two disk $B^2$ in $B^3$. We consider a regular projection of $B^3$ on $B^2$. The image of a tangle $T$ by that projection in which the height information is added at each of the double points is called a \textit{tangle diagram} of $T$. Two tangle diagrams will be equivalent if they are related by a finite sequence of planar isotopies and Reidemeister moves in the interior of the \textit{projection disk} $B^2$. Two tangles will be \textit{equivalent} iff they have equivalent diagrams.\\
Depending on the context we will denote by $T$ the tangle or its projection.

The four endpoints of the arcs in the diagram are usually labeled $NW_T$,$NE_T$,$SE_T$, and $SW_T$ with symbols referring to the compass directions as in the Fig. \ref{fig2}.

\begin{figure}[H]
\centering
\includegraphics[width=0.2\linewidth]{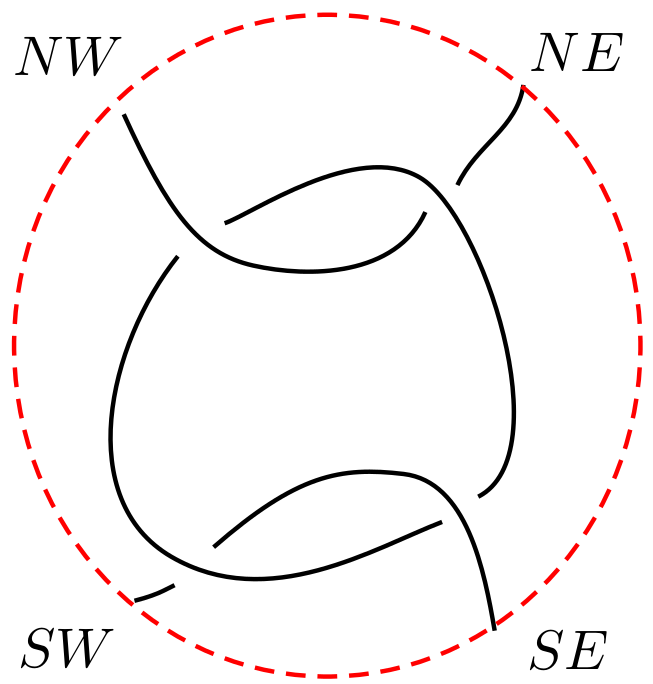}   
\caption{An alternating tangle diagram $T$.}
\label{fig2}
\end{figure}

A tangle diagram $T$ is said to be \textit{disconnected} if either there exists a simple closed curve embedded in the projection disk, called a \textit{splitting loop}, which do not meet $T$, but encircles a part of it, or there exists a simple arc properly embedded in the projection disk, called a \textit{splitting arc}, which do not meet $T$ and splits the projection disk into two disks each one containing a part of $T$. A tangle diagram is \textit{connected} if it is not disconnected.

A tangle diagram is \textit{reduced} if its number of crossings cannot be reduced by any tangle equivalence.

A tangle diagram $T$ is said to be \textit{locally knotted} if there exists a simple closed curve $C$ embedded in the interior of the projection disk, called a \textit{factorizing circle} of $T$, which meets $T$ transversally at two points and bounds a disk inside the projection disk which meets $T$ in a knotted arc.

We adopt the notations used for rational tangles by L. J ay R. Goldman and H. Kauffman in \cite{goldman1997rational} and L. H. Kauffman and S. Lambroupoulou in \cite{kauffman2004classification}. In Fig. \ref{fig4}, we recall some operations defined on tangles.

\begin{figure}[H]
\centering
\includegraphics[width=0.5\linewidth]{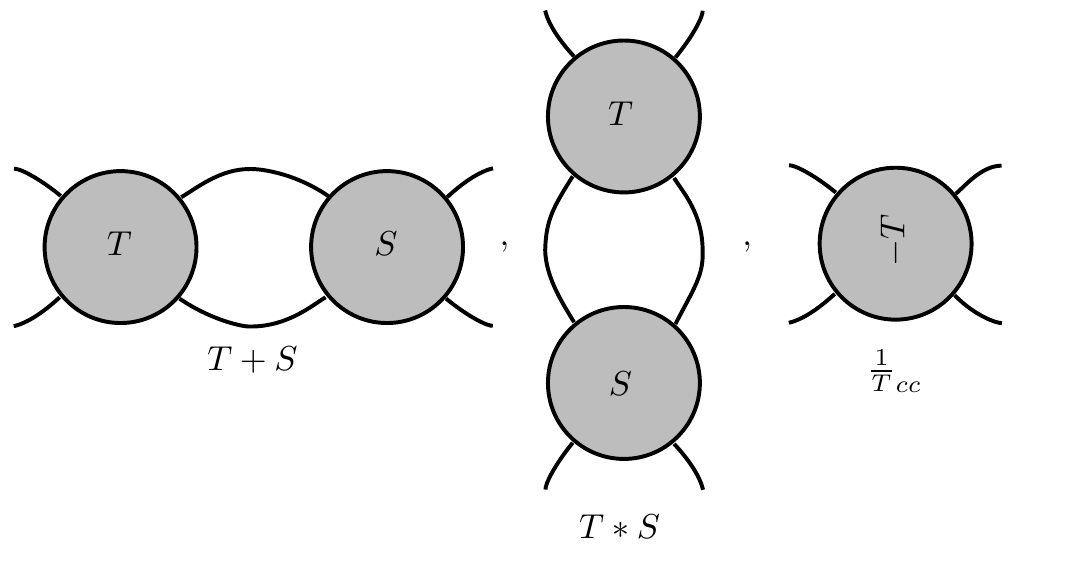}   
\caption{Some operations on tangle diagrams.}
\label{fig4}
\end{figure}

A ($-\pi$) rotation of a tangle diagram $T$ in the horizontal axis is called \textit{horizontal flip} and will be denoted by $T_h$. That is the tangle diagram obtained by rotating the ball containing $T$ in space around the horizontal axis as shown on the left in Fig. \ref{fig4bis} and then project the new tangle by the same projection function as that used to get $T$. Note that if $T$ is an alternating tangle diagram, then $T_h$ is also alternating. Note that the flip operation preseves the isotopy class of a rational tangle (Flip Theorem 1 \cite{goldman1997rational}).

\begin{figure}[H]
\centering
\includegraphics[width=0.7\linewidth]{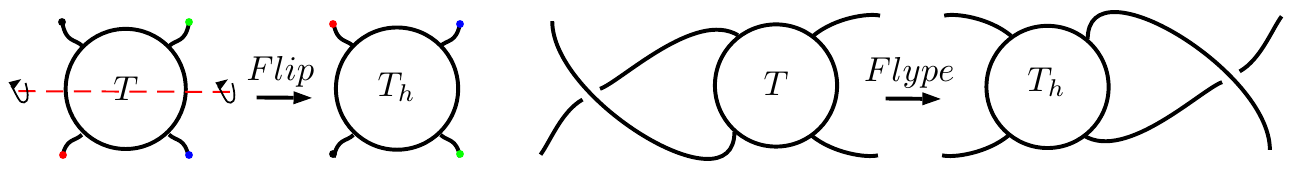}
\caption{Flip and flype operations}
\label{fig4bis}
\end{figure}

A \textit{flype} is an isotopy of tangles that is depicted on the right in the Fig. \ref{fig4bis}.
 
A tangle diagram $T$ provides two link diagrams: the \textit{Numerator} of $T$, denoted by $n(T)$, which is obtained by joining with simple arcs the two upper endpoints $(NW_T,NE_T)$ and the two lower endpoints $(SW_T,SE_T)$ of $T$, and the \textit{Denominator} of $T$, denoted by $d(T)$, which is obtained by joining with simple arcs each pair of the corresponding top and bottom endpoints $(NW_T,SW_T)$ and $(NE_T,SE_T)$ of $T$ (see Fig. \ref{fig5}). We denote $N(T)$ and $D(T)$ respectively the corresponding links. 

\begin{figure}[H]
\centering
\includegraphics[width=0.3\linewidth]{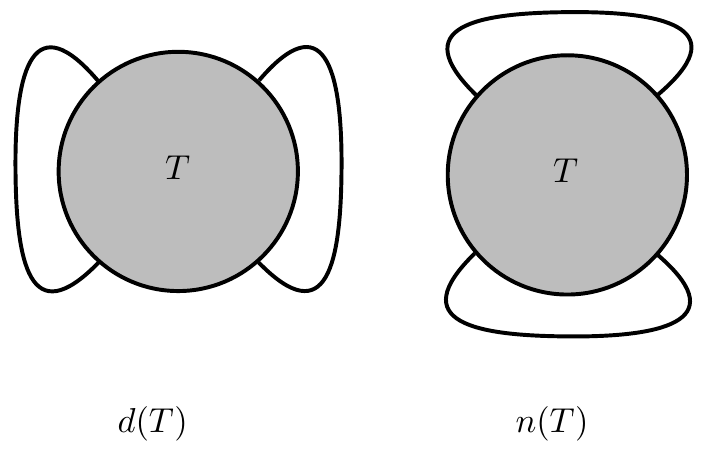}   
\caption{The denominator and the numerator of a tangle diagram $T$.}
\label{fig5}
\end{figure}

As in the case of link diagrams, one can associate to each tangle diagram $T$ a signed planar graph by choosing a checkerboard coloring of $T$. This graph will also have a dual graph. The signed planar graph of $T$ may be put inside the projection disk such that two of its vertices are evenly spaced on the boundary circle and which we call \textit{boundary vertices}. We denote it by $G_d(T)$ when its boundary vertices happen to be on the lateral sides of the boundary circle. If one boundary is on the upper arc and the other on the lower arc of the boundary circle, we will denote the graph by $G_n(T)$. Note that $G_n(T)$ and $G_d(T)$ are respectively Tait graphs of $n(T)$ and $d(T)$ (see Fig. \ref{fig6}).

\begin{figure}[H]
\centering
\includegraphics[width=0.4\linewidth]{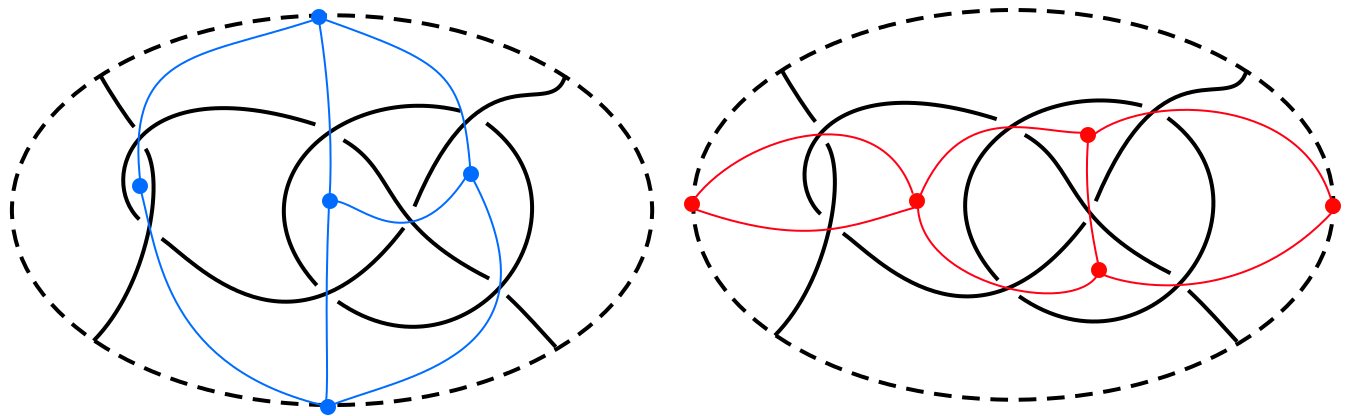}   
\caption{The graph $G_n(T)$ on the left and $G_d(T)$ on the right.}
\label{fig6}
\end{figure}

If $T$ is a connected tangle diagram, then $G_n(T)$ and $G_d(T)$ are both connected graphs (see \cite{kanenobu2003tangles}).

Denote by $u$ and $v$ the boundary vertices of $G_d(T)$. We join $u$ to $v$ by a simple arc in the exterior of the projection disk. If we coalesce $u$ and $v$ by contracting that arc, then we get the dual of $G_n(T)$ which is also a Tait graph of $n(T)$ (compare the three graphs in Fig. \ref{nfig3} and Fig. \ref{nfig4}). 

A tangle diagram $T$ is called \textit{alternating} if the ``over'' or ``under'' nature of the crossings alternates as one moves along any arc of $T$. A tangle is said to be \textit{alternating} if it admits an alternating diagram. A connected tangle diagram $T$ is alternating iff all the edges of $G_d(T)$ are of the same sign.\\

We will need the following lemma.

\begin{lem}
If $T$ is an alternating connected locally unknotted tangle diagram, then $n(T)$ or $d(T)$ is prime.
\label{lem2.2}
\end{lem}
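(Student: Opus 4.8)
The plan is to argue by connecting the combinatorics of the Tait graphs $G_n(T)$ and $G_d(T)$ to primeness of the resulting link diagrams. Since $T$ is alternating and connected, both $n(T)$ and $d(T)$ are alternating connected link diagrams. By the Tait/Menasco--Thistlethwaite theory of alternating links, a reduced alternating diagram represents a prime link if and only if it is a prime diagram (no connect-sum decomposition visible in the diagram), and for an alternating diagram primeness of the diagram corresponds exactly to $2$-connectivity of its Tait graph (the Tait graph has no cut vertex). So the statement to prove reduces to: if $T$ is alternating, connected, and locally unknotted, then at least one of $G_n(T)$, $G_d(T)$ is $2$-connected.

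First I would set up the dictionary carefully: recall from the excerpt that for a connected tangle diagram $T$ both $G_n(T)$ and $G_d(T)$ are connected, that they are Tait graphs of $n(T)$ and $d(T)$ respectively, and that coalescing the two boundary vertices $u,v$ of $G_d(T)$ yields the dual of $G_n(T)$. Dually, $G_n(T)$ with its two boundary vertices identified gives (a graph whose dual is) $G_d(T)$; more usefully, $G_d(T)$ is obtained from $G_n(T)$ by a ``planar duality relative to the two boundary vertices'' — I would phrase this via the standard fact that $G_n(T)$ and $G_d(T)$ are related as a graph and its ``dual with respect to a pair of marked vertices'' (sometimes called the partial dual or the $*$-operation on the outer face). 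The key combinatorial input is then: for a connected plane graph with two marked vertices on the outer face, either the graph is $2$-connected, or its marked-vertex-dual is $2$-connected, unless the failure of $2$-connectivity is ``essential'' in a way that forces a local knot.

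Next I would translate a cut vertex in $G_d(T)$ into geometric data on the tangle. A cut vertex $w$ of $G_d(T)$ that separates the two boundary vertices $u$ and $v$ corresponds, in the tangle diagram, to a splitting arc or to a point where the diagram decomposes — but $T$ is connected, so this case does not arise, or it forces the decomposition to live on one side only. A cut vertex $w$ that does \emph{not} separate $u$ from $v$ corresponds to a factorizing circle of $T$: it exhibits $n(T)$ (or $d(T)$) as a connect sum, and the summand cut off is an alternating closed-up piece of the tangle. If that piece is nontrivial as a knotted arc we would contradict local unknottedness; if it is trivial it can be absorbed (the diagram was not reduced, or the cut vertex was spurious), which is where the hypotheses genuinely get used. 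Running this analysis on both $G_n(T)$ and $G_d(T)$ simultaneously, and using the duality relation between them, I would conclude that they cannot both have a ``bad'' cut vertex: a cut vertex of $G_n(T)$ separating its boundary vertices becomes a non-separating cut vertex of $G_d(T)$ and vice versa, so the local-knottedness obstruction would be triggered on at least one side. Hence one of $n(T)$, $d(T)$ is prime.

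The main obstacle I expect is the bookkeeping in the last step: precisely matching ``cut vertex of $G_n(T)$ separating $\{u,v\}$'' with ``cut vertex of $G_d(T)$ not separating $\{u,v\}$'' under the marked-vertex duality, and ruling out the degenerate possibilities (parallel edges, nugatory crossings, the tangle being a single strand with local knotting) using exactly the hypotheses ``connected'' and ``locally unknotted.'' I would handle this by a careful case analysis on the block–cut tree of $G_d(T)$ relative to the path between its boundary vertices, observing that blocks hanging off that path correspond to connect-sum factors of $d(T)$ that are themselves numerators/denominators of alternating sub-tangles, and invoking Lemma \ref{th2.1} only indirectly (the real content is Menasco--Thistlethwaite primeness for alternating diagrams plus $2$-connectivity of Tait graphs). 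If a cleaner route is available, it would be to induct on the number of crossings of $T$, peeling off an outermost twist region and checking that primeness of numerator or denominator is preserved; but the graph-theoretic argument above seems more robust.
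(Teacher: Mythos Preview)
Your overall strategy matches the paper's: reduce primeness of $n(T)$ and $d(T)$ to $2$-connectivity of the Tait graphs $G_n(T)$, $G_d(T)$, and exploit that coalescing the two boundary vertices $u,v$ of $G_d(T)$ yields the planar dual of $G_n(T)$. The paper's execution is direct rather than via a block--cut tree: assuming $d(T)$ is not prime, it observes that the factorizing curve must (by local unknottedness of $T$) enclose a denominator closure arc, hence gives a cut vertex $a$ of $G_d(T)$ separating $u$ from $v$; it then verifies non-separability of the coalesced graph by exhibiting, via Whitney's classical cycle criterion, a cycle through any pair of vertices. Since that coalesced graph is the dual of $G_n(T)$, the latter is non-separable and $n(T)$ is prime. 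The symmetric case is handled by $n(T)=d(\frac{1}{T}_{cc})$. Your block--cut tree formulation would reach the same endpoint and is a legitimate repackaging.

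However, your geometric dictionary is inverted, and this is the one place the argument would actually go wrong as written. A cut vertex of $G_d(T)$ that \emph{separates} $u$ and $v$ does not correspond to a splitting arc of $T$; it corresponds precisely to a factorizing curve of $d(T)$ that passes through a closure arc --- this is the case that genuinely arises when $d(T)$ is composite and is the one you must analyze, not dismiss. Conversely, the \emph{non}-separating cut vertices are the ones corresponding to factorizing circles lying entirely inside the tangle disk, i.e.\ to local knots, and these are what local unknottedness excludes. Also, the claim that ``a separating cut vertex of $G_n(T)$ becomes a non-separating cut vertex of $G_d(T)$'' is not how planar duality behaves and is not what is needed; the argument is asymmetric (assume one closure composite, deduce the other prime) rather than a simultaneous comparison. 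Finally, the Menasco--Thistlethwaite layer is unnecessary here: the paper works entirely at the level of prime \emph{diagrams} via non-separability of the Tait graph, without invoking primeness of the underlying link.
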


\begin{proof}
  Let us assume that $d(T)$ is not prime. Then there exists a closed simple curve $C$ in the plane meeting $d(T)$ transversely in two points $x$ and $y$, and which factorizes $d(T)$. The curve $C$ bounds a disk $\Delta$ in the plane. It is easy to see that the only possible case for $x$ and $y$ is that they are both located inside the projection disk $B^2$.\\
Consider the connected graph $G_d(T)$ introduced above with its boundary vertices $u$ and $v$. We can assume that $G_d(T)$ meets $C$ in only one cut vertex $a$. Furthermore $\Delta$ contains a single denominator closure arc, then each connected component of $G_d(T) \setminus a$ contains a boundary vertex. 
Denote by $H_u$ and $H_v$ the connected components of $G_d(T) \setminus a$ containing respectively $u$ and $v$. Let $o$ and $o'$ be two vertices of $G_d(T)$ contained respectively in $H_u$ and $H_v$ and distinct from $a$. By connectedness of $G_d(T)$ and Theorem 6 in \cite{whitney2009non}, there exists a chain $c$ from $u$ to $v$, passing through $o$, $a$, and $o'$. When coalescing $u$ and $v$, the chain $c$ becomes a cycle $c'$ of the dual of $G_n(T)$ containing $u$, $v$, $o$, $a$, and $o'$. Note that any cycle of $G_d(T)$ is also a cycle of the dual of $G_n(T)$ by coalescing $u$ and $v$. Then $o$ and $o'$ are contained in the cycle $c'$ of the dual of $G_n(T)$. If $o$ and $o'$ are both in the same non-separable component of $G_d(T)$, it is easy to find a cycle of the dual of $G_n(T)$ containing them both. So, this last graph is non-separable by Theorem 7 in \cite{whitney2009non}. Hence, the graph $G_n(T)$ is also non-separable . Since, as stated in \citep{thistlethwaite1987spanning}, non-separable graphs correspond to prime link diagrams, then, $n(T)$ is prime.

Since $n(T) = d(\frac{1}{T}_{cc})$, then the above argument shows that when $n(T)$ is composite, $d(T)$ is prime.
\end{proof}
Let $T$ be an alternating connected tangle diagram. Consider the arc of $T$ which have $NW_T$ as an endpoint. Suppose that when we move along that arc starting at $NW_T$ we pass below at the first encountered crossing. Then all edges of $G_n(T)$ will be positive, all edges of $G_d(T)$ will be negative, the arc of $T$ which ends at the point $SE_T$ will also pass below at the last encountered crossing before reaching $SE_T$ and the arc of $T$ which starts at $NE_T$ will pass over at the first encountered crossing. It is easy to see that the arcs of $T$ coming from diametrically opposite endpoints both pass over or below at the first encountered crossing. That remark enables us to distinguish two types of alternating connected tangle diagrams which we call \textit{type 1} tangles and \textit{type 2} tangles as shown in Fig. \ref{fig7}. Throughout the rest of this paper, all the considered alternating tangle diagrams will be assumed to be of type 1 unless otherwise stated. 

\begin{figure}[H]
\centering
\includegraphics[width=0.4\linewidth]{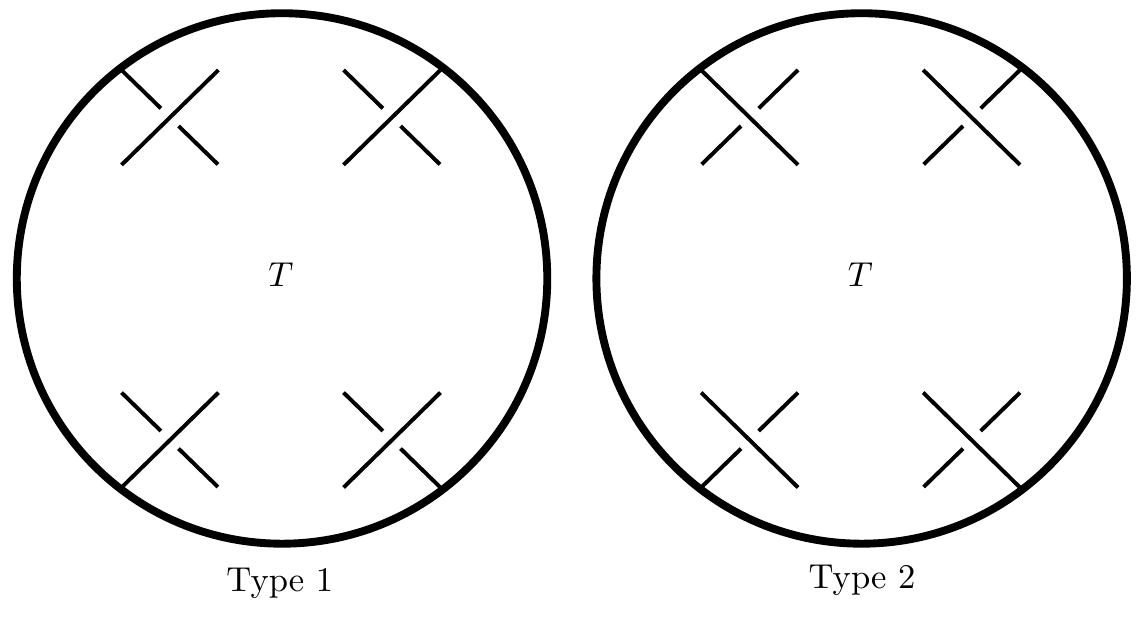}   
\caption{Type 1 and Type 2 alternating connected tangle diagrams.}
\label{fig7}
\end{figure}

Let $T$ be an alternating tangle diagram in $B^2$. Then $n(T)$ and $d(T)$ are both alternating diagrams. If $n(T)$ and $d(T)$ are also connected and reduced, then $T$ is said to be \textit{strongly alternating}.

\subsection{Rational tangles}

A \textit{rational tangle} $t$ is a tangle in $B^3$ such that the pair $(B^3, t )$ is homeomorphic to $(B^2 \times [0,1] , \left\lbrace x,y \right\rbrace \times [0,1]) $, where $x$ and $y$ are points in the interior of $B^2$. The elementary rational tangle diagrams $0$, $\pm1$, $\infty$ are shown in Fig. \ref{fig8}.
\begin{figure}[H]
\centering
\includegraphics[width=0.15\linewidth]{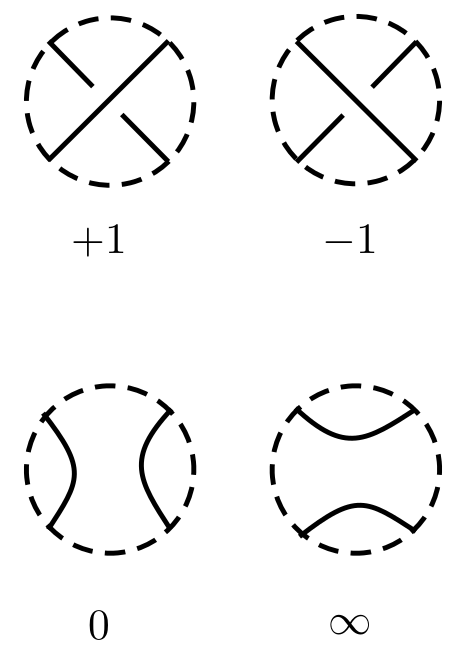}  
\caption{Elementary rational tangles.}
\label{fig8}
\end{figure}
The sum of $n$ copies of the tangle diagram $1$ or of $n$ copies of the tangle $-1$ are respectively the \textit{integral tangle diagrams} denoted also by $n$ and $-n$.
If $t$ is a rational tangle diagram then $\frac{1}{t}_c$ and $\frac{1}{t}_{cc}$ are equivalent and both represent the \textit{inversion} of $t$ denoted by $\frac{1}{t}$.

 Let $t$ be a rational tangle diagram and $p,q \in \mathbb{Z}$, we have the following equivalences:
$$  p+t+q = t+p+q \text{  ,  } \frac{1}{p} * t * \frac{1}{q} = t * \frac{1}{p+q}.$$
$$ t * \frac{1}{p} = \frac{1}{p+\frac{1}{t}} \text{  ,  } \frac{1}{p} * t = \frac{1}{\frac{1}{t}+p}. $$

Using the above notations and equivalences one can naturally associate to any continued fraction
$$\displaystyle a_1 + \frac{1}{ a_2  + \frac{1}{  \ddots +\frac{1}{ a_{n-1}  + \frac{1}{ a_n } }}},\,\ a_i\in\mathbb{Z},$$ a tangle diagram as shown in Fig. \ref{fig9} denoted by $\left[ a_1,...,a_{n} \right]$. 

\begin{figure}[!htbp]
\centering
\includegraphics[width=0.8\linewidth]{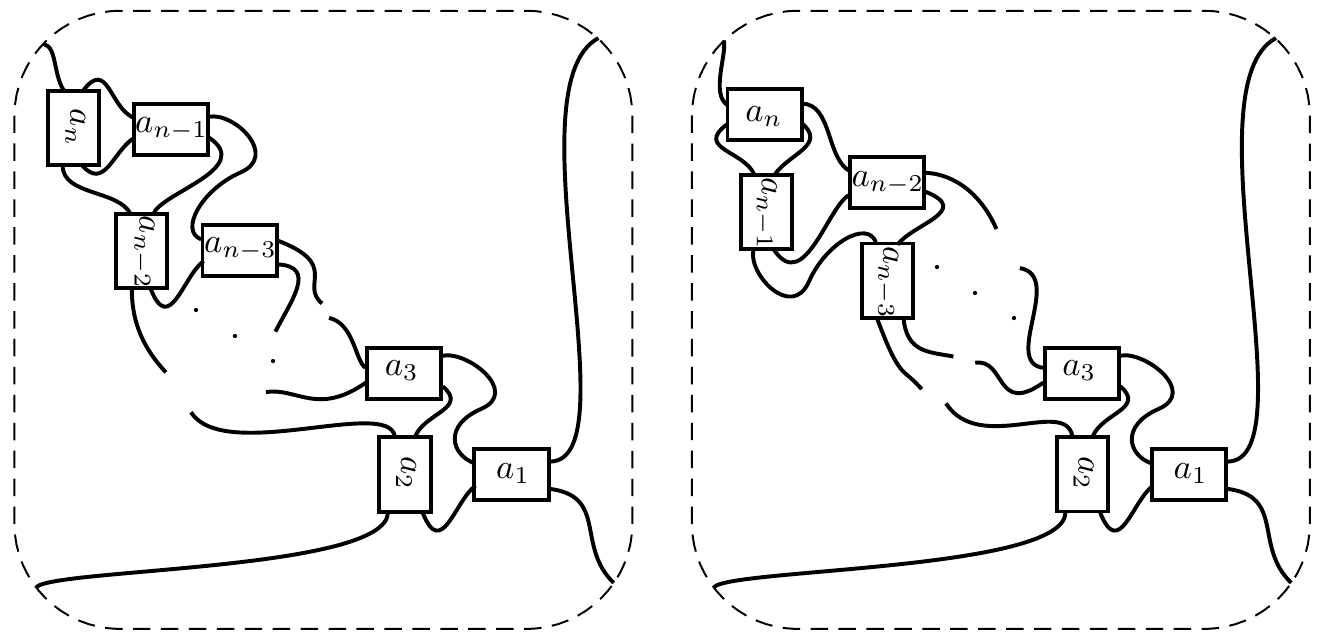}  
\caption{The standard rational tangle diagram $\left[ a_1,...,a_{n} \right]$ according to $n$ is even (left) or odd (right).}
\label{fig9}
\end{figure}

Conversely, it is known that for any rational tangle $t$, there exists an integer $n \geq 1$ and integers $ a_1 \in \mathbb{Z}$, $a_2, ... , a_n \in \mathbb{Z}\setminus\lbrace 0 \rbrace,$ all of the same sign, such that  $t= \left[ a_1,...,a_{n} \right]$. Then $t$ corresponds to a continued fraction and then to a rational number called the fraction of the tangle.

J. H. Conway showed in \cite{conway1970enumeration} that two rational tangles are equivalent if and only if they have the same fraction. Then any rational tangle $t$ can be represented by a continued fraction $\left[ a_1  , ... ,  a_n \right] = \frac{a}{b}$ where $a$ and $b$ are two coprime integers. 

The \textit{standard diagram} of a rational tangle $t$ will be the alternating connected reduced locally unknotted diagram naturally associated to the continued fraction of $t$ described above. In what follows a rational tangle diagram will mean the standard one.

\subsection{Rational extensions}
Let $c$ be a crossing of some link diagram. It can be considered as a tangle with marked end points. By using Conway's notation for rational tangles, we assign to $c$ the number $\varepsilon(c) \in \left\lbrace -1,1 \right\rbrace$ according to whether the overstrand has negative or positive slope as depicted in the figure below. 

\begin{figure}[H]
\centering
\includegraphics[width=0.3\linewidth]{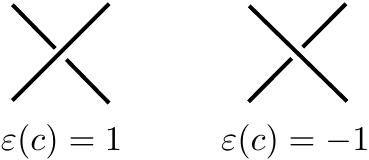}  
\end{figure}

We recall the technic of rational extension used by A. Champanerkar and I. Kofman in \cite{champanerkar2009twisting}. We say that a rational tangle $t=\left[ a_1, ... , a_n \right] $ \textit{extends} the crossing $c$ if $t$ contains $c$ and for all $i$, $a_i\cdot \varepsilon(c) \geq 1$.
That means that all crossings of $t$ have the same Conway sign as that of $c$. 
One can always replace a crossing $c$ in some link diagram $D$ with a rational tangle $t$ which extends it to get a new link diagram which we will denote by $D^{c \leftarrow t}$. This diagrammatic operation is called a \textit{rational extension} of $c$ with $t$. We will say that $c$ is \textit{the extended crossing}, and $t$ is an \textit{extension tangle} of $c$. The link diagram $D^{c\leftarrow t}$ will be called a rational extension of $D$ at $c$.

A rational extension with fraction $\frac{13}{4}=3+\frac{1}{4}$ of the diagram of the Hopf link is depicted in the Fig. \ref{fig10}.

\begin{figure}[H]
\centering
\includegraphics[width=0.8\linewidth]{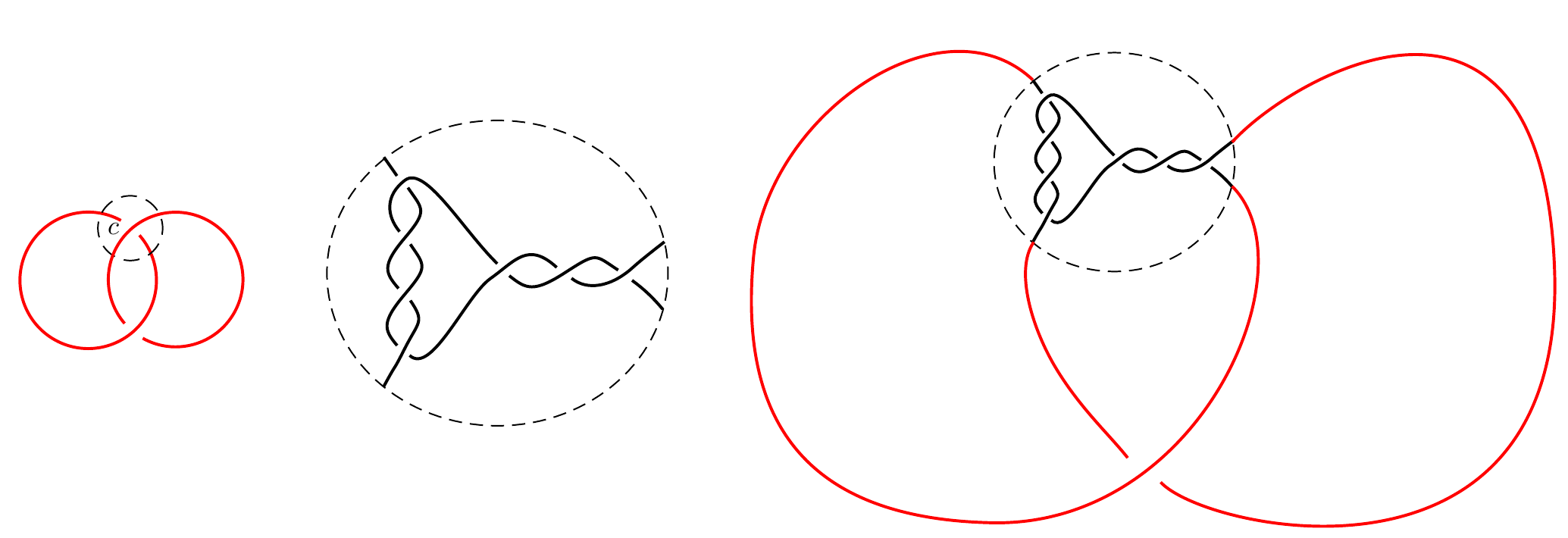}  
\caption{From the left: Hopf link, rational tangle $[3,4]=\frac{13}{4}$ and rational extension $D^{c\leftarrow \frac{13}{4}}$. }
\label{fig10}
\end{figure}

\subsection{Almost alternating links}
A link diagram $D$ is said to be \textit{almost alternating} if one crossing change makes it alternating. A crossing whose change yields an alternating diagram is called a \textit{dealternator}. A link $L$ is said to be almost alternating if it is not alternating and has an almost alternating diagram. Recall that the category of almost alternating links was first introduced by Adams, Brock, Bugbee, Comar, Huston, and Jose in \cite{adams1992almost}.

Note that if a link diagram is an almost alternating diagram with one dealternator then the edges of its Tait graph are all of the same sign except that associated to the dealternator which will be of opposite sign. Throughout the rest of the paper, we will consider only the Tait graphs for which the edge corresponding to the dealternator is negative.

Let $D$ be an almost alternating link diagram with dealternator $c$. We will consider $c$ as the rational tangle $-1$ (rotate $D$ if necessary). If both smoothings $D^c_0$ and $D^c_\infty$ of $D$ at $c$ are reduced alternating link diagrams, then $D$ is said to be \textit{dealternator reduced}. If those both smoothings are connected alternating link diagrams, then $D$ is said to be \textit{dealternator connected}. A typical example of a dealternator connected reduced link diagram that we will need in the following is the link diagram  $n(-1+T)$ where $T$ is a strongly alternating tangle diagram.

\begin{rmq}
  Let $L$ be an almost alternating link. Let $D$ be an almost
  alternating diagram of $L$ which has the smallest crossing number
  among all almost alternating diagrams of $L$. Then $D$ has only one
  dealternator $c$ and it is a dealternator connected reduced diagram as
  showed in the proof of Corollary 4.5 in
  \cite{adams1992almost}. It is easy to show that the diagram $D$ is
  equivalent to the numerator $n(-1+T)$ where $T$ is a strongly
  alternating tangle diagram. Hence $L$ is equivalent to $N(-1+T)$.
\end{rmq}
  On the other hand, in the following, we will need to study if a link $L=N(-1+T)$ is quasi-alternating. To do this, we will use Remark \ref{rk3} below.
\begin{rmq}\label{rk3}
Consider a link $L=N(-1+T)$. If $T$ is locally knotted, then $L$ has at least one alternating factor. Let $C$ be a factorizing circle of $T$. Denote by $a$ and $b$ the intersection points of $C$ and $T$. Remove the disk bounded by $C$ and containing the alternating factor. Then, replace it with a disk containing one simple arc joining $a$ to $b$. We can repeat this operation until all alternating factors will be removed. Then we get a link $L^{'}=N(-1+T^{'})$ where $T^{'}$ is a locally unknotted tangle diagram. Those operations preserve the property of being quasi-alternating. Conversely, the link $L$ is the connected sum of $L^{'}$ with some alternating factors. Furthermore, the connected sum of any quasi-alternating links is quasi-alternating \cite{ozsvath2005heegaard}. Finally, the link $L$ is quasi alternating if and only if $L^{'}$ is quasi alternating. So we can restrict to links $L=N(-1+T)$ where $T$ is locally unknotted.
\end{rmq}

\section{Dealternator extensions\label{sect5}}
Let $D$ be an almost alternating diagram with dealternator $c$. The determinant of $D$ is given in term of the determinants of $D^{c}_0$ and $D^{c}_\infty$ as stated in the following proposition.

\begin{prop}
Let $D$ be an almost alternating diagram with dealternator $c$. Then $$ \det(D) = \left| \det \left( D^{c}_{0} \right) - \det \left( D^{c}_{\infty} \right) \right|.$$
\label{prop3.1}
\end{prop}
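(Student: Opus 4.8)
The plan is to reduce the determinant computation for an almost alternating diagram to a count of spanning trees in its Tait graph, using Lemma \ref{th2.1}. Let $G = G(D)$ be the chosen Tait graph of $D$ in which every edge is positive except the single edge $e$ corresponding to the dealternator $c$, which is negative (this is the convention fixed just before the statement). By Lemma \ref{th2.1}, $\det(D) = \left| \sum_k (-1)^k s_k(D) \right|$, where $s_k(D)$ counts spanning trees of $G$ with exactly $k$ positive edges. Since the only negative edge is $e$, a spanning tree $T$ of $G$ has $v(T)$ equal to either $|V(G)|-1$ (if $e \notin T$) or $|V(G)|-2$ (if $e \in T$). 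Hence the alternating sum collapses to $\pm\big(\#\{T : e \notin T\} - \#\{T : e \in T\}\big)$, up to an overall sign depending on the parity of $|V(G)|-1$, which is irrelevant because of the absolute value.

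Next I would identify the two tree-counts with the determinants of the smoothed diagrams. Smoothing $D$ at $c$ in the two ways produces alternating diagrams $D^c_0$ and $D^c_\infty$ whose Tait graphs are, respectively, the deletion $G - e$ and the contraction $G / e$ (which of the two smoothings corresponds to deletion and which to contraction depends on the checkerboard coloring, but one does and the other does the opposite; if necessary one adjusts by passing to the dual graph, which does not change the link). A spanning tree of $G$ not containing $e$ is exactly a spanning tree of $G - e$, and a spanning tree of $G$ containing $e$ corresponds bijectively (by contracting $e$) to a spanning tree of $G / e$. Therefore $\#\{T : e \notin T\}$ is the number of spanning trees of the Tait graph of $D^c_0$, and $\#\{T : e \in T\}$ is the number of spanning trees of the Tait graph of $D^c_\infty$ (or vice versa). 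Since $D^c_0$ and $D^c_\infty$ are alternating, the Remark following Lemma \ref{th2.1} gives that these spanning-tree counts are exactly $\det(D^c_0)$ and $\det(D^c_\infty)$. Combining, $\det(D) = \left| \det(D^c_0) - \det(D^c_\infty) \right|$.

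The main point requiring care is the bookkeeping of deletion versus contraction and its interaction with the sign convention: one must check that smoothing the dealternator really yields the graph-theoretic deletion and contraction of the edge $e$, and that the alternating diagrams $D^c_0$, $D^c_\infty$ are genuinely alternating (so that the Remark applies and their Tait graphs are monochromatic). The alternating claim is immediate because changing the crossing at $c$ makes $D$ alternating, and smoothing at $c$ in either way is a local move on that same crossing of the would-be-alternating diagram that cannot create a non-alternating crossing; one should also note that even if a smoothing produces a nugatory crossing or a disconnected diagram, the spanning-tree formula still holds (a cut-edge or isolated structure contributes a benign factor), so reducedness of $D^c_0, D^c_\infty$ is not needed here. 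Everything else is the routine collapse of the alternating sum described above.
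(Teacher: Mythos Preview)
Your proposal is correct and follows essentially the same argument as the paper: both partition the spanning trees of the Tait graph according to whether they contain the unique negative edge $e_c$, observe that the two classes correspond bijectively to spanning trees of $G_{D^c_0}$ (deletion) and $G_{D^c_\infty}$ (contraction), note that the positive-edge count $v(T)$ is either $|V(G)|-1$ or $|V(G)|-2$ accordingly, and then apply Lemma~\ref{th2.1}. The paper is slightly more terse in that it writes $\#\mathcal{T}(G_{D^c_0})=\det(D^c_0)$ directly rather than explicitly invoking the Remark, and it does not pause over the deletion/contraction bookkeeping or the alternating status of the smoothings, but the substance is identical.
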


\begin{proof}
Let $G_D$ be the Tait graph of $D$ such that the unique negative edge is that one corresponding to $c$. Call that edge $e_c$. The set $\mathcal{T}(G_D)$ of spanning trees of $G_D$ admits a partition $ \mathcal{T}^{c}_{0}(G_D) \uplus \mathcal{T}^{c}_{\infty}(G_D)$ where: $$ \mathcal{T}^{c}_{\infty}(G_D) = \lbrace T \in \mathcal{T}(G_D) / e_c \in T \rbrace \text{ , and } \mathcal{T}^{c}_{0}(G_D) = \lbrace T \in \mathcal{T}(G_D) / e_c \notin T \rbrace.$$ A tree in $\mathcal{T}^{c}_{0}(G_D)$ can be seen as a spanning tree of $G_{D^{c}_{0}}$. And if we contract the edge $e_c$ in a tree in $\mathcal{T}^{c}_{\infty}(G_D)$ we get a spanning tree of $G_{D^{c}_{\infty}}.$ Those correspondences are actually one-to-one following \cite{champanerkar2009twisting}. 
Let $T \in \mathcal{T}(G_D)$, then $e(T)=e_+(T) + e_-(T)=v(G_D)-1$. If $T$ is in $\mathcal{T}^{c}_{\infty}(G_D)$, then $e(T)= e_+(T) + 1=v(G_D)-1$, which gives $e_+(T)=v(G_D)-2$. If $T$ is in $\mathcal{T}^{c}_{0}(G_D)$, then $e(T)= e_+(T) =v(G_D)-1$. This way we have 
\begin{align*}
\displaystyle \sum_k (-1)^k s_k(D) =& (-1)^{v(G_D)-1} s_{v(G_D)-1}(D) + (-1)^{v(G_D)-2} s_{v(G_D)-2}(D) \\
=& (-1)^{v(G_D)-1} \# \mathcal{T}^{c}_{0}(G_D)  + (-1)^{v(G_D)-2} \# \mathcal{T}^{c}_{\infty}(G_D) \\
=& (-1)^{v(G_D)-1} \# \mathcal{T}(G_{D^{c}_{0}})  + (-1)^{v(G_D)-2} \# \mathcal{T}(G_{D^{c}_{\infty}}) \\
=& (-1)^{v(G_D)-1} \det(D^{c}_{0}) + (-1)^{v(G_D)-2} \det(D^{c}_{\infty}) \\
=& (-1)^{v(G_D)-1} \left( \det(D^{c}_{0}) - \det(D^{c}_{\infty}) \right)
\end{align*}
The result then follows using Theorem \ref{th2.1}.
\end{proof}

\begin{rmq}
A link with zero determinant is not quasi-alternating. This implies that if an almost alternating diagram $D$ is representing a quasi-alternating link, then the determinants of $D^{c}_0$ and $D^{c}_\infty$ must differ if $c$ is the dealternator of $D$. When $\mathcal{L}(D)$ is also almost alternating, this diffrence has a lower bound (see Corollary 1.2, \cite{lidman2017quasi}).
\end{rmq}

\begin{cor}
Let $D$ be an almost alternating link diagram with dealternator $c$. Suppose that $\mathcal{L}(D)$ is quasi-alternating, then $$ \det \left( D^{c}_{0} \right) \neq \det \left( D^{c}_{\infty} \right) .$$ If $\mathcal{L}(D)$ is also almost alternating, then $\left| \det \left( D^{c}_{0} \right) - \det \left( D^{c}_{\infty} \right) \right| \geq 8$.
\label{cor1}
\end{cor}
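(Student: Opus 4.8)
The plan is to obtain both assertions directly from Proposition \ref{prop3.1}, feeding it the two standard facts about determinants of quasi-alternating links. For the first assertion I would start from the identity $\det(D) = \left| \det(D^{c}_{0}) - \det(D^{c}_{\infty}) \right|$ supplied by Proposition \ref{prop3.1}. Since $\mathcal{L}(D)$ is quasi-alternating it has nonzero determinant (equivalently, a link of determinant $0$ is never quasi-alternating, as recorded in the remark just above the statement), and $\det(D)$ is precisely the link invariant $\det(\mathcal{L}(D))$ because the spanning-tree expression of Lemma \ref{th2.1} computes a link invariant. Hence $\left| \det(D^{c}_{0}) - \det(D^{c}_{\infty}) \right| = \det(\mathcal{L}(D)) \neq 0$, which forces $\det(D^{c}_{0}) \neq \det(D^{c}_{\infty})$.

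For the second assertion I would add the hypothesis that $\mathcal{L}(D)$ is almost alternating, so that $\mathcal{L}(D)$ is a link that is simultaneously almost alternating and quasi-alternating. Such a link is automatically non-split (being quasi-alternating), so the hypotheses of Corollary 1.2 of \cite{lidman2017quasi} are met, and that result yields the lower bound $8$ on its determinant. Combining this with Proposition \ref{prop3.1} again, $\left| \det(D^{c}_{0}) - \det(D^{c}_{\infty}) \right| = \det(D) = \det(\mathcal{L}(D)) \geq 8$, which is the claim.

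The proof has essentially no obstacle internal to this paper: the genuine content lives in the two external inputs (non-vanishing of the determinant of a quasi-alternating link, and the determinant gap of \cite{lidman2017quasi}), while Proposition \ref{prop3.1} is the bridge that converts a statement about $\det(\mathcal{L}(D))$ into a statement about the two smoothing determinants. The only steps requiring care are bookkeeping ones: verifying that the diagrammatic quantity $\det(D)$ used in Proposition \ref{prop3.1} coincides with the topological invariant $\det(\mathcal{L}(D))$, and checking that the precise hypotheses of \cite{lidman2017quasi}, Corollary 1.2 — in particular non-splitness and the ``almost alternating'' condition — are exactly what is assumed here.
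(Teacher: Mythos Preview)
Your proposal is correct and follows exactly the approach the paper intends: the corollary is stated immediately after a remark that spells out precisely this argument (nonzero determinant of quasi-alternating links for the first claim, and Corollary 1.2 of \cite{lidman2017quasi} for the second), with Proposition \ref{prop3.1} serving as the bridge. The paper gives no separate proof beyond that remark, so your write-up is essentially an expansion of it.
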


\begin{cor}
An almost alternating diagram is never quasi-alternating at its dealternator.
\label{cor2}
\end{cor}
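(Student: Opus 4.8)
The plan is to argue by contradiction, playing the determinant formula of Proposition~\ref{prop3.1} against the additivity required by the definition of a quasi-alternating crossing. Suppose $D$ is an almost alternating diagram with dealternator $c$, and assume for contradiction that $c$ is a quasi-alternating crossing of $D$. Then, by Definition of $\mathcal{Q}$, the links $\mathcal{L}(D^{c}_{0})$ and $\mathcal{L}(D^{c}_{\infty})$ both lie in $\mathcal{Q}$, and
\[
\det(D) = \det\!\left(D^{c}_{0}\right) + \det\!\left(D^{c}_{\infty}\right).
\]

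The first observation I would record is that a quasi-alternating link has strictly positive determinant: this is the content of the remark preceding Corollary~\ref{cor1} (a link with zero determinant is not quasi-alternating), so $\det(D^{c}_{0}) \geq 1$ and $\det(D^{c}_{\infty}) \geq 1$. On the other hand, since $c$ is by hypothesis the dealternator of $D$, Proposition~\ref{prop3.1} applies and gives
\[
\det(D) = \left|\det\!\left(D^{c}_{0}\right) - \det\!\left(D^{c}_{\infty}\right)\right|.
\]
Combining the two displayed equalities yields $\det(D^{c}_{0}) + \det(D^{c}_{\infty}) = \left|\det(D^{c}_{0}) - \det(D^{c}_{\infty})\right|$. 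Writing $a = \det(D^{c}_{0})$ and $b = \det(D^{c}_{\infty})$ with $a \geq b$ without loss of generality, this reads $a + b = a - b$, hence $b = 0$, contradicting $b \geq 1$.

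There is essentially no technical obstacle here; the corollary is an immediate consequence of the sign discrepancy between Proposition~\ref{prop3.1} (a \emph{difference} of determinants at the dealternator) and the \emph{sum} of determinants demanded at a quasi-alternating crossing. The only point that needs care is justifying that both smoothings have strictly positive determinant, which is exactly why the hypothesis ``$\mathcal{L}(D^{c}_{0}), \mathcal{L}(D^{c}_{\infty}) \in \mathcal{Q}$'' in the definition of a quasi-alternating crossing is used rather than merely the determinant condition.
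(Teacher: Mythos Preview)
Your proof is correct and follows exactly the approach implicit in the paper: the corollary is an immediate consequence of Proposition~\ref{prop3.1}, since the determinant identity $\det(D)=|\det(D^{c}_{0})-\det(D^{c}_{\infty})|$ at the dealternator is incompatible with the additivity $\det(D)=\det(D^{c}_{0})+\det(D^{c}_{\infty})$ required at a quasi-alternating crossing once both smoothings have positive determinant. The paper states the corollary without proof precisely because this contradiction is the only argument in play.
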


\begin{prop}
Let $T$ be an alternating tangle diagram and let $\dfrac{\alpha}{\beta} > 0$ be a rational number. We have the following 
\begin{enumerate}
\item $\det(n(\frac{\alpha}{\beta}+T)) = \beta\det(n(T))+\alpha\det(d(T)).$
\item $\det(n(-\frac{\alpha}{\beta}+T)) = \left| \beta\det(n(T))- \alpha\det(d(T)) \right|.$
\end{enumerate}
\label{prop3.2} 
\end{prop}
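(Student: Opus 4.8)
The plan is to prove both formulas simultaneously by working with Tait graphs and the spanning-tree formula for the determinant (Lemma \ref{th2.1}), in the same spirit as the proof of Proposition \ref{prop3.1}. First I would set up notation: write $t=\frac{\alpha}{\beta}$ as a rational tangle in standard form, so that $\frac{\alpha}{\beta}+T$ is obtained by gluing the integral-then-rational tangle $t$ to the left of $T$ along two endpoints. On the level of graphs, $G_n(\frac{\alpha}{\beta}+T)$ (resp. $G_d$) is obtained from the tangle graph $G_n(T)$ together with $G_d(T)$ by identifying the appropriate boundary vertices, because numerator/denominator closure of $\frac{\alpha}{\beta}+T$ restricts on the $T$-part to the numerator (resp. denominator) closure of $T$, while the $t$-part contributes a chain of edges. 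Since $t$ has a positive fraction and $T$ is alternating of type 1, all the crossings of $\frac{\alpha}{\beta}+T$ on the $t$-side carry the same sign as the $T$-side numerator edges, so $n(\frac{\alpha}{\beta}+T)$ is an alternating diagram; its determinant is then literally the number of spanning trees of its Tait graph (Remark after Lemma \ref{th2.1}).

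The combinatorial heart of the argument is a spanning-tree count. I would partition the spanning trees of $G:=G_n(\frac{\alpha}{\beta}+T)$ according to how they meet the "tangle-sum" edge(s) that separate the $t$-block from the $T$-block. Contracting/deleting along that cut expresses $\det(n(\frac{\alpha}{\beta}+T))$ as a linear combination of $\det(n(T))$ and $\det(d(T))$ with coefficients that are spanning-tree counts of the small graph coming from the rational tangle $\frac{\alpha}{\beta}$ itself. Concretely, for the rational tangle $\frac{\alpha}{\beta}$ one has $N(\frac{\alpha}{\beta})$ the $(\alpha,\beta)$ two-bridge link and $D(\frac{\alpha}{\beta})$ its "denominator", whose determinants are $\alpha$ and $\beta$; a standard deletion–contraction on the chain of twist regions of $\frac{\alpha}{\beta}$ shows that the relevant coefficients are exactly $\beta$ (the number of trees avoiding the gluing edge, which close up $n(T)$) and $\alpha$ (those using it, which close up $d(T)$). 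This yields part (1). For part (2) I would observe that $-\frac{\alpha}{\beta}+T$ differs from $\frac{\alpha}{\beta}+T$ precisely by making the $\alpha/\beta$-block carry the opposite sign, so in its Tait graph that whole block has negative edges while the $T$-block has positive edges; running the same partition of spanning trees through the signed formula $\det(D)=|\sum_k(-1)^k s_k(D)|$ introduces a relative sign between the two contributions, giving $|\beta\det(n(T))-\alpha\det(d(T))|$. (Alternatively, part (2) follows from part (1) together with Proposition \ref{prop3.1} applied to the almost alternating diagram $n(-\frac{\alpha}{\beta}+T)$ after the dealternator, viewing $-\frac{\alpha}{\beta}$ as $-1$ extended; but the direct graph computation is cleaner and self-contained.)

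The main obstacle I anticipate is bookkeeping the boundary-vertex identifications correctly — i.e. verifying that when one glues the rational block to $T$, the spanning trees that "use up" the gluing edge are in bijection with spanning trees of $G_d(T)$ (the denominator graph) and those that don't with spanning trees of $G_n(T)$, and that the tree-counts of the rational block contribute exactly the numerator $\alpha$ and denominator $\beta$ of the fraction. This is where the hypotheses that $T$ is alternating (so both $n(T)$ and $d(T)$ are alternating, and their determinants equal tree counts) and that $\frac{\alpha}{\beta}>0$ (so no cancellation occurs in the positive case and the block is genuinely alternating) get used. Once that dictionary is pinned down, both identities drop out; I would present the $\frac{\alpha}{\beta}>0$ case in detail and then indicate the sign change for the $-\frac{\alpha}{\beta}$ case rather than repeating the whole computation.
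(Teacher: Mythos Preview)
Your approach is sound but takes a genuinely different route from the paper. For Part (1) the paper argues by induction on the length $k$ of the continued fraction $[0,a_1,\dots,a_k]$: the base case $\tfrac{1}{p}$ is handled by iterating the quasi-alternating determinant relation at the crossings of the vertical twist, and the inductive step uses the isotopy $n(t+T)\sim n(\tfrac{1}{t}+\tfrac{1}{T}_c)$ of Remark \ref{rmq1} to swap numerator and denominator and reduce to a shorter continued fraction. Part (2) is then derived from Part (1) exactly via your ``alternative'': one writes $-\tfrac{\alpha}{\beta}=-1+\tfrac{\beta-\alpha}{\beta}$ and applies Proposition \ref{prop3.1} at the single dealternator. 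Your direct spanning-tree count is more structural --- it is essentially the tangle-calculus identity $\det(n(A+B))=\det(n(A))\det(d(B))+\det(d(A))\det(n(B))$, which yields the proposition in one stroke once one knows $\det(n(\tfrac{\alpha}{\beta}))=\alpha$ and $\det(d(\tfrac{\alpha}{\beta}))=\beta$ --- whereas the paper's induction avoids setting up the graph gluing but pays with a two-layer induction and a dependence on Remark \ref{rmq1}. One caution on your write-up: in the Tait graph the $t$-block and the $T$-block meet at two \emph{vertices}, not along ``tangle-sum edge(s)'', so the correct partition of spanning trees is by tree-versus-two-component-forest in each block rather than by containment of a distinguished edge; this is precisely the bookkeeping you anticipate, and it must be stated accurately (your description of $G_n(\tfrac{\alpha}{\beta}+T)$ as ``obtained from $G_n(T)$ together with $G_d(T)$'' is not literally correct) for the argument to go through.
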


\begin{rmq}
If $t$ and $T$ are respectively a rational and an alternating tangle diagrams, the link diagram $n(t+T)$ is equivalent, up to mirror image, to the link diagram $n(\frac{1}{t} + \frac{1}{T}_c)$ as shown in the figure below. Since the tangle diagram $\frac{1}{T}_c$ is also alternating, one can only restrict to the case where $0<\left| t \right| < 1$ when considering the numerator closure of $t$ summed with an alternating tangle diagram.
\begin{figure}[H]
\centering
\includegraphics[width=0.8\linewidth]{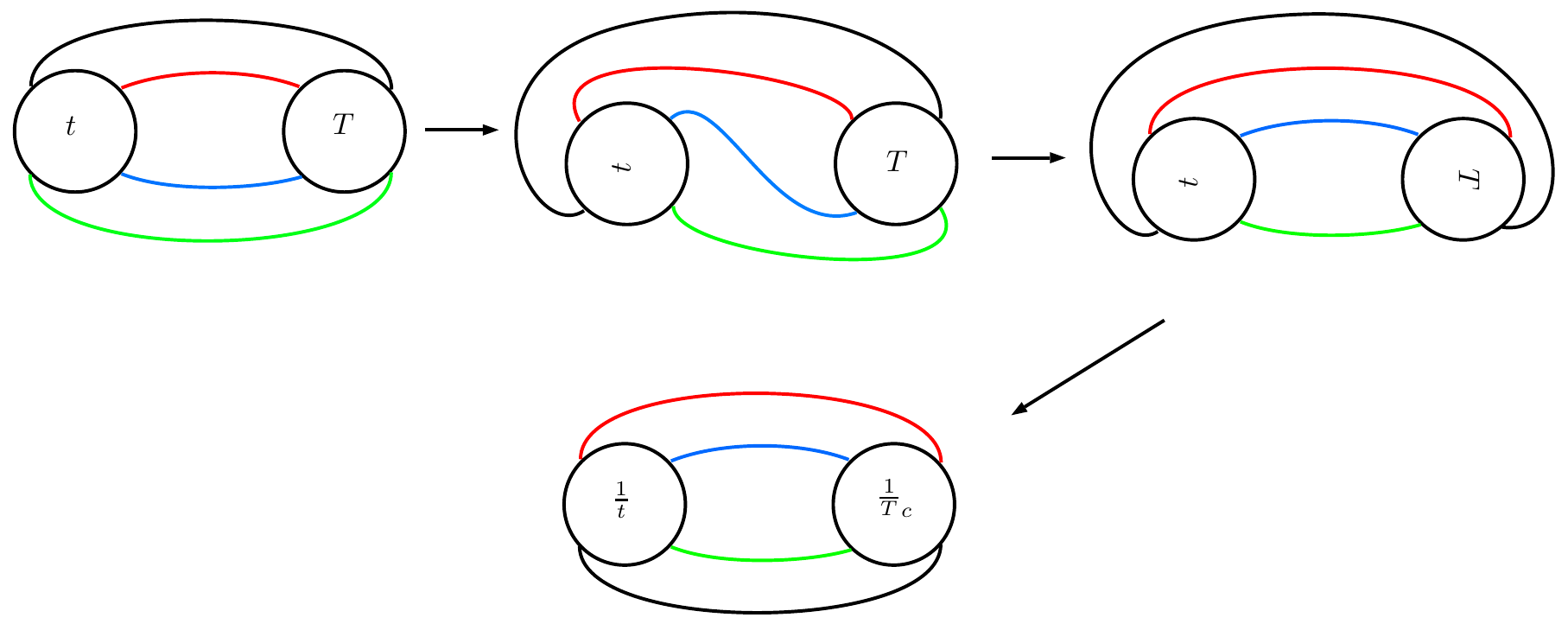}  
\end{figure}
\label{rmq1}
\end{rmq}
\begin{proof}[Proof of Proposition \ref{prop3.2}] \underline{First part of the proposition}: by Remark \ref{rmq1}, we can assume that $0 < \frac{\alpha}{\beta} < 1$ and then we write $\frac{\alpha}{\beta} = \left[ 0,a_1,...,a_k \right]$ where $k$ is a positive integer such that $a_i \geq 1$ for each $i$. We will use induction on the number $k$ of integer tangles which consist the rational tangle diagram corresponding to the fraction $\frac{\alpha}{\beta}= \left[ 0,a_1,...,a_k \right]$. For $k=1$, we have $\frac{\alpha}{\beta} = \frac{1}{p}$ for some integer $p \geq 1$. In this case we have $n(\frac{\alpha}{\beta}+T)=n(\frac{1}{p}+T)$ which is an alternating link diagram which we denote by $D_{p}$. Let us label by $c_1,...,c_p$ the crossings of the vertical tangle $\frac{1}{p}$ respectively from the top. It is clear that $(D_j)^{c_s}_{0}=D_{j-1}$ and $(D_j)^{c_s}_{\infty}=n(T)$ for each $j$, $1 \leq j \leq p$, and for each $s$, $1 \leq s \leq j$. Since the link diagram $D_j$ is quasi-alternating at the crossing $c_j$, we have 

\begin{align*}
\det(n(\frac{1}{p}+T))  &= \det(D_p) \\
&= \det(D_{p-1}) + \det(n(T)) \\
& \vdots \\
&= p\det(n(T)) + \det(d(T)).
\end{align*}

Suppose the result holds until some rank $k$. Put $\frac{\alpha}{\beta}= \left[ 0,a_1,...,a_k \right]$ and $\frac{\gamma}{\delta}=\left[ 0,q,a_1,...,a_k \right] $ for some integer $q \geq 1$. It is clear that $\left[ 0,q,a_1,...,a_k \right] = \left[a_1,...,a_k \right] * \frac{1}{q}$. Thus, we have $\frac{\gamma}{\delta}= \frac{\beta}{\alpha} * \frac{1}{q}=\frac{1}{q+\frac{\alpha}{\beta}}=\frac{\beta}{q\beta + \alpha}$.
Denote by $\mathcal{D}_j$ the link diagram $n(\frac{\beta}{\alpha} * \frac{1}{j} + T)$ for each $j$, $0 \leq j \leq q$. Also denote by $c_1,...,c_j$ the crossings of the vertical tangle $\frac{1}{j}$ respectively from the top. It is clear that $(\mathcal{D}_j)^{c_j}_{0} = \mathcal{D}_{j-1}$, and $(\mathcal{D}_j)^{c_j}_{\infty} = n(\frac{\beta}{\alpha}) \# n(T)$. Now since the link diagram $\mathcal{D}_j$ is quasi-alternating at the crossing $c_j$ for every $1 \leq j \leq q$, we have
\begin{align*}
\det(\frac{\gamma}{\delta} + T) &= \det(\mathcal{D}_q) \\
&= \det(\mathcal{D}_{q-1}) + \beta \det(n(T)) \\
& \vdots \\
&= \det(n(\frac{\beta}{\alpha} + T)) + q\beta \det(n(T)).
\end{align*}
By Remark \ref{rmq1}, we have $\det(n(\frac{\beta}{\alpha} + T)) = \det(n(\frac{\alpha}{\beta} + \frac{1}{T}_c))$. Since $\frac{\alpha}{\beta}=\left[ 0,a_1,...,a_k \right]$ consists of $k$ integer tangles, then by the induction hypothesis we have 
\begin{align*}
\det(n(\frac{\beta}{\alpha} + T)) &= \det(n(\frac{\alpha}{\beta} + \frac{1}{T}_c)) \\
&= \beta \det(n(\frac{1}{T}_c)) + \alpha \det(d(\frac{1}{T}_c)) \\
&= \beta \det(d(T)) + \alpha \det(n(T)).
\end{align*}
This way we have 
\begin{align*}
\det(\frac{\gamma}{\delta} + T) &= \det(n(\frac{\beta}{\alpha} + T)) + q\beta \det(n(T)) \\
&= \beta \det(d(T)) + \alpha \det(n(T)) + q\beta \det(n(T)) \\
&= (\alpha+q\beta)\det(n(T)) + \beta \det(d(T)) \\
&= \delta\det(n(T)) + \gamma\det(d(T)).
\end{align*}
This completes the induction argument.\\
\underline{Second part of the proposition}: Let $D$ denote the almost alternating link diagram $n(-1+\frac{\beta-\alpha}{\beta}+T)$ and let $c$ denote its dealternator. The link diagram $D$ is equivalent to $n(\frac{-\alpha}{\beta}+T)$ by the rational tangle equivalence $-1+\frac{\beta-\alpha}{\beta}=\frac{-\alpha}{\beta}$. The link diagrams $D^{c}_{0}$ and $D^{c}_{\infty}$ are respectively equivalent to $n(\frac{\beta-\alpha}{\beta}+T)$ and $d(\frac{\beta-\alpha}{\beta}) \# d(T)$. We have $\det(D^{c}_{0})= (\beta-\alpha)\det(d(T))+\beta\det(n(T))$, and $\det(D^{c}_{\infty})=\beta\det(d(T))$. By Proposition \ref{prop3.1} we have 
\begin{align*}
\det(D) &= \det(n(-\frac{\alpha}{\beta}+T)) \\
&=\left| \det(n(\frac{\beta-\alpha}{\beta}+T))-\det(d(\frac{\beta-\alpha}{\beta}) \# d(T)) \right| \\
&=\left| (\beta-\alpha)\det(d(T))+\beta\det(n(T))- \beta\det(d(T)) \right| \\
&=\left| \beta\det(n(T))- \alpha\det(d(T)) \right|.
\end{align*}
\end{proof}

\begin{cor}
Let $D$ be an almost alternating link diagram with dealternator $c$. Then $$ \det \left( D^{c \leftarrow\frac{1}{-n}} \right) = \left| n \times \det \left( D^{c}_{0} \right) - \det \left( D^{c}_{\infty} \right) \right|, \det \left( D^{c \leftarrow-n} \right) = \left| n \times \det \left( D^{c}_{\infty} \right) - \det \left( D^{c}_{0} \right) \right|.$$
\label{cor3}
\end{cor}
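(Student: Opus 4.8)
To establish the two identities of Corollary~\ref{cor3}, the plan is to present $D$ and each of its rational extensions at $c$ as a numerator closure of the form $n(-\tfrac{\alpha}{\beta}+S)$ with $S$ an alternating tangle diagram, and then to read off the determinants directly from the second part of Proposition~\ref{prop3.2}. First I would normalise $D$. Excising a small ball around the dealternator $c$ and regarding $c$ as the rational tangle $-1$, the remainder of $D$ is a $2$-string tangle, so after a rotation we may write $D=n(-1+T)$, where $T$ is a type~$1$ alternating tangle diagram: it is alternating because changing $c$ to $+1$ turns $D$ into the alternating diagram $n(1+T)$, of which $T$ is a subtangle. Tracking the checkerboard colouring (the dealternator edge $e_c$ of $G_D$ being negative) together with the delete/contract dichotomy used in the proof of Proposition~\ref{prop3.1}, one identifies the two smoothings of $D$ at $c$ as $D^{c}_{0}=n(T)$ and $D^{c}_{\infty}=d(T)$.

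Then I would carry out the two computations separately. Since $\tfrac{1}{-n}=-1+\tfrac{n-1}{n}=-\tfrac{1}{n}$ as rational tangles, we have $D^{c\leftarrow \frac{1}{-n}}=n\!\left(-\tfrac{1}{n}+T\right)$, and Proposition~\ref{prop3.2}(2) applied with $\tfrac{\alpha}{\beta}=\tfrac{1}{n}$, i.e.\ $\alpha=1$ and $\beta=n$, gives
$$\det\!\left(D^{c\leftarrow \frac{1}{-n}}\right)=\left|\,n\det(n(T))-\det(d(T))\,\right|=\left|\,n\det\!\left(D^{c}_{0}\right)-\det\!\left(D^{c}_{\infty}\right)\,\right|.$$
For the other extension, $D^{c\leftarrow -n}=n(-n+T)$; by Remark~\ref{rmq1} this diagram equals, up to mirror image and hence with the same determinant, $n\!\left(\tfrac{1}{-n}+\tfrac{1}{T}_c\right)=n\!\left(-\tfrac{1}{n}+\tfrac{1}{T}_c\right)$, where $\tfrac{1}{T}_c$ is again a type~$1$ alternating tangle diagram. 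Proposition~\ref{prop3.2}(2) applied to $\tfrac{1}{T}_c$ with $\tfrac{\alpha}{\beta}=\tfrac{1}{n}$ then yields
$$\det\!\left(D^{c\leftarrow -n}\right)=\left|\,n\det\!\left(n\!\left(\tfrac{1}{T}_c\right)\right)-\det\!\left(d\!\left(\tfrac{1}{T}_c\right)\right)\,\right|,$$
and since inversion of a tangle interchanges its numerator and denominator closures, $\det(n(\tfrac{1}{T}_c))=\det(d(T))=\det(D^{c}_{\infty})$ and $\det(d(\tfrac{1}{T}_c))=\det(n(T))=\det(D^{c}_{0})$, which turns the right-hand side into $\left|\,n\det(D^{c}_{\infty})-\det(D^{c}_{0})\,\right|$, the second claimed identity.

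The step I expect to be the main obstacle is the bookkeeping pinning down $D^{c}_{0}=n(T)$ and $D^{c}_{\infty}=d(T)$, and not the other way round: it is exactly this matching that decides which of $\det(D^{c}_{0})$, $\det(D^{c}_{\infty})$ gets multiplied by $n$ in each formula, so the checkerboard colouring and the delete/contract correspondence from Proposition~\ref{prop3.1} have to be followed through carefully (note that a rotation of $D$ swaps $n(T)\leftrightarrow d(T)$ and $D^{c}_{0}\leftrightarrow D^{c}_{\infty}$ simultaneously, so the identification is well posed). Two further routine points should be checked in passing: that $T$ and $\tfrac{1}{T}_c$ genuinely meet the hypotheses of Proposition~\ref{prop3.2}, i.e.\ are alternating of type~$1$, so that $n(-\tfrac{1}{n}+T)$ and $n(-\tfrac{1}{n}+\tfrac{1}{T}_c)$ are the almost alternating diagrams that proposition governs (this is guaranteed by $D$ being almost alternating with $c$ its unique dealternator); and the degenerate case $n=1$, where $D^{c\leftarrow -1}=D^{c\leftarrow \frac{1}{-1}}=D$ and both identities reduce, correctly, to Proposition~\ref{prop3.1}.
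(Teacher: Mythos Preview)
Your proof is correct and follows the paper's own approach: write $D=n(-1+T)$ with $T$ alternating, identify $D^{c}_{0}=n(T)$ and $D^{c}_{\infty}=d(T)$, and feed the extensions into Proposition~\ref{prop3.2}(2). The one small detour you take is for $D^{c\leftarrow -n}$: you pass through Remark~\ref{rmq1} to reduce to a fraction less than~$1$, whereas Proposition~\ref{prop3.2} is stated for any positive $\tfrac{\alpha}{\beta}$, so one may apply it directly with $\alpha=n$, $\beta=1$ to $n(-n+T)$ and read off $\left|\det(n(T))-n\det(d(T))\right|$ without the inversion step.
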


\begin{proof}
$D$ can be represented as $n(-1+T)$ where $T$ is an alternating tangle diagram (not necessarily strongly alternating in this case). The link diagrams $D^{c}_{0}$ and $D^{c}_{\infty}$ would be equivalent to $n(T)$ and $d(T)$ respectively. The result follows by using Proposition \ref{prop3.2}.
\end{proof}

Let us take two strongly alternating tangle diagrams $T$ and $S$ of different types. The link diagram $n(T+S)$ is said to be \textit{semi-alternating}. A link is \textit{semi-alternating} if it admits a semi-alternating diagram. Semi-alternating links are non-split as shown in Proposition 6 of \citep{lickorish1988some}. Semi-alternating links represent a special case of \textit{adequate links}, which we do not define in this paper. Adequate links are non-quasi-alternating links since they have thick Khovanov homology (Proposition 7 in \cite{khovanov2003patterns}). By using the notion of semi-alternating links, we get the following proposition.

\begin{prop}
Let $T$ be a strongly alternating tangle diagram and $\dfrac{a}{b}$ be a rational number, $0<\dfrac{a}{b}<1$. If the link $N(-\dfrac{a}{b} + T)$ is quasi-alternating, then $\dfrac{\det(n(T))}{\det(d(T))} > \dfrac{a}{b}$. 
\label{prop3.3} 
\end{prop}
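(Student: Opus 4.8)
The plan is to compute the determinant of $N(-\tfrac ab+T)$ from Proposition~\ref{prop3.2}, use the fact that a quasi-alternating link has nonzero determinant to get that $\tfrac{\det(n(T))}{\det(d(T))}$ differs from $\tfrac ab$, and then rule out the ``wrong'' direction by showing that in that case the link cannot be quasi-alternating. By the second part of Proposition~\ref{prop3.2},
\[
\det\!\bigl(N(-\tfrac ab+T)\bigr)=\bigl\lvert\, b\,\det(n(T))-a\,\det(d(T))\,\bigr\rvert .
\]
Since $T$ is strongly alternating, $n(T)$ and $d(T)$ are connected, reduced, alternating diagrams, so by Lemma~\ref{th2.1} their determinants count spanning trees of connected Tait graphs and are strictly positive. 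As a link of determinant $0$ is never quasi-alternating, the hypothesis forces $b\,\det(n(T))\neq a\,\det(d(T))$, i.e.\ $\tfrac{\det(n(T))}{\det(d(T))}\neq\tfrac ab$. It therefore suffices to rule out $b\,\det(n(T))<a\,\det(d(T))$, and I would do so by contradiction.

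Assume $b\,\det(n(T))<a\,\det(d(T))$. Since $0<\tfrac ab<1$ we have $-\tfrac ab=-1+\tfrac{b-a}{b}$ with $0<\tfrac{b-a}{b}<1$, so writing $S:=\tfrac{b-a}{b}+T$ the link is $N(-1+S)$, where $S$ is again strongly alternating: $n(S)=n(\tfrac{b-a}{b}+T)$ is connected, reduced and alternating, while $d(S)=d(\tfrac{b-a}{b})\#d(T)$. Now $d(\tfrac{b-a}{b})=N(\tfrac{b}{b-a})$ is the $2$-bridge link of the reduced fraction $\tfrac b{b-a}$, of determinant $b$; combining this with the first part of Proposition~\ref{prop3.2} and multiplicativity of the determinant under connected sum gives $\det(n(S))=b\,\det(n(T))+(b-a)\,\det(d(T))$ and $\det(d(S))=b\,\det(d(T))$, so the standing assumption is \emph{exactly} $\det(n(S))<\det(d(S))$: for the almost-alternating diagram $n(-1+S)$, the $\infty$-smoothing at the dealternator dominates the $0$-smoothing (the borderline case $\det(n(S))=\det(d(S))$ being already excluded, as it yields determinant $0$).

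The remaining — and main — step is to show that an almost-alternating link $N(-1+S)$ with $S$ strongly alternating and $\det(n(S))<\det(d(S))$ is not quasi-alternating. I would establish this by proving the link is homologically thick. First, by Remark~\ref{rk3} together with Lemma~\ref{lem2.2}, one may reduce to the locally unknotted case, where $n(S)$ or $d(S)$ is prime. Then one argues that, once $\det(d(S))$ dominates, $N(-1+S)$ is adequate — most naturally by exhibiting it as a semi-alternating link (absorbing the negative dealternator crossing, up to isotopy, into a second strongly alternating tangle of the type opposite to $T$), so that it is not quasi-alternating by Proposition~7 of \cite{khovanov2003patterns}; an alternative is to show directly that its branched double cover is not an $L$-space, contradicting \cite{ozsvath2005heegaard}. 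I expect this step to be the main obstacle. The key subtlety is that the argument must genuinely use the direction $\det(n(S))<\det(d(S))$: in the opposite regime $\det(n(S))>\det(d(S))$ the link $N(-1+S)$ can be quasi-alternating — that is precisely the situation produced by the dealternator extensions of Theorem~\ref{mytheo1}.
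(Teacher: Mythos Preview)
Your setup is fine through the point where, assuming $b\det(n(T))<a\det(d(T))$, you rewrite the link as $N(-1+S)$ with $S=\tfrac{b-a}{b}+T$ strongly alternating and $\det(n(S))<\det(d(S))$. The gap is the next step: you assert that such an $N(-1+S)$ must be adequate (e.g.\ semi-alternating), but you give no mechanism for this, and a determinant inequality by itself does not control adequacy or the diagrammatic type of the tangle pieces. A single crossing $-1$ is never a strongly alternating tangle, and there is no general isotopy that ``absorbs'' it into one; so the semi-alternating route, as stated, does not go through. You acknowledge this is the main obstacle --- it is in fact where the argument stalls.

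The paper avoids this entirely by using the hypothesis \emph{constructively} rather than trying to obstruct $N(-\tfrac ab+T)$ directly. It adds one more negative crossing, considering $D'=n(-1+(-\tfrac ab)+T)$ with leftmost crossing $c'$. Then $(D')^{c'}_0=n(-\tfrac ab+T)$ is quasi-alternating by hypothesis, $(D')^{c'}_\infty=d(-\tfrac ab)\#d(T)$ is alternating hence quasi-alternating, and under the standing inequality a determinant computation via Proposition~\ref{prop3.2} gives $\det(D')=\det((D')^{c'}_0)+\det((D')^{c'}_\infty)$. Thus $c'$ is a quasi-alternating crossing, and Theorem~\ref{th1.1} makes the extension $D'^{\,c'\leftarrow -\frac12}=n\bigl((-\tfrac12-\tfrac ab)+T\bigr)$ quasi-alternating. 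But $-\tfrac12-\tfrac ab$ is a rational tangle whose numerator and denominator closures are both nontrivial reduced alternating diagrams, hence strongly alternating of type~2, while $T$ is strongly alternating of type~1; so this diagram is semi-alternating and therefore not quasi-alternating --- the desired contradiction. The point you were missing is that the semi-alternating diagram is not $N(-1+S)$ itself but a one-step extension of a diagram one crossing larger, and the hypothesis is what certifies that extension is legitimate.
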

\begin{proof}
Since $N(\frac{-a}{b}+T)$ is quasi-alternating, then its determinant is not zero. Hence by  Proposition \ref{prop3.2} we have $\frac{\det(n(T))}{\det(d(T))} \neq \frac{a}{b}$. 

Suppose that $\frac{\det(n(T))}{\det(d(T))} < \frac{a}{b}$. Let $D'$ denote the link diagram $n(-1+\frac{-a}{b}+T)$ and let $c'$ denote the leftmost crossing of $D'$. The link diagram $D'$ is equivalent to $n(-2+\frac{b-a}{b}+T)$ because of the rational tangle equivalence $-2+\frac{b-a}{b} = -1+\frac{-a}{b}$. By Proposition \ref{prop3.2} we have 
\begin{align*}
\det(D')&=\det(n(-2+\frac{b-a}{b}+T)) \\
&=\left| 2\det(d(\frac{b-a}{b}+T)) - \det(n(\frac{b-a}{b}+T)) \right| \\
&=(b+a)\det(d(T))-b\det(n(T)).
\end{align*}
On the other hand, the link diagrams $D'^{c'}_{0}$ and $D'^{c'}_{\infty}$ are respectively equivalent to $n(\frac{-a}{b}+T)$ and $d(\frac{-a}{b}) \# d(T)$. This implies that $\mathcal{L}( D'^{c'}_{\infty})$ is quasi-alternating because it is a connected sum of alternating links. The link $\mathcal{L}( D'^{c'}_{0})$ is quasi-alternating by assumption. On the other hand, we have the following
\begin{align*}
\det(D'^{c'}_{0})+\det(D'^{c'}_{\infty})&=a\det(d(T))-b\det(n(T))+b\det(d(T)) \\
&=(b+a)\det(d(T))-b\det(n(T)) \\
&=\det(D').
\end{align*}
This implies that the link diagram $D'$ is quasi-alternating at the crossing $c'$. Hence, by Theorem \ref{th1.1} , the link diagram $D'^{c'\leftarrow\frac{-1}{2}}$, which is equivalent to the semi-alternating diagram $n((\frac{-1}{2}+\frac{-a}{b})+T)$, is quasi-alternating. This is absurd since semi-alternating links are non-quasi-alternating.
Consequently, the inequality $\frac{\det(n(T))}{\det(d(T))} > \frac{a}{b}$ is true. 
\end{proof}

\begin{empl}
  Let $L$ be the link $N(-\left[ 0,1,10,1,6 \right]+T)$ depicted in Fig. \ref{fig14bis}. We have
  $$\frac{\det(n(T))}{\det(d(T))} = \frac{65}{71}<\frac{76}{83}=\left[ 0,1,10,1,6 \right].$$ Then by Proposition \ref{prop3.3} $L$ is not quasi-alternating.
\begin{figure}[H]
\centering
\includegraphics[width=0.5\linewidth]{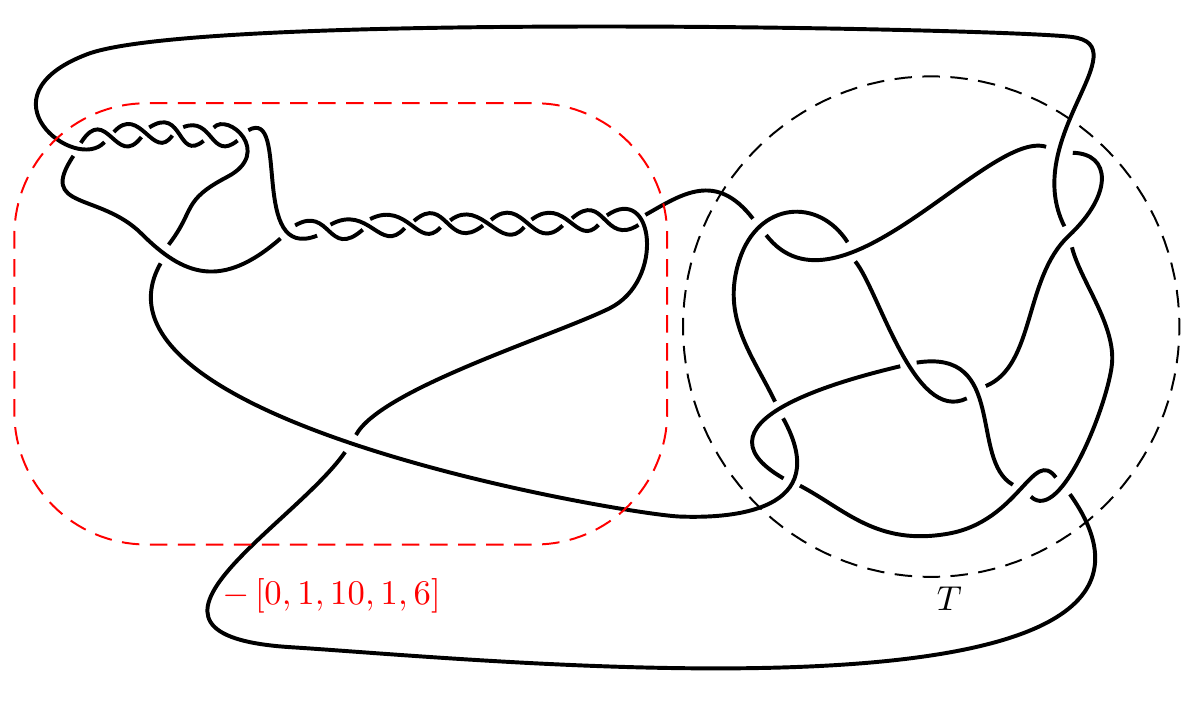}
\caption{A non-quasi-alternating link.\label{fig14bis}}
\end{figure}
\end{empl}

\begin{rmq}
Let $T$ be a strongly alternating tangle diagram and $t$ be a rational tangle diagram, $0<t<1$. The Proposition \ref{prop3.3} provides an obstruction criterion for quasi-alternateness of the link $L=N(-t+T)$. However it is not a sufficient condition. Indeed, if $t=\frac{1}{2}$ and $T=\frac{1}{3}+\frac{1}{3}$, although the condition holds, the link $N(-t+T)$, which is equivalent to the Montesinos link $M(-1;2,3,3)$, is not quasi-alternating by Theorem \ref{th4.1}.  
\label{rmq6}
\end{rmq}

\begin{cor}
Let $T$ be a strongly-alternating tangle diagram.
\begin{enumerate}
\item If $N(\frac{-1}{2}+T)$ is quasi-alternating, then the link diagram $n(\frac{-1}{k}+T)$ is quasi-alternating at every crossing of the vertical tangle $\dfrac{-1}{k}$ for every integer $k \geq 3$.
\item If $N(-2+T)$ is quasi-alternating, then the link diagram $n(-k+T)$ is quasi-alternating at every crossing of the integer tangle $-k$ for every integer $k \geq 3$.
\end{enumerate}
\label{cor5}
\end{cor}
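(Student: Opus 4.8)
The plan is to reduce both statements to repeated applications of the rational-extension theorem (Theorem \ref{th1.1}), exactly mirroring the inductive bookkeeping already carried out inside the proof of Proposition \ref{prop3.3}. The key preliminary observation is that when $N(-\tfrac12+T)$ (resp.\ $N(-2+T)$) is quasi-alternating, the hypothesis of Proposition \ref{prop3.3} is met, so we get the strict inequality $\tfrac{\det(n(T))}{\det(d(T))}>\tfrac12$ (resp., after applying Remark \ref{rmq1} to pass between $N(-2+T)$ and the numerator of a tangle with fraction in $(0,1)$, the companion inequality $\tfrac{\det(d(T))}{\det(n(T))}>\tfrac12$, i.e.\ $\det(n(T))<2\det(d(T))$). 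These inequalities are what will make the determinant additivity at each newly created crossing come out right.

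For part (1), I would argue by induction on $k\ge 2$, the base case $k=2$ being the hypothesis. Write $D_k=n(\tfrac{-1}{k}+T)$ and label the crossings of the vertical tangle $\tfrac{-1}{k}$ as $c_1,\dots,c_k$ from the top; equivalently present $D_k$ as $n(-1+\tfrac{k-1}{k}+T)$ with dealternator the leftmost crossing. The two smoothings of $D_k$ at a crossing $c_s$ of $\tfrac{-1}{k}$ are $D_{k-1}$ and the connected sum $d(\tfrac{-1}{k-1})\,\#\,d(T)$ of alternating links (hence quasi-alternating), while $D_{k-1}$ is quasi-alternating by the inductive hypothesis. Using Corollary \ref{cor3} (or directly Proposition \ref{prop3.2}) one computes $\det(D_k)=k\det(n(T))-\det(d(T))$ and $\det(D_{k-1})=(k-1)\det(n(T))-\det(d(T))$, and one checks the identity
\begin{align*}
\det(D_{k-1})+\det\bigl(d(\tfrac{-1}{k-1})\,\#\,d(T)\bigr)
&=(k-1)\det(n(T))-\det(d(T))+\det(d(T))\\
&=k\det(n(T))-\det(d(T))=\det(D_k),
\end{align*}
where the strict inequality $\det(n(T))>\det(d(T))\ge 1$ (from Proposition \ref{prop3.3} with $\tfrac{a}{b}=\tfrac12$, since $\det(n(T))>\tfrac12\det(d(T))$ forces $2\det(n(T))>\det(d(T))$; one actually needs $k\det(n(T))>\det(d(T))$, which holds for all $k\ge 2$) guarantees all determinants involved are positive so that no cancellation spoils the absolute values in Corollary \ref{cor3}. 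Hence $c_s$ is a quasi-alternating crossing of $D_k$ and $D_k$ is quasi-alternating; this completes the induction and shows $D_k$ is quasi-alternating at every crossing of $\tfrac{-1}{k}$ for $k\ge 3$. Part (2) is the mirror argument: by Remark \ref{rmq1} the diagram $n(-k+T)$ is, up to mirror image, the numerator of a rational-over-alternating sum with fraction in $(0,1)$, so the same induction applies with the roles of $n(T)$ and $d(T)$ interchanged, using the smoothings $n(-(k-1)+T)$ and $d(-1)\,\#\,d(T)$ together with $\det\bigl(n(-k+T)\bigr)=k\det(d(T))-\det(n(T))$ from Corollary \ref{cor3}.

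The main obstacle is purely a sign/positivity issue: to invoke Theorem \ref{th1.1} at a crossing one must verify the \emph{determinant additivity} $\det(L)=\det(\mathcal L(D^c_0))+\det(\mathcal L(D^c_\infty))$ without absolute-value ambiguity, so one has to know that at every stage the quantities $k\det(n(T))-\det(d(T))$ (resp.\ $k\det(d(T))-\det(n(T))$) are genuinely positive for all $k\ge 2$. This is exactly where the strict inequality supplied by Proposition \ref{prop3.3} is indispensable: it upgrades the $k=2$ case $2\det(n(T))>\det(d(T))$ to positivity for every larger $k$, since the map $k\mapsto k\det(n(T))-\det(d(T))$ is increasing. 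Once positivity is in hand, the rest is the bookkeeping identity displayed above, and the corollary follows by iterating Theorem \ref{th1.1} one crossing at a time from the top of the vertical (resp.\ integral) tangle downward.
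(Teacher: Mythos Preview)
Your overall strategy is the paper's: invoke Proposition~\ref{prop3.3} to get $2\det(n(T))>\det(d(T))$, then verify the quasi-alternating condition at a crossing of the twist (the paper checks only $k=3$ and then cites Theorem~\ref{th1.1}, while you unwind the induction over all $k$; these are equivalent). However, there is a genuine error in your identification of the smoothings.

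At any crossing $c_s$ of the vertical tangle $-\tfrac{1}{k}$ in $D_k=n(-\tfrac{1}{k}+T)$, one smoothing indeed gives $D_{k-1}$, but the other smoothing turns $-\tfrac{1}{k}$ into the $0$--tangle (the $s-1$ twists above and the $k-s$ twists below $c_s$ untwist freely once the crossing is opened horizontally), so it yields $n(T)$, \emph{not} $d(-\tfrac{1}{k-1})\,\#\,d(T)$. Compare the positive case inside the proof of Proposition~\ref{prop3.2}, where $(D_j)^{c_s}_\infty=n(T)$. Your displayed identity is consequently false as written: $(k-1)\det(n(T))-\det(d(T))+\det(d(T))=(k-1)\det(n(T))$, which is not $k\det(n(T))-\det(d(T))$. (Even granting your wrong smoothing, note $\det\bigl(d(-\tfrac{1}{k-1})\bigr)=k-1$, not $1$.) With the correct smoothing the additivity becomes
\[
\det(D_{k-1})+\det(n(T))=\bigl[(k-1)\det(n(T))-\det(d(T))\bigr]+\det(n(T))=k\det(n(T))-\det(d(T))=\det(D_k),
\]
and positivity for all $k\ge 2$ follows from $2\det(n(T))>\det(d(T))$ exactly as you say. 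For part~(2) the analogous second smoothing of $n(-k+T)$ at a crossing of the horizontal tangle $-k$ is simply $d(T)$; your $d(-1)\,\#\,d(T)$ happens to be the same link, so that half survives. Finally, be careful not to conflate the diagram $n(-\tfrac{1}{k}+T)$ with the isotopic but \emph{different} diagram $n(-1+\tfrac{k-1}{k}+T)$: the corollary concerns the crossings of the former, and the dealternator of the latter is not one of them.
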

\begin{proof}
\begin{enumerate}
\item If $N(\frac{-1}{2}+T)$ is quasi-alternating, then by Proposition \ref{prop3.3} we have $\frac{\det(n(T)}{\det(d(T))} > \frac{1}{2}$, which is equivalent to $2\det(n(T))-\det(d(T)) > 0$. This last condition is enough to show that the link diagram $n(\frac{-1}{3}+T)$ is quasi-alternating at each crossing of the vertical tangle $\frac{-1}{3}$. The result then follows by Theaorem \ref{th1.1}.
\item The result follows by an analogous argument.
\end{enumerate}
\end{proof}
Now we are ready to prove Theorem \ref{mytheo1}. We will start by giving an expanded version.
\begin{theo}
  Let $D$ be an almost alternating diagram representing a quasi-alternating link. Denote $c$ its dealternator. We have the following properties.
  \begin{enumerate}
  \item If $\det \left( D^{c}_{0} \right) >\det \left( D^{c}_{\infty} \right)$, then the rational extension with fraction $-\dfrac{1}{2}$ 
of $D$ at $c$ yields a quasi-alternating diagram.
\item If $\det \left( D^{c}_{0} \right) <\det \left( D^{c}_{\infty} \right)$, then the rational extension with fraction $-2$  of $D$ at $c$ yields a quasi-alternating diagram.
\end{enumerate}
\label{th3.2}
\end{theo}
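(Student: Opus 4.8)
The plan is to reduce both statements to the defining property of $\mathcal{Q}$ by exhibiting, in the extended diagram, one crossing at which that diagram is quasi-alternating. First I would normalise as in the subsection on almost alternating links: rotate $D$ so that the dealternator $c$ is the rational tangle $-1$ and write $D=n(-1+T)$ with $T$ an alternating tangle diagram, so that $D^{c}_{0}=n(T)$ and $D^{c}_{\infty}=d(T)$, as in the proof of Corollary \ref{cor3}. Set $a=\det(n(T))$ and $b=\det(d(T))$; by Corollary \ref{cor1} these are distinct. I would prove statement (2) directly and then deduce statement (1) from it via the inversion identity of Remark \ref{rmq1}.

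For statement (2), assume $a<b$ and let $E=D^{c\leftarrow -2}=n(-2+T)$. The key point is that $-2=(-1)+(-1)$ as a horizontal sum, so $E=n\bigl(-1+(-1+T)\bigr)$ and the leftmost crossing $c'$ of the inserted twist region is itself a $-1$ tangle. Smoothing $c'$ then requires only the elementary identities $n(0+X)=n(X)$ and $n(\infty+X)=d(X)$: with $X=-1+T$ one gets $E^{c'}_{0}=n(-1+T)=D$ and $E^{c'}_{\infty}=d(-1+T)$, and the latter reduces to $d(T)$ by a Reidemeister~I move removing the nugatory crossing. Hence $\mathcal{L}(E^{c'}_{0})=\mathcal{L}(D)$ is quasi-alternating by hypothesis, while $\mathcal{L}(E^{c'}_{\infty})=\mathcal{L}(d(T))$ is an alternating link of determinant $b$; since $a\geq 0$ and $a<b$ we have $b\geq 1$, so this link is non-split, hence quasi-alternating by \cite{ozsvath2005heegaard}. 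Finally I would check the determinant condition: Corollary \ref{cor3} gives $\det(E)=|2b-a|=2b-a$, Proposition \ref{prop3.1} gives $\det(E^{c'}_{0})=\det(D)=b-a$, and $\det(E^{c'}_{\infty})=b$, so $\det(E)=\det(E^{c'}_{0})+\det(E^{c'}_{\infty})$. Therefore $c'$ is a quasi-alternating crossing of $E$, and $\mathcal{L}(D^{c\leftarrow -2})\in\mathcal{Q}$.

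For statement (1) I would not repeat the computation but deduce it from statement (2). Assume $a>b$ and put $S=\frac{1}{T}_{c}$, an alternating tangle diagram with $\det(n(S))=\det(d(T))=b$ and $\det(d(S))=\det(n(T))=a$. Then $D'=n(-1+S)$ is, up to mirror image, the diagram $D$ (Remark \ref{rmq1}), hence an almost alternating diagram representing a quasi-alternating link, and since $b<a$ it falls under statement (2). So $n(-2+S)$, the extension of $D'$ at its dealternator with fraction $-2$, is quasi-alternating; but by Remark \ref{rmq1} again $n(-2+S)$ is, up to mirror image, the diagram $n(-\frac{1}{2}+T)=D^{c\leftarrow -\frac{1}{2}}$, and $\mathcal{Q}$ is closed under mirror image, so $D^{c\leftarrow -\frac{1}{2}}\in\mathcal{Q}$.

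I expect the determinant bookkeeping (immediate from Proposition \ref{prop3.1} and Corollary \ref{cor3}) and the reduction to the normal form $D=n(-1+T)$ to give no trouble. The step I would write out most carefully — and which I regard as the only real obstacle — is the identification of the two smoothings of $c'$ with $D$ and with a Reidemeister~I modification of $d(T)$: this is where one has to be attentive to the tangle-diagram conventions (which resolution of the $-1$ tangle is the $0$-smoothing and which the $\infty$-smoothing, and the effect of the closure on the outermost $-1$), and where one uses that $\mathcal{L}(d(T))$, being alternating with nonzero determinant, is non-split and hence quasi-alternating.
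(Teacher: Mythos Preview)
Your argument is correct and is essentially the paper's proof: you exhibit a crossing of the inserted twist at which the extended diagram is quasi-alternating, identifying the two resolutions with $D$ itself and with the alternating link $\mathcal{L}(D^{c}_{0})$ (resp.\ $\mathcal{L}(D^{c}_{\infty})$), and you verify the determinant identity via Proposition~\ref{prop3.1} and Corollary~\ref{cor3}. The only cosmetic difference is that the paper treats case~(1) directly and dismisses case~(2) as ``analogous'', whereas you treat case~(2) directly and recover case~(1) through the inversion of Remark~\ref{rmq1}; this shortcut is fine provided you check that $S=\tfrac{1}{T}_c$ has the correct type so that the $-1$ is again the dealternator of $n(-1+S)$, otherwise just repeat the direct computation as the paper does.
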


\begin{proof}
First, sufficient and necessary conditions will be exhibited for $D^{c \leftarrow-\frac{1}{2}}$ to be quasi-alternating at $c$. 
$D^{c \leftarrow-\frac{1}{2}}$ is quasi-alternating at $c$ if and only if 
\begin{align}
 \label{cond1}
 \mathcal{L}\left(\left(D^{c \leftarrow-\frac{1}{2}} \right)^{c}_{0}\right),\mathcal{L}\left(\left(D^{c \leftarrow-\frac{1}{2}} \right)^{c}_{\infty}\right) \in \mathcal{Q}, \text{ and }
\end{align}
\begin{align}
 \label{cond2}
 \det\left(D^{c \leftarrow-\frac{1}{2}} \right) = \det\left(\left(D^{c \leftarrow-\frac{1}{2}} \right)^{c}_{0}\right) + \det\left(\left(D^{c \leftarrow-\frac{1}{2}} \right)^{c}_{\infty}\right).
\end{align}
Since $\mathcal{L}\left(\left(D^{c \leftarrow-\frac{1}{2}} \right)^{c}_{0}\right) = \mathcal{L}(D^{c}_{0})$ is alternating and $\mathcal{L}\left(\left(D^{c \leftarrow-\frac{1}{2}}\right)^{c}_{\infty}\right) = \mathcal{L}(D)$ is quasi-alternating by assumption, then condition (\ref{cond1}) is always satisfied. On the other hand, condition (\ref{cond2}) is , by Proposition \ref{prop3.2} and Corollary \ref{cor1}, equivalent to $ \det(D^{c}_{0}) > \det(D^{c}_{\infty})$. Finally we get that, under the assumptions of the theorem, $D^{c \leftarrow-\frac{1}{2}}$ is quasi-alternating at $c$ if and only if $\det(D^{c}_{0}) > \det( D^{c}_{\infty})$.
By an analogical reasoning we obtain that $D^{c\leftarrow-2}$ is quasi-alternating at $c$ if and only if $ \det(D^{c}_{0}) < \det(D^{c}_{\infty})$.
Following Corollary \ref{cor1}, either $D^{c \leftarrow-\frac{1}{2}}$ or $D^{c\leftarrow-2}$ is quasi-alternating at $c$.
\end{proof}

\begin{empl}
We apply Theorem \ref{th3.2} to the link diagram on the left of Fig. \ref{exmpl11}. It is an almost alternating diagram which is quasi-alternating at the marked crossing. It represents the tabulated link $L7n2$ in \cite{KnotInfo}. Then we get the quasi-alternating diagram on the right of Fig. \ref{exmpl11}. This has one additional component and represents the tabulated link $L9n28$ in \cite{KnotInfo}. 
\begin{figure}[H]
\centering
\includegraphics[width=0.8\linewidth]{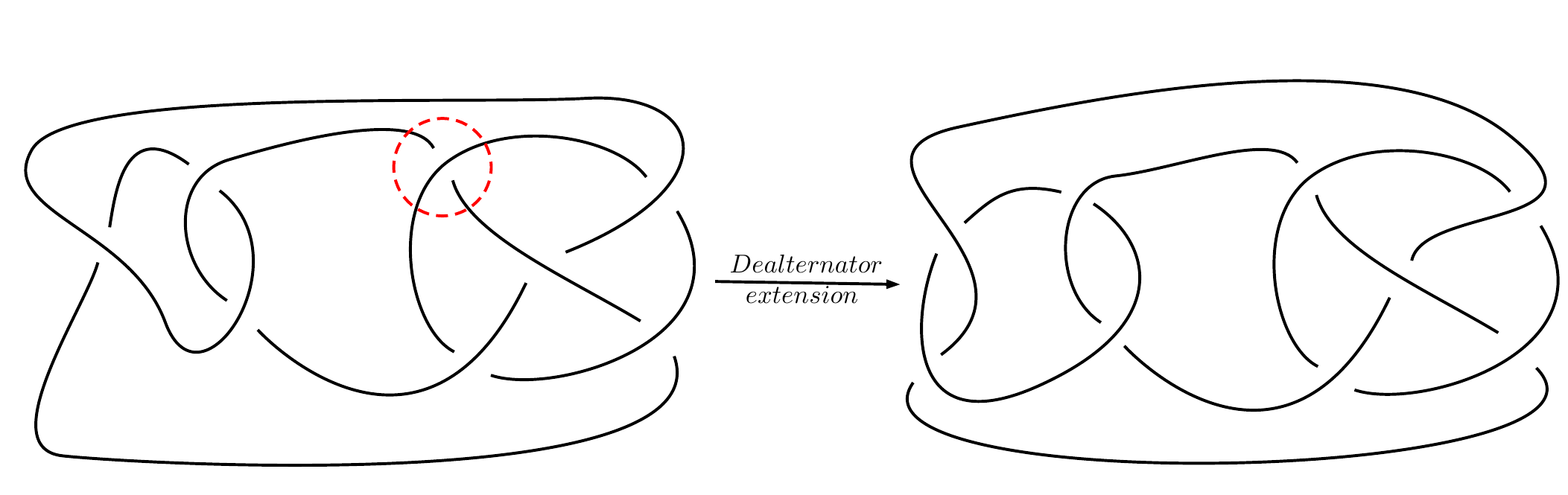} 
\caption{A dealternator extension of a diagram of the link $L7n2$ that provides the quasi-alternating link $L9n28$.}
\label{exmpl11} 
\end{figure}
\end{empl}

\section{Applications}
\subsection{Generating non-alternating and quasi-alternating Montesinos links}

Let $t_i \neq 0 , \pm 1$, for $i \in [\![1,n]\!]$, be rational numbers, and let $e$ be an integer. A \textit{Montesinos link} is defined as $ M(e;t_1,...,t_n) := N(e+ \frac{1}{t_1} + ... + \frac{1}{t_n})$. Those links were introduced by Montesinos in \cite{montesinos1973familia}. 

Let $t = \frac{\alpha}{\beta} $  be a rational number with $\beta > 0$. The \textit{floor} of $t$ is $ \lfloor t \rfloor = max \left\lbrace x \in \Z / x \leq t  \right\rbrace ,$ and the \textit{fractional part} of $t$ is $ \left\lbrace t \right\rbrace = t - \lfloor t \rfloor < 1.$ For $t \neq 1$, define $\hat{t} = \frac{1}{ \left\lbrace \frac{1}{t} \right\rbrace } > 1.$ We also put 
$\left( \frac{\alpha}{\beta} \right)^f = \begin{cases}
\frac{\alpha}{\beta - \alpha} &\text{ if } \frac{\alpha}{\beta} > 0 \\
\frac{\alpha}{\beta + \alpha} &\text{ if } \frac{\alpha}{\beta} < 0
\end{cases}$

Let $L$ be the Montesinos link $M(e;t_1,...,t_n)$. We define $ \varepsilon(L) = e + \displaystyle \sum_{i=1}^n \lfloor \frac{1}{t_i} \rfloor$. The link $L$ is isotopic to $M( \varepsilon(L) ; \hat{t_1}, ... , \hat{t_n})$ (Proposition 3.2 , \citep{champanerkar2015note}). The link $M( \varepsilon(L) ; \hat{t_1}, ... , \hat{t_n})$ is called the \textit{reduced form} of the Montesinos link $L = M(e;t_1,...,t_n)$. 

A complete classification of quasi-alternating Montesinos links is given in \citep{issa2017classification}.

\begin{rmq}
Any Montesinos link is either alternating or almost alternating \cite{abe2010dealternating}.
\end{rmq}

\begin{theo}[Theorem 1, \cite{issa2017classification}]
Let $L=M(e;t_1,...,t_n)$ be a Montesinos link and $M( \varepsilon(L) ; \hat{t_1},...,\hat{t_n})$ its reduced form. Then $L$ is quasi-alternating if and only if 
\begin{enumerate}
\item $\varepsilon(L) > -1$, or
\item $\varepsilon(L) = -1$ and $ \left| \hat{t_i}^f \right| > \hat{t_j}$ for some $i \neq j$, or
\item $\varepsilon(L) < 1-n$, or
\item $\varepsilon(L) = 1-n$ and $ \left| \hat{t_i}^f \right| < \hat{t_j}$ for some $i \neq j$.
\end{enumerate}
\label{th4.1}
\end{theo}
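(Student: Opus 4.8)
The plan is to pass to the reduced form, translate the four conditions into one clean arithmetic statement, and then prove the two implications separately, using mirror symmetry to halve the casework. \emph{Normalisation.} First I would replace $L$ by its reduced form $M(\varepsilon(L);\hat{t_1},\dots,\hat{t_n})$, so that each $\hat{t_i}>1$, and set $x_i:=1/\hat{t_i}\in(0,1)$. A one‑line computation from the definition of $(\cdot)^f$ gives $|\hat{t_i}^f|>\hat{t_j}\Leftrightarrow x_i+x_j>1$ and $|\hat{t_i}^f|<\hat{t_j}\Leftrightarrow x_i+x_j<1$, so conditions (1)--(4) become: $\varepsilon(L)\ge 0$; or $\varepsilon(L)=-1$ and the two largest $x_i$ sum to more than $1$; or $\varepsilon(L)<1-n$; or $\varepsilon(L)=1-n$ and the two smallest $x_i$ sum to less than $1$. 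The mirror image of $L$ is again a reduced Montesinos link, with $\varepsilon(L)$ replaced by $-\varepsilon(L)-n$ and each $x_i$ replaced by $1-x_i$; this swaps (1) with (3) and (2) with (4), and quasi‑alternateness is mirror‑invariant. So after mirroring it suffices to treat: (a) $\varepsilon(L)\ge 0$; (b) $\varepsilon(L)=-1$ with the two largest $x_i$ summing to $>1$; and, for the converse, (c) $\varepsilon(L)=-1$ with the two largest $x_i$ summing to $\le 1$, or $1-n<\varepsilon(L)<-1$. By Remark \ref{rk3}, and since a connected sum is quasi‑alternating iff each summand is while alternating summands are always quasi‑alternating, I may also assume the standard diagram is locally unknotted.

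\emph{Sufficiency.} In case (a) the standard diagram of $M(\varepsilon(L);\hat{t_1},\dots,\hat{t_n})$ is alternating, and iterating Proposition \ref{prop3.2} gives $\det(L)=\big(\prod_i\mathrm{num}(\hat{t_i})\big)\big(\varepsilon(L)+\sum_i x_i\big)\ne 0$; hence $L$ is non‑split and therefore quasi‑alternating by \cite{ozsvath2005heegaard}. In case (b), order the indices so that $x_1\ge\cdots\ge x_n$ and $x_1+x_2>1$. Using $-1+x_1=-(1-x_1)$ with $0<1-x_1<x_2$, I would rewrite $L=N\big(-(1-x_1)+T\big)$ with $T=1/\hat{t_2}+\dots+1/\hat{t_n}$ a strongly alternating tangle; iterating Proposition \ref{prop3.2} gives $\det(n(T))/\det(d(T))=\sum_{k\ge 2}x_k$, and since $\sum_{k\ge 2}x_k\ge x_2>1-x_1$ Proposition \ref{prop3.2} again shows $\det(L)\ne 0$. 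Now I build $L$ up from a small quasi‑alternating link: $N(-1+x_1+x_2)=N(-(1-x_1)+1/\hat{t_2})$ is a $2$‑bridge link whose fraction $-1+x_1+x_2$ is positive, hence it is alternating and, being of nonzero determinant, non‑split, hence quasi‑alternating; and $L$ is obtained from it by reinstating the twist regions $1/\hat{t_3},\dots,1/\hat{t_n}$ one at a time, each step being a rational extension of a crossing that the determinant identities of Propositions \ref{prop3.1}--\ref{prop3.2}, together with the inequality above, certify is quasi‑alternating in the smaller diagram, so Theorem \ref{th1.1} applies at each stage. Alternatively one can organise (b) directly through Theorem \ref{th3.2}, exhibiting $L$ as a dealternator extension of an almost‑alternating diagram $n(-1+T')$ of a quasi‑alternating link with $\det(n(T'))>\det(d(T'))$.

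\emph{Necessity.} Now assume $L$ is of none of the types (1)--(4); after mirroring, $\varepsilon(L)\ge-1$. Suppose first $\varepsilon(L)=-1$ with the two largest $x_i$ summing to $\le 1$. If in fact $\sum_i x_i\le 1$, then in the splitting $L=N(-(1-x_1)+T)$ above we have $\det(n(T))/\det(d(T))=\sum_{k\ge 2}x_k\le 1-x_1$, so Proposition \ref{prop3.3} shows $L$ is not quasi‑alternating (the equality case $\sum_i x_i=1$ being $\det(L)=0$, excluded by Corollary \ref{cor1}). If instead $\sum_i x_i>1$, Proposition \ref{prop3.3} does not apply; here I would instead show that the standard diagram of $L$ is adequate---concretely, that it contains two strongly alternating tangles of opposite type summed together, a semi‑alternating configuration---so that its Khovanov homology is thick and $L$ is not quasi‑alternating by Proposition 7 of \cite{khovanov2003patterns}. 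The same adequacy argument then covers the remaining range $1-n<\varepsilon(L)<-1$ (which forces $n\ge 4$): the $|\varepsilon(L)|$ negative crossings together with the positive twist regions again realise $L$ as an adequate link. In these residual cases one may alternatively invoke the classification of Seifert fibered $L$‑spaces applied to the branched double cover $\Sigma_2(L)$, whose failure to be an $L$‑space would contradict \cite{ozsvath2005heegaard}.

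\emph{Where the difficulty lies.} Everything except two points is bookkeeping once the translation $|\hat{t_i}^f|\gtrless\hat{t_j}\Leftrightarrow x_i+x_j\gtrless 1$ is noted. The first hard point is, in sufficiency, checking that reinstating each twist region really is a rational extension at a \emph{quasi‑alternating} crossing, i.e. that the determinant splits at every intermediate stage; this is a finite but delicate induction resting on Propositions \ref{prop3.1}--\ref{prop3.2}. The second, and the genuine obstacle, is the necessity case $\varepsilon(L)=-1$ with $\sum_i x_i>1$ but no admissible pair, together with the interior range $1-n<\varepsilon(L)<-1$, where the obstruction is not a determinant identity but adequacy / thickness of Khovanov homology (equivalently, the branched double cover failing to be an $L$‑space); isolating the precise semi‑alternating sub‑configuration inside a general Montesinos diagram and proving adequacy there is the step most naturally imported from outside the present framework.
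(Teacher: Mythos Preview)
The paper does not prove this theorem: it is quoted verbatim from \cite{issa2017classification} and used as a black box (for instance in Corollary \ref{mycor3} and in the proof of Theorem 4.2). So there is no ``paper's own proof'' to compare against.

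That said, your sketch deserves a couple of remarks. The sufficiency direction is essentially what the paper does establish, in the guise of Theorem 4.2: every non-alternating quasi-alternating Montesinos link is built from a two-tangle seed by a sequence of dealternator extensions and rational extensions. Your first formulation of case (b), however, is not quite right as stated: ``reinstating the twist region $1/\hat t_k$'' is \emph{not} a rational extension of a crossing in the smaller diagram, since a rational extension replaces one crossing by a rational tangle, whereas here you are inserting an entirely new summand. The paper's trick (and your own ``alternative'' sentence) fixes this: extend the dealternator $-1$ to $-\tfrac12$, rewrite $-\tfrac12=-1+\tfrac12$, and then rationally extend one crossing of the new $\tfrac12$ to absorb $1/\hat t_k$. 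Without that intermediate dealternator step, Theorem \ref{th1.1} does not apply.

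The necessity direction has a genuine gap. Your Proposition \ref{prop3.3} argument handles $\sum_i x_i\le 1$ cleanly, but for $\varepsilon(L)=-1$ with $\sum_i x_i>1$ and no admissible pair (and for the interior range $1-n<\varepsilon(L)<-1$) you assert that the standard diagram is adequate, indeed semi-alternating. This is not justified: the reduced diagram $n(-1+\sum_i 1/\hat t_i)$ is almost alternating with a single dealternator, and almost alternating diagrams are typically \emph{not} adequate; nor do you exhibit the claimed decomposition into two strongly alternating tangles of opposite type. Issa's actual proof of necessity goes through the branched double cover and the classification of Seifert fibered $L$-spaces, which you mention only as a fallback. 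That is not a detail one can wave at; it is the substance of the hard direction, and nothing in the present paper supplies it.
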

By Theorem 10 in \citep{lickorish1988some}, the only non-alternating and quasi-alternating Montesinos links are those with a reduced form $M( \varepsilon ; \hat{t_1},...,\hat{t_n})$ where $\varepsilon = -1$ and $ \left| \hat{t_i}^f \right| > \hat{t_j}$ for some $i \neq j$, and their reflections. We will show in the following theorem that the latter are almost alternating links which can be constructed iteratively by using the technique developed in Theorem \ref{th3.2}.

\begin{theo}
  Each non-alternating quasi-alternating Montesinos link can be obtained from some Montesinos link $M(-1,\frac{\lambda}{\gamma},\frac{\eta}{\delta})$ by a finite sequence of dealternator extensions, isotopies, rational extensions and likely flype moves.
\end{theo}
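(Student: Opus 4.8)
The plan is to run an induction on the number of rational tangles, feeding the classification of quasi-alternating Montesinos links into the dealternator-extension construction of Theorem \ref{th3.2} and Champanerkar--Kofman's rational-extension theorem (Theorem \ref{th1.1}). By Theorem \ref{th4.1} together with Theorem 10 of \cite{lickorish1988some} (as recalled just before the statement), every non-alternating quasi-alternating Montesinos link is, up to mirror image, $L=M(-1;\hat{t_1},\dots,\hat{t_n})$ in reduced form, with all $\hat{t_i}>1$, with $n\geq 3$ (a Montesinos link built from at most two rational tangles being rational, hence alternating), and with $|\hat{t_i}^f|>\hat{t_j}$ for some $i\neq j$. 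Since a Montesinos link is unaffected by permuting its tangles and reflection commutes with all the operations in the statement, I would reorder the tangles so that the realizing pair is $\{1,2\}$ and treat only the $e=-1$ representative. The elementary computation driving everything is that, writing $\hat{t_i}=p_i/q_i$ in lowest terms, the inequality $|\hat{t_i}^f|>\hat{t_j}$ unwinds to $\frac1{\hat{t_i}}+\frac1{\hat{t_j}}>1$; in particular $\frac1{\hat{t_1}}+\frac1{\hat{t_2}}>1$, and a fortiori $\sum_{i=1}^{k}\frac1{\hat{t_i}}>1$ for every $2\leq k\leq n$.

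For the induction I set $T_k:=\frac1{\hat{t_1}}+\dots+\frac1{\hat{t_k}}$ and $L_k:=M(-1;\hat{t_1},\dots,\hat{t_k})=N(-1+T_k)$ for $2\leq k\leq n$. Each $L_k$ is again in reduced form with $\varepsilon(L_k)=-1$, and condition (2) of Theorem \ref{th4.1} holds for $L_k$ via the surviving pair $\{1,2\}$, so every $L_k$ is quasi-alternating; in particular $L_2=M(-1;\hat{t_1},\hat{t_2})$ is a rational link, which is the starting link $M(-1;\frac\lambda\gamma,\frac\eta\delta)$. The diagram $D_k=n(-1+T_k)$ is almost alternating with the $-1$ crossing as dealternator, and by Proposition \ref{prop3.2}(1) and Lemma \ref{th2.1} an easy induction gives $\det(n(T_k))/\det(d(T_k))=\sum_{i=1}^{k}\frac1{\hat{t_i}}>1$, i.e. $\det(D^{c}_{0})>\det(D^{c}_{\infty})$. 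Hence, by (the proof of) Theorem \ref{th3.2}, the dealternator extension of $D_k$ with fraction $-\frac12$ yields a quasi-alternating diagram $n(-\frac12+T_k)$, in which — by the remark following Theorem \ref{th1.1} — every crossing of the inserted tangle $-\frac12=[0,-2]$ is quasi-alternating. I would then apply Theorem \ref{th1.1} repeatedly to those crossings, extending the $-\frac12$ region into any negative rational tangle extending it; choosing it equal to $-\frac ab=-1+\frac1{\hat{t_{k+1}}}$ with $\frac ab=1-\frac1{\hat{t_{k+1}}}\in(0,1)$ produces a quasi-alternating diagram $n\big(-1+\frac1{\hat{t_{k+1}}}+T_k\big)$, which represents $M(-1;\hat{t_{k+1}},\hat{t_1},\dots,\hat{t_k})=L_{k+1}$ after a flype realizing the tangle permutation. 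Running this from $k=2$ to $k=n-1$, and reflecting in the mirror case, exhibits $L$ as obtained from $M(-1;\frac\lambda\gamma,\frac\eta\delta)$ by a finite sequence of dealternator extensions, rational extensions, isotopies and flypes.

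The real work is in two verifications. First, the determinant bookkeeping: one must establish $\det(n(T_k))/\det(d(T_k))=\sum\frac1{\hat{t_i}}$ from Proposition \ref{prop3.2}(1) and the behaviour of $\det(d(\,\cdot\,))$ under tangle sum, and check that the proof of Theorem \ref{th3.2} does identify the case $\det(D^{c}_{0})>\det(D^{c}_{\infty})$ with the successful $-\frac12$ extension even though here $T_k$ is only alternating, not strongly alternating (this is legitimate, since Theorem \ref{th3.2} requires merely an almost alternating diagram of a quasi-alternating link, and the integral summands $\frac1{\hat{t_i}}$ cause no trouble for that hypothesis). Second — and this is the main obstacle — one must show that every rational tangle $-\frac ab$ with $0<\frac ab<1$, all of whose crossings carry Conway sign $-1$, is reachable from $-\frac12$ by a finite sequence of rational extensions: this amounts to the fact that every such $\frac ab$ has a continued fraction expansion $[0,c_1,\dots,c_r]$ with $c_i\geq1$, together with the observation that enlarging or appending an entry of this expansion corresponds to extending a (necessarily quasi-alternating) crossing of the current tangle, so quasi-alternateness is preserved at each step by Theorem \ref{th1.1} (the reachability of the tangles $-\frac1k$ is already the content of Corollary \ref{cor5}). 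Finally, it is worth noting that the $-2$ branch of Theorem \ref{th3.2} never occurs for the links at hand, precisely because $\sum\frac1{\hat{t_i}}>1$; it is exactly that branch, via the reflected construction, that produces the mirror family with $\varepsilon=1-n$ of condition (4) of Theorem \ref{th4.1}.
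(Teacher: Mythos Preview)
Your approach follows the paper's essential route: start from $M(-1;\hat t_i,\hat t_j)$, verify the determinant inequality that makes Theorem~\ref{th3.2} produce the $-\tfrac12$ dealternator extension, and then insert each remaining rational tangle via Theorem~\ref{th1.1}. There is, however, a genuine gap in your handling of the cyclic order. Your assertion that ``a Montesinos link is unaffected by permuting its tangles'' is false: for $n\geq 3$ the reduced Montesinos link $M(-1;\hat t_1,\dots,\hat t_n)$ is determined by the sequence $(\hat t_1,\dots,\hat t_n)$ only up to cyclic permutation and reversal, not arbitrary permutation. Your explicit procedure always inserts the new tangle immediately next to the dealternator and then applies a cyclic flype, so every link you build has $\hat t_i$ and $\hat t_j$ adjacent in the final cyclic order. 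This fails, for instance, for $M(-1;\tfrac32,10,\tfrac32,10)$, where the only pairs realising condition~(2) of Theorem~\ref{th4.1} are the non-adjacent ones $\{1,3\}$; your construction then yields $M(-1;\tfrac32,\tfrac32,10,10)$, which is a different link.

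The paper closes precisely this gap in its Step~1: before each dealternator extension it flypes the dealternator past some of the already-present rational tangles, so that the new tangle can be inserted at \emph{any} position in the cyclic word. Once you incorporate this move your argument coincides with the paper's. A smaller point: your reachability claim, that every $-\tfrac ab$ with $0<\tfrac ab<1$ is obtainable from $-\tfrac12$ by ``enlarging or appending an entry'' of its continued fraction, needs care when the first partial quotient equals~$1$. The paper sidesteps this by first rewriting $-\tfrac12$ as $-1+\tfrac12$ (an isotopy), checking that the two crossings $\tau_1,\tau_2$ of the positive $\tfrac12$ are quasi-alternating, and then extending $\tau_1$ by the positive tangle $\tfrac{t}{1-t}$, using $\tfrac{t}{1-t}*1=t$ to turn $\tfrac12$ into the desired $t=\tfrac{\beta_k}{\alpha_k}$.
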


\begin{proof}
Let $L$ be a non-alternating quasi-alternating Montesions link. Let $M(\varepsilon ; \frac{\alpha_1}{\beta_1},...,\frac{\alpha_n}{\beta_n})$ be its reduced form. Since $L$ is non-alternating, then $n \geq 3$ as proved in Proposition 3.1 in \cite{champanerkar2015note}. Furthermore, as the link $L$ is quasi-alternating, then  by Theorem \ref{th4.1}  $\varepsilon=-1$ and there exist $i \in [\![1,n]\!]$ and $j \in [\![1,n]\!] \setminus \lbrace i \rbrace$ such that $ \displaystyle \left|  \left( \frac{\alpha_i}{\beta_i} \right)^f \right| > \frac{\alpha_j}{\beta_j}$. Note that we may have $\varepsilon=1-n$. But the latter case corresponds to the mirror image of the former one and then we can restrict ourselves to $\varepsilon=-1$. For convenience, let us denote $\frac{\alpha_i}{\beta_i} = \frac{\lambda}{\gamma}$ and $\frac{\alpha_j}{\beta_j}=\frac{\eta}{\delta}$ and we suppose without loss of generality that $i < j$.\\
Let $D_1$ be the almost alternating link diagram $n \left(-1 + \frac{\gamma}{\lambda} + \frac{\delta}{\eta} \right)$. Let $c_1$ be its dealternator. Note that the link $\mathcal{L}(D_1)$ is the alternating Montesinos link $M \left( -1, \frac{\lambda}{\gamma},\frac{\eta}{\delta} \right)$. By Proposition 4.1 in \cite{champanerkar2015note}, we have $$\det(D_1) = \displaystyle \left| \lambda \eta \left( -1 + \frac{\gamma}{\lambda} + \frac{\delta}{\eta} \right) \right|.$$
Since $ \left|\left(\frac{\lambda}{\gamma}\right)^f \right| > \frac{\eta}{\delta}$, which is equivalent to $\frac{\gamma}{\lambda} + \frac{\delta}{\eta} > 1$, then $\det(D_1) \neq 0$. This shows that $\mathcal{L}(D_1)$ is non-split, hence it is quasi-alternating.\\

Now, starting from $D_1$, we will build the link $L$ by using a finite sequence of operations which preserve the property of being quasi-alternating. Let $k \in [\![1,n]\!] \setminus \lbrace i,j \rbrace$ be a fixed integer.\\
\noindent\underline{Step 1}: If $i>1$ and $k< i$, this step will be skipped. If not, according to whether $i < k < j$ or $j<k$, we use flype moves to slide the dealternator $c_1$ in order to put it between the tangles $\frac{\gamma}{\lambda}$ and $\frac{\delta}{\eta}$ or to the right of the tangle $\frac{\delta}{\eta}$. So we get a new diagram $D_2$ which is equivalent to $D_1$ and is almost alternating. Let $c_2$ denote its dealternator.\\
\noindent\underline{Step 2}: It is easy to see that $\mathcal{L}\left(\left(D_2\right)^{c_2}_{0}\right) = M \left(0;\frac{\lambda}{\gamma},\frac{\eta}{\delta} \right)$ and $\mathcal{L}\left(\left(D_2\right)^{c_2}_{\infty} \right) = (\frac{\gamma}{\lambda}) \# (\frac{\delta}{\eta})$. On the other hand, we have $\det\left(M \left(0;\frac{\lambda}{\gamma},\frac{\eta}{\delta}\right)\right)= \lambda \eta \left(\frac{\gamma}{\lambda} + \frac{\delta}{\eta}\right)$ and $\det\left(d\left(\frac{\gamma}{\lambda}\right) \# d\left( \frac{\delta}{\eta}\right)\right)=\lambda \eta$.  Then $\det\left(\left(D_2\right)^{c_2}_{0}\right) > \det \left(\left(D_2\right)^{c_2}_{\infty}\right)$. Hence by Theorem \ref{th3.2}, the diagram $(D_2)^{c_2\leftarrow-\frac{1}{2}}$ is a quasi-alternating link diagram at each crossing of the extension tangle $-\frac{1}{2}$.\\
\noindent\underline{Step 3}: Now since $-\frac{1}{2}= -1 + \frac{1}{2}$, then we may replace the tangle $-\frac{1}{2}$ in $(D_2)^{c_2\leftarrow-\frac{1}{2}}$ with the tangle $-1 + \frac{1}{2}$ to get an equivalent almost alternating diagram $D_3$ which is either $n\left( -1 + \frac{1}{2} + \frac{\gamma}{\lambda} + \frac{\delta}{\eta} \right)$, or $n\left( \frac{\gamma}{\lambda} + (-1 + \frac{1}{2}) + \frac{\delta}{\eta} \right)$, or $n\left(\frac{\gamma}{\lambda} + \frac{\delta}{\eta} + (-1 + \frac{1}{2}) \right)$ depending on the location of $k$ as mentioned in the first step. One can easily check that $D_3$ is quasi-alternating at each crossing of the tangle $\frac{1}{2}$. Let $c_3$ be the dealternator of $D_3$. By using flype moves if needed, we can assume that $c_3$ is the first rational tangle on the left in the rational parametrization of $D_3$. Note that it is easy to check that $D_3$ is quasi-alternating at each of the crossings $\tau_1$ and $\tau_2$ of the tangle $\frac{1}{2}$.\\
\noindent\underline{Step 4}: Now we are ready to insert the rational tangle $t= \frac{\beta_k}{\alpha_k}$. By Theorem \ref{th1.1} $\displaystyle \left(D_3 \right)^{\tau_1 \leftarrow \frac{t}{1-t}}$ is quasi-alternating at each crossing of the tangle $\frac{t}{1-t}$ and also at $\tau_2$.
Now since $\frac{t}{1-t} * 1 = t$, then we may consider $\tau_2$ as a crossing of the tangle $t$.
Finally we built one of the following links: 
$M \left( -1; \frac{\alpha_k}{\beta_k}, \frac{\lambda}{\gamma}, \frac{\eta}{\delta} \right)$, or $M \left( -1; \frac{\lambda}{\gamma}, \frac{\alpha_k}{\beta_k}, \frac{\eta}{\delta} \right)$, or $M \left( -1; \frac{\eta}{\delta}, \frac{\lambda}{\gamma}, \frac{\alpha_k}{\beta_k} \right)$ depending on the initial position of $t=\frac{\alpha_k}{\beta_k}$ in the rational parametrization of $L$. 
This ends what we call the first loop of the building process. The next loop will start with the ouput of the last one and will allow the insertion of a new tangle $\frac{\alpha_h}{\beta_h}$. The same arguments used in the four last steps work again.\\
Then after $n-2$ loops we will get the wanted rational parametrisation of $L$.

\begin{figure}[H]
\centering
\includegraphics[width=0.9\linewidth]{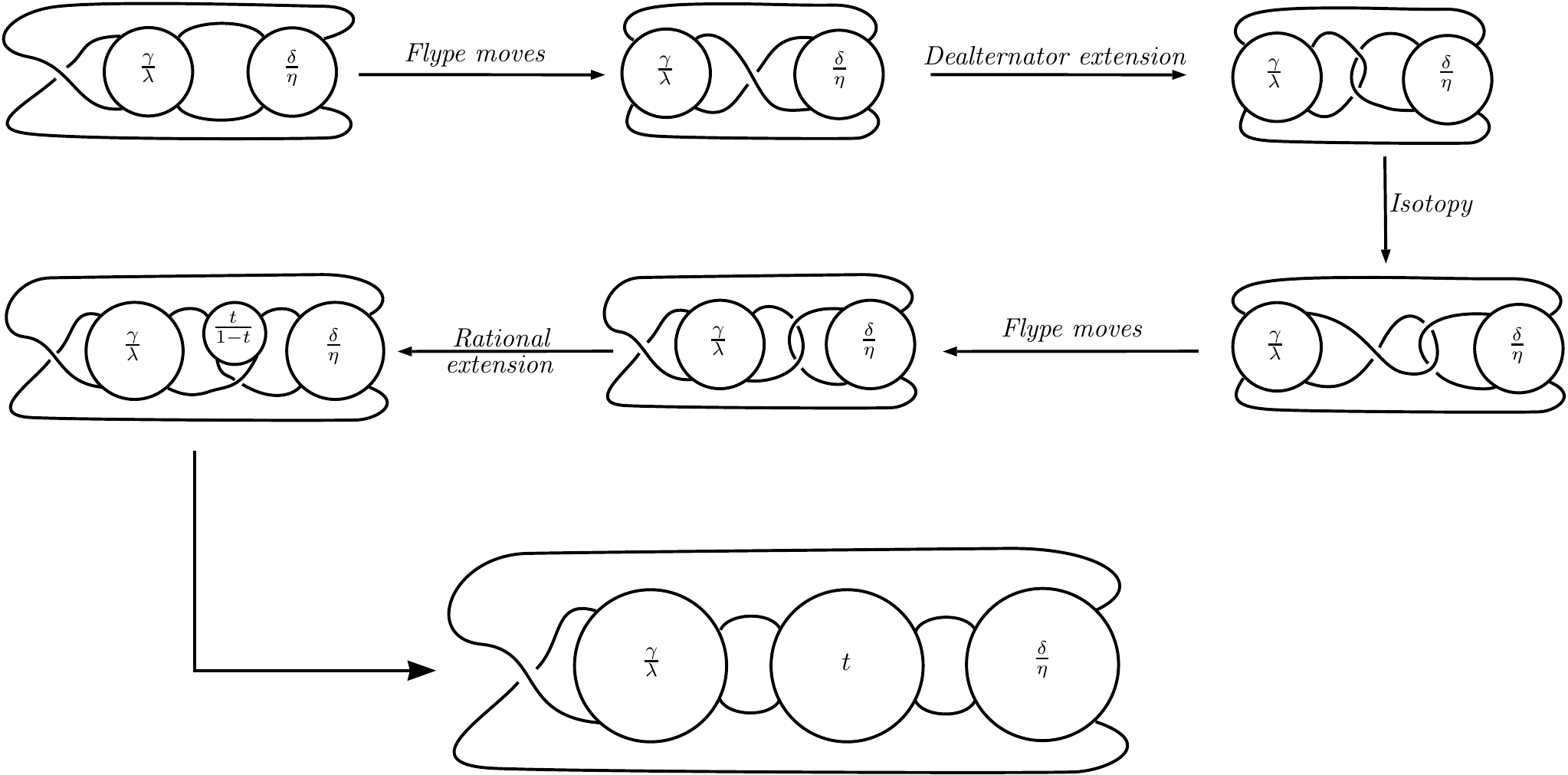}  
\caption{An illustrative diagram of the first iteration where $i<k<j$.}
\label{fig11}
\end{figure}

\end{proof}

\subsection{Generating almost alternating and quasi-alternating links}
Recall the following definitions. A link $L\subset S^3$, other than the unknot, is \textit{prime} if every $2$-sphere in $S^3$ that intersects $L$ transversely at two points bounds on one side of it, a ball that intersects $L$ in precisely one unknotted arc. A diagram $D\subset S^2$, of a link other than the unknot, is a \textit{prime} diagram if any simple closed curve in $S^2$ that meets $D$ transversely at two points bounds, on one side of it, a disk that intersects $D$ in a diagram $U$ of the unknotted ball-arc pair.\\
We will use the Kauffman polynomial $\Lambda_L(a,z)\in\Z[a,a^{-1},z,z^{-1}]$ which is an invariant of regular isotopy for unoriented link diagrams $L$. It satisfies the following relations 

\begin{enumerate}
\item $\Lambda\textsubscript{\includegraphics[scale=0.04]{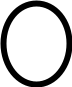}}=1$.
\item $\Lambda\textsubscript{\includegraphics[scale=0.04]{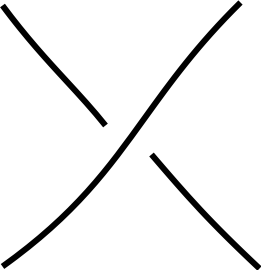}}+\Lambda\textsubscript{\includegraphics[scale=0.04]{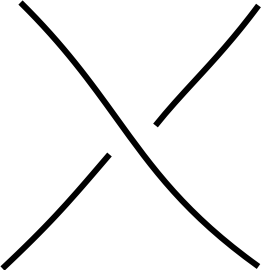}}=z(\Lambda\textsubscript{\includegraphics[scale=0.04]{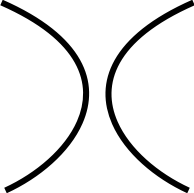}}+\Lambda\textsubscript{\includegraphics[scale=0.04]{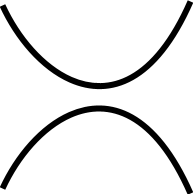}})$.
\item $\Lambda\textsubscript{\includegraphics[scale=0.04]{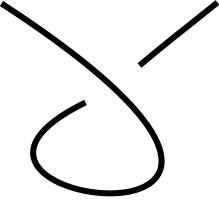}}=a\Lambda\textsubscript{\includegraphics[scale=0.04]{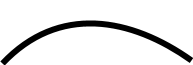}}$.
\item $\Lambda\textsubscript{\includegraphics[scale=0.04]{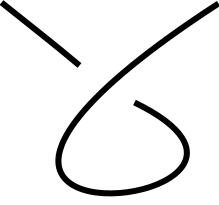}}=a^{-1}\Lambda\textsubscript{\includegraphics[scale=0.04]{ta.png}}$.

\end{enumerate}

If $L$ is a link or a link diagram, then $c(L)$ is the crossing number of $L$.
\begin{prop} Let $T$ be a strongly alternating tangle diagram such that $n(T)$ is prime. If $t$ is a rational tangle diagram such that $0<t<1$ and $\det (N(-t+T))\ne 0$, then the link $N\left( -t + T \right)$ is almost alternating.
\label{prop4.3}
\end{prop}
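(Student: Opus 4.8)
The plan is to show that $N(-t+T)$ admits an almost alternating diagram and then rule out the possibility that it is alternating. For the first part, write $t = \frac{a}{b}$ with $0<\frac ab<1$, so $t = [0,a_1,\dots,a_k]$ with all $a_i\ge 1$, and observe that $-t = -1 + (1-t)$ with $0 < 1-t < 1$. Hence $n(-t+T)$ is isotopic to $n\bigl(-1 + (1-t) + T\bigr)$. Since $1-t$ is a positive rational tangle and $T$ is alternating of type 1, the tangle $(1-t)+T$ is an alternating connected tangle diagram (concatenating an alternating rational tangle of the appropriate type on the left of a type 1 alternating tangle keeps alternation and connectedness — this is the standard ``clasp'' picture). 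Therefore $n\bigl(-1 + (1-t)+T\bigr)$ is a diagram with exactly one crossing (the dealternator, coming from the $-1$) whose change produces the alternating diagram $n\bigl(1 + (1-t)+T\bigr)$. So $N(-t+T)$ has an almost alternating diagram.

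It remains to prove that $N(-t+T)$ is \emph{not} alternating, since ``almost alternating'' by definition excludes alternating links. First, the hypothesis $\det(N(-t+T))\ne 0$ together with Proposition \ref{prop3.1} (or Proposition \ref{prop3.2}) guarantees the link is non-split, so we are in a setting where we may compare it with genuinely alternating links. The natural tool is the Kauffman polynomial and its relation to the crossing number: by Thistlethwaite's results, an alternating (indeed adequate) diagram realizes the crossing number and the breadth of $\Lambda_L$ in the $a$-variable detects it. I would compute, or bound, the $a$-breadth of $\Lambda_{N(-t+T)}$ using the skein relations (2)--(4) above applied at the dealternator crossing: smoothing there expresses $\Lambda_{N(-t+T)}$ in terms of $\Lambda$ of the two alternating links $N((1-t)+T)$ and of the split/connected-sum closure $d(1-t)\#d(T)$ (up to the $a^{\pm1}$ twist factors), both of which are alternating with known $a$-breadth. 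Comparing breadths shows that $N(-t+T)$ cannot be alternating on the given almost alternating diagram's crossing number, and since that diagram is, after reduction, minimal (here the primality of $n(T)$ and strong alternation of $T$ are used to see the almost alternating diagram is dealternator-connected, reduced and has no nugatory crossings, cf. the Remark following the definition of dealternator connected), the link is genuinely almost alternating.

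The main obstacle is the last point: showing the constructed diagram is actually minimal-crossing among all almost alternating diagrams, equivalently that the link is not secretly alternating. The cheap route — invoking that almost alternating links have thick enough invariants — is not available in general, so I expect the real work is a Kauffman-polynomial breadth computation, where the hypotheses ``$T$ strongly alternating'' and ``$n(T)$ prime'' are exactly what make the closures appearing after the skein move reduced alternating diagrams whose breadths are computed by Thistlethwaite's theorem; the inequality $\det(N(-t+T))\ne 0$ then prevents the degenerate case where the two contributions cancel. I would also use Lemma \ref{lem2.2} to control primality/local knottedness of the relevant closures, and Remark \ref{rk3} to reduce to the locally unknotted case so that the crossing-number count is clean.
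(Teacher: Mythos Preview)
Your construction of an almost alternating diagram is essentially the paper's: writing $-t=-1+(1-t)$ and noting that $(1-t)+T$ is alternating gives the required diagram (the paper writes the same tangle as $\frac{1-t}{t}*1$, which has the same fraction but makes the crossing count transparent). This part is fine.

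The gap is in the second half. You correctly identify that ruling out alternating is the crux, but your $a$-breadth sketch is not a proof: you neither state the precise breadth bounds you need nor verify them, and the skein relation for $\Lambda$ at the dealternator involves \emph{four} terms (both crossing signs and both smoothings), not just the two alternating closures you list. More importantly, the paper does \emph{not} argue via the $a$-breadth at all. Its mechanism is a mismatch between two invariants:
\begin{itemize}
\item[(i)] A separate lemma (proved by a double induction on the structure of $t$, using Thistlethwaite's result that $\deg_z\Lambda=c-1$ for prime reduced alternating diagrams --- this is exactly where primality of $n(T)$ enters) establishes $\deg_z\Lambda_{n(-t+T)}=c(n(-t+T))-2$.
\item[(ii)] If $N(-t+T)$ were alternating, the hypothesis $\det\ne 0$ gives a connected reduced alternating diagram $D$ with $Br\langle D\rangle=4c(D)$. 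But the equivalent dealternator-connected, dealternator-reduced diagram $n(-1+(\frac{1-t}{t}*1)+T)$ has $c(n(-t+T))+1$ crossings, and the Adams et al.\ bound for such diagrams forces $c(D)\le c(n(-t+T))-2$.
\end{itemize}
Combining (i) with the general bound $\deg_z\Lambda_D\le c(D)-1$ yields $c(n(-t+T))-2\le c(D)-1$, contradicting (ii). So the hypotheses are used as follows: strong alternation of $T$ makes the almost alternating diagram dealternator connected and reduced (needed for the Adams et al.\ bracket bound); primality of $n(T)$ is needed to start the $z$-degree induction; and $\det\ne 0$ ensures the hypothetical alternating diagram is connected and reduced. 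Your invocations of Lemma~\ref{lem2.2} and Remark~\ref{rk3} are not needed here, and the issue is never ``minimality among almost alternating diagrams'' --- it is purely that no alternating diagram can exist.
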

To prove the proposition we will need the following lemma.
\begin{lem}
Let $T$ be is a strongly alternating tangle diagram such that $n(T)$ is prime. If $t$ is a rational tangle diagram such that $0<t<1$, then
$$\deg_z\Lambda_{n(-t+T)}(a,z) = c\left( n(-t+T) \right) - 2.$$
\label{lem4.4}
\end{lem}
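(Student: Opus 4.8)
The plan is to compute the $z$-degree from the Kauffman skein relation $\Lambda_{L_+}+\Lambda_{L_-}=z\bigl(\Lambda_{L_0}+\Lambda_{L_\infty}\bigr)$ applied at the dealternator $c$ of $D:=n(-t+T)$, comparing $\Lambda_D$ with the alternating diagrams obtained by smoothing and by changing $c$, and then pinning down the extreme $z$-degree by a cancellation argument. Throughout I use the classical fact (Thistlethwaite \cite{thistlethwaite1987spanning}; see also \cite{lickorish1988some}) that a connected reduced alternating diagram on $m$ crossings has Kauffman polynomial of $z$-degree exactly $m-1$, while for an arbitrary connected diagram on $m$ crossings the $z$-degree is at most $m-1$.

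First I would set up the skein triple. Rotating $D$ if necessary, $c$ is the tangle $-1$, so $D=L_-$; changing $c$ gives an alternating diagram $D'=L_+$ with $c(D')=c(D)$, and the two smoothings $D^{c}_{0},D^{c}_{\infty}$ at $c$ are alternating diagrams on $c(D)-1$ crossings. That $n(T)$ is prime and $T$ is strongly alternating is what lets one control these closures: using Lemma \ref{lem2.2} together with the relations between $G_n(T)$, $G_d(T)$ and the dual of $G_n(T)$ recalled in Section~2, each of $D'$, $D^{c}_{0}$, $D^{c}_{\infty}$ is connected and reduces to a prime alternating diagram, so the degree formula applies to it. The skein relation then reads
\begin{align*}
\Lambda_{D}\;=\;z\bigl(\Lambda_{D^{c}_{0}}+\Lambda_{D^{c}_{\infty}}\bigr)\;-\;\Lambda_{D'},
\end{align*}
and the quoted facts give $\deg_z\Lambda_{D'}=c(D)-1$ and $\deg_z\Lambda_{D^{c}_{0}},\deg_z\Lambda_{D^{c}_{\infty}}\le c(D)-2$. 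Hence every term on the right has $z$-degree at most $c(D)-1$, so $\deg_z\Lambda_D\le c(D)-1$.

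The crux is to improve this to the equality $\deg_z\Lambda_D=c(D)-2$, that is, to show that the $z^{c(D)-1}$-coefficients on the right-hand side cancel while the coefficient of $z^{c(D)-2}$ in $\Lambda_D$ does not vanish. For the cancellation I would use Thistlethwaite's description of the extreme $z$-degree part of the Kauffman polynomial of a reduced alternating diagram as a non-zero Laurent polynomial in $a$ determined, up to a writhe-dependent power of $a$, by the Tait graph. Since $D'$, $D^{c}_{0}$ and $D^{c}_{\infty}$ arise from $D$ by flipping the sign of, deleting, and contracting the edge $e_c$ corresponding to $c$ — the same operations used in the proof of Proposition \ref{prop3.1} — their Tait graphs, and hence their leading Kauffman coefficients, satisfy a single identity, which forces the $z^{c(D)-1}$-coefficient of $z\bigl(\Lambda_{D^{c}_{0}}+\Lambda_{D^{c}_{\infty}}\bigr)$ to equal that of $\Lambda_{D'}$; those terms therefore cancel in $\Lambda_D$. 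For the surviving coefficient I would read off the $z^{c(D)-2}$-term of $\Lambda_D$ from the same skein identity, expressed through the first two Kauffman coefficients of the alternating closures — again quantities governed by the Tait graphs — and check that it is non-zero; this is where the hypotheses $0<t<1$, $T$ strongly alternating, and $n(T)$ prime are fully used. Combining the two bounds gives $\deg_z\Lambda_D=c(D)-2=c\bigl(n(-t+T)\bigr)-2$.

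I expect the main obstacle to be exactly this extreme-degree bookkeeping — proving both that the would-be leading term at $z^{c(D)-1}$ cancels and that the next coefficient is genuinely non-zero — since the crude degree count yields only the upper bound $c(D)-1$, and $D$, being almost alternating, is adequate in neither of its Kauffman states, so no adequacy shortcut is available. A more computational alternative is an induction on the number of terms, and then on the sizes of the entries, of the continued fraction of $t$: at each step one peels a single crossing off the rational tangle with the skein relation, the smoothings being a smaller instance of the same situation or a connected sum of alternating links, and one propagates both the degree equality and the non-vanishing of the leading coefficient along the recursion, the base cases $t=\tfrac1p$ being treated by applying the skein relation along the twist region $[0,-p]$.
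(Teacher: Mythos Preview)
Your main argument rests on a false premise: the diagram $D=n(-t+T)$ is \emph{not} almost alternating. Since $0<t<1$, the standard diagram of the rational tangle $-t$ carries at least two crossings, all of Conway sign opposite to those of $T$ (equivalently, $-t$ is an alternating tangle of type~2 while $T$ is of type~1). Every edge of the Tait graph of $D$ coming from $-t$ therefore has the wrong sign, so no single crossing change makes $D$ alternating and there is no dealternator $c$ at which to run your skein computation. If instead you pass to the equivalent, genuinely almost alternating diagram $\widetilde D=n\bigl(-1+(\tfrac{1-t}{t}*1)+T\bigr)$, note that $c(\widetilde D)=c(D)+1$; after the skein relation at its dealternator you would then need the top \emph{two} $z$-coefficients to cancel to reach $c(D)-2$, and your proposed Tait-graph identity for leading Kauffman coefficients (itself only asserted, not proved) would at best handle one.

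The ``more computational alternative'' you sketch at the end is precisely the paper's proof, and it works because it never requires any cancellation. One runs a double induction on the continued fraction $t=[0,a_1,\dots,a_k]$, applying the skein relation at a crossing of the innermost integer block; in the base case $t=\tfrac1p$, $p=2$, this gives
\[
\Lambda_{n(-\frac12+T)}\;=\;za\,\Lambda_{n(T)}\;+\;z\,\Lambda_{n(-1+T)}\;-\;\Lambda_{d(T)},
\]
and the term $za\,\Lambda_{n(T)}$ has strictly larger $z$-degree than the other two (primality of $n(T)$ gives $\deg_z\Lambda_{n(T)}=c(n(T))-1$ exactly, via Thistlethwaite), so the degree is read off directly. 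Every inductive step has the same structure: one summand dominates and no extreme-coefficient bookkeeping is needed.
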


\begin{proof}
We will prove the lemma by induction on the number $k$ of the integer tangles consisting the diagram $t$.\\
\underline{First step}: If $k=1$, since $t<1$ then $t=\frac{1}{p}$ for some integer $p\geq2$. Then $n(-t+T) = n(-\frac{1}{p} + T)$. To prove the result, we will do another induction on $p$: if $p=2$, we apply the skein relation satisfied by the polynomial $L$ at the top crossing of the tangle $-\dfrac{1}{2}$, then we get
$$\Lambda_{n(-\frac{1}{2} + T)} = za\Lambda_{n(T)}+z\Lambda_{n(-1+T)}-\Lambda_{d(T)}.$$
By Theorem 4 in \cite{thistlethwaite1988kauffman}, we have $\deg_z\Lambda_{n(T)} \le c(n(T))-1$. Since $n(T)$ is prime, according to the discussion which follows from Theorem 5 in \cite{thistlethwaite1988kauffman}, we have
$$\deg_z\Lambda_{n(T)} = c(n(T))-1.$$
By applying once again Theorem 4 cited above, we have also $$\deg_z\Lambda_{n(-1+T)}\leq c(n(-1+T))-3=c(n(T))-2\text{, and }\deg_z\Lambda_{d(T)} \leq c(d(T))-1.$$
Furthermore we know that $c(n(T)) = c(d(T))$. Finally  we get that 
\begin{align*}
\deg_z\Lambda_{n(-\frac{1}{2} + T)} &= \deg_z \left(za\Lambda_{n(T)}\right) \\
&= 1 + c(n(T))-1 = c(n(-\frac{1}{2} + T))-2.
\end{align*}
Now if the result holds up to an integer $p$, $p\ge 2$, by applying a new time the skein relation at the top crossing of the integer tangle $-\dfrac{1}{p+1}$ we obtain that $\deg_z\Lambda_{n(-\frac{1}{p+1} + T)}=c(n(-\frac{1}{p+1} + T))-2$.\\
\underline{Second step}: Let $k\geq 1$ be a fixed integer and suppose that the lemma holds up to $k$. Let $t$ be a rational tangle diagram consisting of $k+1$ integer tangles. We can write $t=\left[0,a_1,...,a_k,p\right]$. Note that if $p=1$, by adding $p$ with the integer tangle $a_k$ the tangle $t$ becomes a rational tangle consisting of $k$ integer tangles for which the result holds by the induction hypothesis. So we can suppose that $p\geq2$. Without loss of generality, we may assume that $k$ is even, in which case the integer tangle $p$ will be vertical. We do another induction on $p$. If $p=2$, then $n(-t+T)=n(-\left[0,a_1,...,a_k,2\right]+T)$. By applying a new time the skein relation at the top crossing of the integer tangle $2$ we get

$$
\Lambda_{n(-\left[0,a_1,...,a_k,2\right]+T)}=z\Lambda_{n(-\left[0,a_1,...,a_k+1\right]+T)}+za\Lambda_{n(-\left[0,a_1,...,a_k\right]+T)}
-a^{a_k}\Lambda_{n(-\left[0,a_1,...,a_{k-1}\right]+T)}.
$$

By the induction hypothesis, we get 
\begin{align*}
\deg_z\Lambda_{n(-\left[0,a_1,...,a_k,2\right]+T)}&=\deg_z\left(z\Lambda_{n(-\left[0,a_1,...,a_k+1\right]+T)}\right)\\
&=1+c(n(-\left[0,a_1,...,a_k+1\right]+T))-2\\
&=c(n(-\left[0,a_1,...,a_k,2\right]+T)) - 2.
\end{align*}
Now if we assume that the result holds for any integer up to $p$, $p\ge 2$, it easy to proof the result for $p+1$ by writing the skein formula and by using the induction hypothesis, and the proof is done.
\end{proof}
\begin{proof}[Proof of Proposition \ref{prop4.3}]
  We prove that $N(-t + T)$ is not alternating and we provide an almost alternating link diagram of it.\\
  Suppose that the link $N(-t + T)$ is alternating. Since $\det(-t+T)\ne 0$, then $N(-t+T)$ has a connected, reduced and alternating diagram $D$. By using Theorem 1 in \cite{thistlethwaite1987spanning}, we have $Br\langle D\rangle = 4c(D)$ where $Br\langle D\rangle$ is the breadth of $\langle D\rangle$, i.e. the difference between the maximal degree and the minimal degree of the indeterminate that occur in the Kauffman bracket polynomial $\langle D\rangle$. On the other hand, by using the tangle isotopy $-t = -1 + (\frac{1-t}{t} * 1)$, we get that $D$ is equivalent to the dealternator reduced and dealternator connected diagram $n(-1 + (\frac{1-t}{t} * 1) + T)$ as shown in Fig. \ref{fig16}.
  \begin{figure}[H]
\centering
\includegraphics[width=0.5\linewidth]{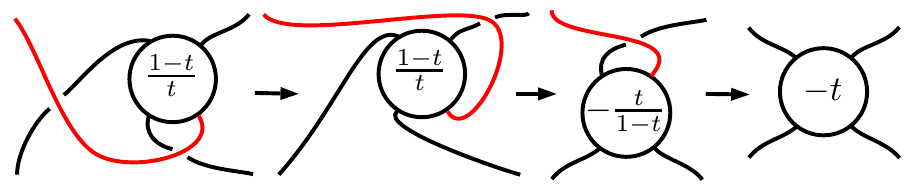}  
\caption{A equivalence of $-1 + (\frac{1-t}{t} * 1)$ and $-t$.}
\label{fig16}
\end{figure}
  \begin{figure}[H]
\centering
\includegraphics[width=0.6\linewidth]{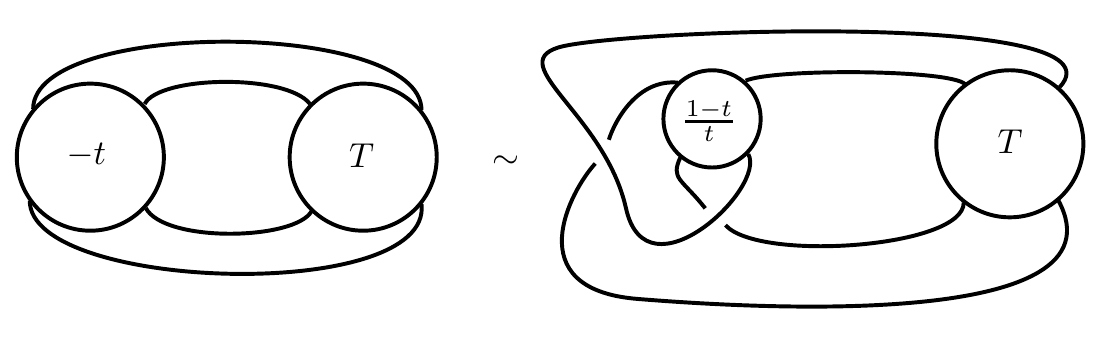} 
\caption{The link diagram $n(-t+T)$ (on the left) and the equivalent almost-alternating, dealternator reduced, and dealternator connected diagram $n(-1 + (\frac{1-t}{t} * 1) + T)$ (on the right).} 
\label{fig17}
\end{figure}
By Theorem 4.4 in \cite{adams1992almost}, we have $$Br\langle n(-1+(\frac{1-t}{t}*1)+T)\rangle \leq 4\left(c(n(-1+(\frac{1-t}{t}*1)+T))-3\right).$$ 
We can easily see from Fig. \ref{fig16} that $c( n(-1 + (\frac{1-t}{t} * 1) + T)) = 1 + c( n(-t + T))$. Hence, $c(D) \leq c\left( n(-t + T) \right) -2$. By Theorem 4 in \cite{thistlethwaite1988kauffman}, we have $\deg_z\Lambda_{D} \leq c(D) - 1$. Hence, by Lemma \ref{lem4.4} we have $c(n(-t + T))-2 \leq c(D) - 1$. This is absurd since $c(D) \leq c( n(-t + T)) -2$.
\end{proof}

\begin{theo}
Let $T$ be a strongly alternating tangle diagram, $k$ be an integer, $k\geq 1$, and $\{t_1,\dots ,t_k\}$ be a family of rational tangle diagrams such that $0<t_i<1$, for each $i$. If the link $N(-1 + T)$ is quasi-alternating, $n(T)$ is prime, and $\det\left( n(T) \right) > \det\left( d(T) \right)$, then the link $N(-1 + t_1 + ... + t_k + T)$ is almost alternating and quasi-alternating.
\label{th4.5}
\end{theo}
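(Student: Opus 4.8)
The plan is to isolate a single \emph{propagation step} and iterate it. Call a strongly alternating tangle diagram $S$ \emph{admissible} if $n(S)$ is prime, $\det(n(S))>\det(d(S))$, and $N(-1+S)$ is quasi-alternating; thus the hypothesis of the theorem is exactly that $T$ is admissible. The core of the argument is the claim that \emph{if $S$ is admissible and $s$ is a rational tangle diagram with $0<s<1$, then $s+S$ is admissible}. Granting this, iterating it with $s=t_k,t_{k-1},\dots,t_1$ in turn shows that each of $T,\ t_k+T,\ t_{k-1}+t_k+T,\ \dots,\ t_1+\dots+t_k+T$ is admissible; in particular $N(-1+t_1+\dots+t_k+T)$ is quasi-alternating, which is the first assertion. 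For the ``almost alternating'' conclusion, write $t_1=\tfrac{a}{b}$ in lowest terms with $0<a<b$, so that $-1+t_1=-\tfrac{b-a}{b}$ with $0<\tfrac{b-a}{b}<1$ and
\[
N(-1+t_1+\dots+t_k+T)=N\!\left(-\tfrac{b-a}{b}+T''\right),\qquad T'':=t_2+\dots+t_k+T .
\]
Since $T''$ is admissible (hence strongly alternating with $n(T'')$ prime) and the link on the right is quasi-alternating, so has nonzero determinant, Proposition \ref{prop4.3} applies and shows the link is almost alternating.

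Everything therefore reduces to the propagation claim, whose quasi-alternating part is the heart of the matter. I would argue it as follows. As $S$ is strongly alternating and $N(-1+S)$ is quasi-alternating, $D:=n(-1+S)$ is an almost alternating diagram of a quasi-alternating link with dealternator $c$, and $D^c_0=n(S)$, $D^c_\infty=d(S)$, so $\det(D^c_0)>\det(D^c_\infty)$. By Theorem \ref{th3.2}(1) the dealternator extension $D^{c\leftarrow -1/2}=n(-\tfrac12+S)$ is quasi-alternating. Write the tangle $-\tfrac12$ as the vertical stack $\tfrac1{-1}*\tfrac1{-1}$ of two negative crossings and let $c'$ be the top one. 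Its two smoothings in $n(-\tfrac12+S)$ are $n(-1+S)$ (quasi-alternating by hypothesis) and $n(S)$ (alternating, hence quasi-alternating), and Proposition \ref{prop3.2}(2) gives
\[
\det\!\big(n(-\tfrac12+S)\big)=2\det(n(S))-\det(d(S))=\big(\det(n(S))-\det(d(S))\big)+\det(n(S)),
\]
whose two summands are $\det(n(-1+S))$ and $\det(n(S))$; hence $c'$ is a quasi-alternating crossing of $n(-\tfrac12+S)$. Now set $r:=-\tfrac{b-a}{a}$, a negative rational tangle extending $c'$; replacing $c'$ by $r$ turns $\tfrac1{-1}*\tfrac1{-1}$ into $r*\tfrac1{-1}=-\tfrac{b-a}{b}=-1+s$, so $N(-1+s+S)$ is obtained from the quasi-alternating diagram $n(-\tfrac12+S)$ by a rational extension at its quasi-alternating crossing $c'$ and is quasi-alternating by Theorem \ref{th1.1}.

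It remains to see that $s+S$ stays admissible in the structural sense. The determinant inequality is formal: Proposition \ref{prop3.2}(1) gives $\det(n(s+S))=b\det(n(S))+a\det(d(S))$, while $d(s+S)\simeq d(s)\# d(S)$ gives $\det(d(s+S))=b\det(d(S))$, so $\det(n(s+S))>\det(d(s+S))$ follows from $\det(n(S))>\det(d(S))$. Primeness of $n(s+S)$ follows from Lemma \ref{lem2.2}: the tangle $s+S$ is alternating, connected and locally unknotted (the last because primeness of $n(S)$ forces $S$ locally unknotted, and $s$ is locally unknotted), while its denominator closure $d(s)\# d(S)$ is a connected sum of two nontrivial links, hence composite, so $n(s+S)$ is prime. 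The one point needing genuine care — and the step I expect to be the main obstacle — is that $s+S$ is \emph{reduced}, i.e. really a strongly alternating diagram: one must rule out nugatory crossings created at the seam between $s$ and $S$, handling in particular the case $s=1/p$ where $n(s)$ on its own is non-reduced. Once reducedness is established the remaining bookkeeping — including checking that $r=-\tfrac{b-a}{a}$ extends $c'$ in the precise sense required by Theorem \ref{th1.1} — is routine.
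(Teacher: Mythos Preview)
Your argument is correct and follows essentially the same route as the paper: both hinge on Theorem~\ref{th3.2} to make the dealternator extension $n(-\tfrac12+S)$ quasi-alternating, then use Theorem~\ref{th1.1} to promote the inserted $\tfrac12$-tangle to an arbitrary $t_i$ via the identity $\tfrac{t}{1-t}*1=t$ (your $r*\tfrac{1}{-1}=-1+s$ is the same trick on the negative side), and finally invoke Proposition~\ref{prop4.3} with the tangle $t_2+\dots+t_k+T$ for almost-alternatingness. The only difference is organizational: the paper first inserts $k$ copies of $\tfrac12$ (proving $N(-1+T_k)$ quasi-alternating by an auxiliary lemma) and then extends them all, whereas you package everything into a single one-step ``admissible $\Rightarrow$ admissible'' propagation; your flagged obstacle (that $s+S$ remains strongly alternating) is exactly the content of the Remark preceding the paper's Lemma, which the paper also asserts without detailed proof.
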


Denote by $T_k = \frac{1}{2} + ... + \frac{1}{2} + T$ the sum of $k$ copies of the rational tangle $\frac{1}{2}$ and the tangle $T$.

\begin{rmq}
The following results easily follow from the assumptions assumed in the statement of the theorem above. For each $k \geq 1$, the tangle diagram $T_k$ is strongly alternating and locally unknotted. Since $d(T_k)$ is composite then by using Lemma \ref{lem2.2}, the link diagram $n(T_k)$ is prime. Furthermore by simple calculations, one can prove the following relations
$$\det(d(T_{k}))=2^k\det(d(T))\,\,\text{and}\ \ \det(n(T_{k}))=2^k\det(n(T))+k2^{k-1}\det(d(T)).$$
Hence $\det (n(T_{k}))>\det (d(T_{k}))$.
\end{rmq}

\begin{lem}
  Let $T$ be a strongly alternating tangle diagram such that $n(T)$ is prime, the link $N(-1+T)$ is quasi-alternating and $\det (n(T))>\det (d(T))$. Then for each $k$, the link $N(-1+T_k)$ is almost alternating and quasi-alternating. 
\end{lem}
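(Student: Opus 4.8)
The plan is to induct on $k$, using throughout the tangle identity $-1+T_k=-\frac{1}{2}+T_{k-1}$, which follows from $-1+\frac{1}{2}=-\frac{1}{2}$ together with $T_k=\frac{1}{2}+T_{k-1}$; thus each step of the induction is a dealternator extension with fraction $-\frac{1}{2}$ in the sense of Theorem \ref{th3.2}. First I would record that for every $j\ge 0$ (with $T_0=T$) the diagram $T_j$ is strongly alternating, $n(T_j)$ is prime, and $\det(n(T_j))>\det(d(T_j))$: for $j\ge 1$ this is the content of the Remark preceding the statement, and for $j=0$ it is part of the hypotheses on $T$. Consequently $n(-1+T_j)$ is a dealternator connected reduced almost alternating diagram whose dealternator $c_j$ has smoothings $(n(-1+T_j))^{c_j}_0\simeq n(T_j)$ and $(n(-1+T_j))^{c_j}_\infty\simeq d(T_j)$, exactly as in the proof of Corollary \ref{cor3}.

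For the quasi-alternating assertion I would argue by induction on $k\ge 0$; the base case $k=0$ is the hypothesis that $N(-1+T)$ is quasi-alternating. For the inductive step, assume $N(-1+T_{k-1})$ is quasi-alternating and apply Theorem \ref{th3.2} to the almost alternating diagram $D=n(-1+T_{k-1})$ with dealternator $c=c_{k-1}$: the facts above give $\det(D^c_0)=\det(n(T_{k-1}))>\det(d(T_{k-1}))=\det(D^c_\infty)$, so case (1) of Theorem \ref{th3.2} yields that $D^{c\leftarrow-\frac{1}{2}}$ is a quasi-alternating diagram. Since $D^{c\leftarrow-\frac{1}{2}}=n(-\frac{1}{2}+T_{k-1})$ and $-\frac{1}{2}+T_{k-1}=-1+T_k$ as tangles, this says precisely that $N(-1+T_k)$ is quasi-alternating, closing the induction.

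For the almost alternating assertion I would then fix $k\ge 1$ (for $k=0$ the link $N(-1+T)$ need not be non-alternating) and apply Proposition \ref{prop4.3} with $T_{k-1}$ in the role of $T$ and $\frac{1}{2}$ in the role of $t$: its hypotheses hold, since $T_{k-1}$ is strongly alternating with $n(T_{k-1})$ prime, $0<\frac{1}{2}<1$, and $\det(N(-\frac{1}{2}+T_{k-1}))=\det(N(-1+T_k))\ne 0$ because $N(-1+T_k)$ is quasi-alternating and no link of zero determinant is quasi-alternating. Hence $N(-1+T_k)=N(-\frac{1}{2}+T_{k-1})$ is almost alternating.

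I expect the main obstacle to be purely organizational: at each stage of the induction one must check that the diagram handed to Theorem \ref{th3.2} and to Proposition \ref{prop4.3} still meets the structural hypotheses — strongly alternating, prime numerator closure, and the strict inequality $\det(n(T_j))>\det(d(T_j))$ — which is exactly the role played by the Remark preceding the statement, where these properties are established uniformly in $j$. Beyond this, the only ingredients are the tangle arithmetic $-1+\frac{1}{2}=-\frac{1}{2}$ and the description of the two smoothings of $n(-1+S)$ recorded in the proof of Corollary \ref{cor3}; no further geometric input is needed.
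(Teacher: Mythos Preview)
Your proof is correct and follows essentially the same approach as the paper's: induct on $k$ via the identity $-1+T_k=-\tfrac{1}{2}+T_{k-1}$, applying Theorem~\ref{th3.2} for quasi-alternateness and Proposition~\ref{prop4.3} for almost-alternateness, with the Remark supplying the structural hypotheses on each $T_j$. The only difference is organizational---you decouple the two properties and start the quasi-alternating induction at $k=0$, whereas the paper handles them simultaneously from $k=1$; your version is in fact slightly more careful in that you explicitly verify the nonzero-determinant hypothesis of Proposition~\ref{prop4.3} from the already-established quasi-alternateness.
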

\begin{proof} We will do an induction on $k$.\\
  If $k=1$: we must show the result for the link $N(-1+T_1)$. Note that the rational tangles $-\frac{1}{2}$ and $-1+\frac{1}{2}$ are the same then the links $N(-\frac{1}{2}+T)$ and $N(-1+(\frac{1}{2}+T))=N(-1+T_1)$ are equivalent. The tangle diagram $T$ is strongly alternating and $n(T)$ is prime. Since the rational tangle $\frac{1}{2}$ is less than one, then by Proposition \ref{prop4.3} the link  $N(-\frac{1}{2}+T)$ is almost alternating.\\
  On the other hand, $n(-1+T)$ is an almost alternating diagram of a quasi-alternating link. Remembering that $n(T)$ and $d(T)$ are exactly the smoothings of $n(-1+T)$ at $c$, since $\det (n(T))>\det (d(T))$, then by using Theorem \ref{th3.2}, the dealternator extension of the diagram $n(-1+T)$ by $-\frac{1}{2}$ which is $N(-\frac{1}{2}+T)$ is quasi-alternating.\\
Now, assume that the result is true for $k-1$. That is $N(-1+T_{k-1})$ is almost alternating and quasi-alternating. By the remark above, the tangle $T_{k-1}$ satisfies the same conditions as $T$. By applying the same arguments used for the case $k=1$ with $T_k$ instead $T$, we obtain that the link $N(-\frac{1}{2}+T_{k-1})=N(-1+\frac{1}{2}+T_{k-1})=N(-1+T_{k})$ is both quasi-alternating and almost alternating.
\end{proof}

\begin{proof}[Proof of Theorem \ref{th4.5}]
We start by noting that the link diagram $n(-1+T_k)$ is quasi-alternating at each crossing of its rational tangles $\frac{1}{2}$. Indeed, for any integer $j$, $1\le j\le k$, the two smoothings of $n(-1+T_k)$ at the top crossing of the $j$-th tangle $\frac{1}{2}$ are exactly $n(-1+T_{k-1})$ and $n(T_{k-1})$ which are both quasi-alternating. Moreover $\det(n(-1+T_{k-1}))=\det(n(-1+T_k))+\det(n(T_{k-1}))$. By a similar argument one can show that the bottom crossing of the $j$-th tangle $\frac{1}{2}$ is also quasi-alternating.\\
Now, for each $i$, $1\le i\le k$,  we extend the link diagram $n(-1+T_k)$ at the top crossing of the $i$-th tangle $\frac{1}{2}$ by the rational tangle by $\frac{t_i}{1-t_i}$. By using the rational tangle equivalence $\frac{t_i}{1-t_i}*1 = t_i$, we see that those extensions provide the link diagram $n(-1+t_1+...+t_k+T)$ which is then quasi-alternating by Theorem \ref{th1.1}.\\
To see that $n(-1+t_1+...+t_k+T)$ is almost alternating, one can apply Proposition \ref{prop4.3} by considering the rational tangle $(1-t_1)<1$ and the tangle $t_2+...+t_k+T$.
\end{proof}

\begin{rmq} After testing on several examples, we suspect the following result. If $T$ is a strongly alternating tangle diagram such that $n(T)$ is prime, then $$\det \left( n(T) \right) > \det \left( d(T) \right).$$ If this is true, it will allow to reduce the assumptions of the Theorem \ref{th4.5}.
\end{rmq}

\begin{cor} 
Let $T$ be a strongly alternating tangle diagram, let $k$ be an integer, $k\geq 1$, and let $\{t_1,\dots ,t_k\}$ be a family of rational tangle diagrams such that $0<t_i<1$, for each $i$. If the link $N(-1 + T)$ is quasi-alternating, $d(T)$ is prime, and $\det\left( d(T) \right) > \det\left( n(T) \right)$, then the link $N(-1 + t_1 + ... + t_k + \frac{1}{T}_{cc})$ is almost alternating and quasi-alternating.
\end{cor}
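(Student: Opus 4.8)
The plan is to deduce the corollary from Theorem~\ref{th4.5}, applied not to $T$ but to its inversion $S:=\frac{1}{T}_{cc}$, keeping the same family $\{t_1,\dots ,t_k\}$. I would first record the elementary dictionary for inversion: rotating a tangle diagram by a quarter turn preserves alternation and interchanges the two closures, so $n(S)=d(T)$ and $d(S)=n(T)$ (the identity $n(T)=d(\frac{1}{T}_{cc})$ is exactly the one already invoked in the proof of Lemma~\ref{lem2.2}). Since $T$ is strongly alternating, $n(T)$ and $d(T)$ are connected and reduced, hence so are $n(S)$ and $d(S)$, and therefore $S$ is again a strongly alternating tangle diagram, normalized to the standing type/mirror conventions.

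Next I would check that the three hypotheses of Theorem~\ref{th4.5} hold for $S$. Primeness of $n(S)$: this is $d(T)$, which is prime by assumption. The determinant inequality: $\det(n(S))=\det(d(T))>\det(n(T))=\det(d(S))$, again by assumption. The remaining point, that $N(-1+S)$ is quasi-alternating, is the only one that is not a pure translation of hypotheses; for it I would invoke Remark~\ref{rmq1} with the rational tangle $-1$ and the alternating tangle $S$, which gives that $n(-1+S)$ is, up to mirror image, the diagram $n(-1+\frac{1}{S}_{c})$. Since inverting $S=\frac{1}{T}_{cc}$ in the opposite sense undoes the rotation, $\frac{1}{S}_{c}=T$, so $n(-1+S)$ is the mirror of $n(-1+T)$; as $N(-1+T)$ is quasi-alternating and quasi-alternation is preserved under mirror image, $N(-1+S)$ is quasi-alternating.

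With all the hypotheses of Theorem~\ref{th4.5} verified for $S$, that theorem yields that $N(-1+t_1+\dots +t_k+S)$ is almost alternating and quasi-alternating; since $S=\frac{1}{T}_{cc}$ this link is exactly $N(-1+t_1+\dots +t_k+\frac{1}{T}_{cc})$, which is the assertion.

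I expect the only genuinely delicate step to be the bookkeeping around the inversion: keeping track of the two inversions $\frac{1}{T}_{c}$ and $\frac{1}{T}_{cc}$, of the type-$1$ normalization for alternating tangle diagrams, and of the mirror images produced by Remark~\ref{rmq1}, so that the diagram fed into Theorem~\ref{th4.5} is literally of the required form and the output is literally $N(-1+t_1+\dots +t_k+\frac{1}{T}_{cc})$ rather than its mirror. Once Remark~\ref{rmq1} and the closure identities $n(S)=d(T)$, $d(S)=n(T)$ are in place, this is routine, and no input beyond Theorem~\ref{th4.5} is needed.
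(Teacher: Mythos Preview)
Your proposal is correct and takes essentially the same approach as the paper: set $S=\frac{1}{T}_{cc}$, translate the hypotheses on $T$ into the hypotheses of Theorem~\ref{th4.5} for $S$ via the closure identities, and apply that theorem. The only cosmetic difference is that the paper establishes the equivalence (up to mirror) of $n(-1+\frac{1}{T}_{cc})$ with $n(-1+T)$ by a direct pictorial isotopy (Fig.~\ref{fig12}) rather than by invoking Remark~\ref{rmq1} as you do; your bookkeeping around $n(S)$ versus $d(T)$ may be off by a mirror, but since only primeness and determinant are used this is harmless.
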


\begin{proof}
The main observation is that the link diagram $n(-1 + \frac{1}{T}_{cc})$ is equivalent to $n(-1+T)$ up to mirror image as one can see in Fig. \ref{fig12}. Hence the link $N(-1 + \frac{1}{T}_{cc})$ is quasi-alternating.

\begin{figure}[H]
\centering
\includegraphics[width=0.6\linewidth]{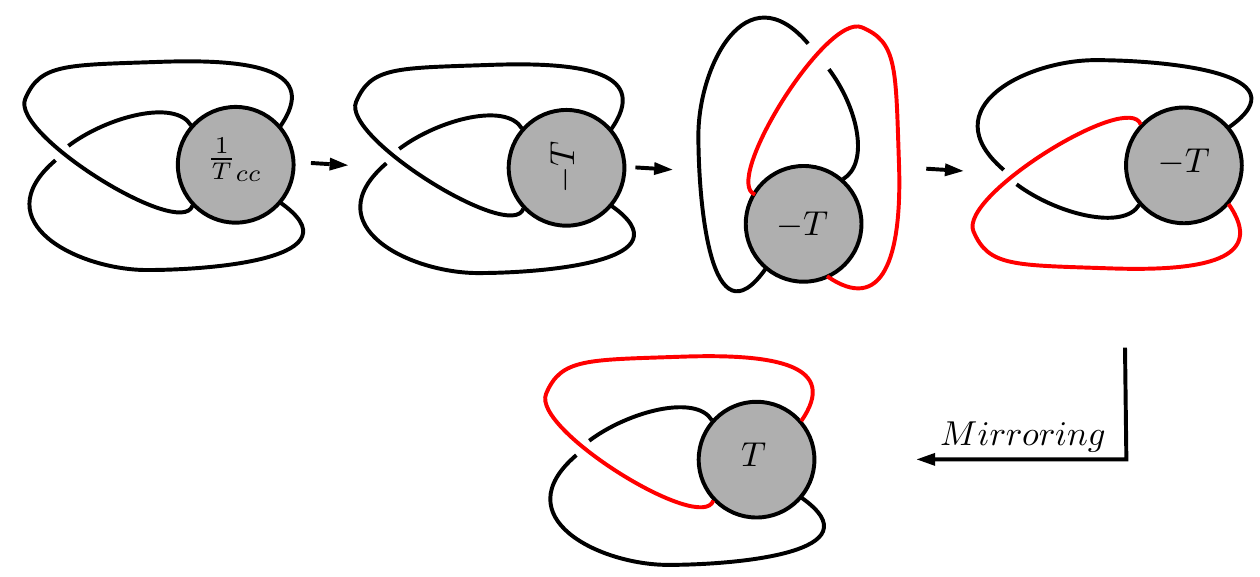} 
\caption{Equivalence between $n(-1 + T)$ and $n(-1 + \frac{1}{T}_{cc})$.} 
\label{fig12}
\end{figure}

On the other hand, $\frac{1}{T}_{cc}$ is strongly alternating and $n(\frac{1}{T}_{cc})=d(-T)=-d(T)$. Then
$$\det(n(\frac{1}{T}_{cc}))=\det(d(T))>\det(n(T))=\det(-(-n(T))=\det(n(-T))=\det(d(\frac{1}{T}_{cc})).$$
Then we apply Theorem \ref{th4.5} to the tangle $\frac{1}{T}_{cc}$ instead of $T$.
\end{proof}

\begin{empl}
We consider the quasi-alternating and almost alternating link diagram on the left in Fig. \ref{exmpl16}. By applying Theorem \ref{th4.5} we get the quasi-alternating and almost alternating link whose diagram is on the right in the same figure.
 
 \begin{figure}[H]
\centering
\includegraphics[width=0.6\linewidth]{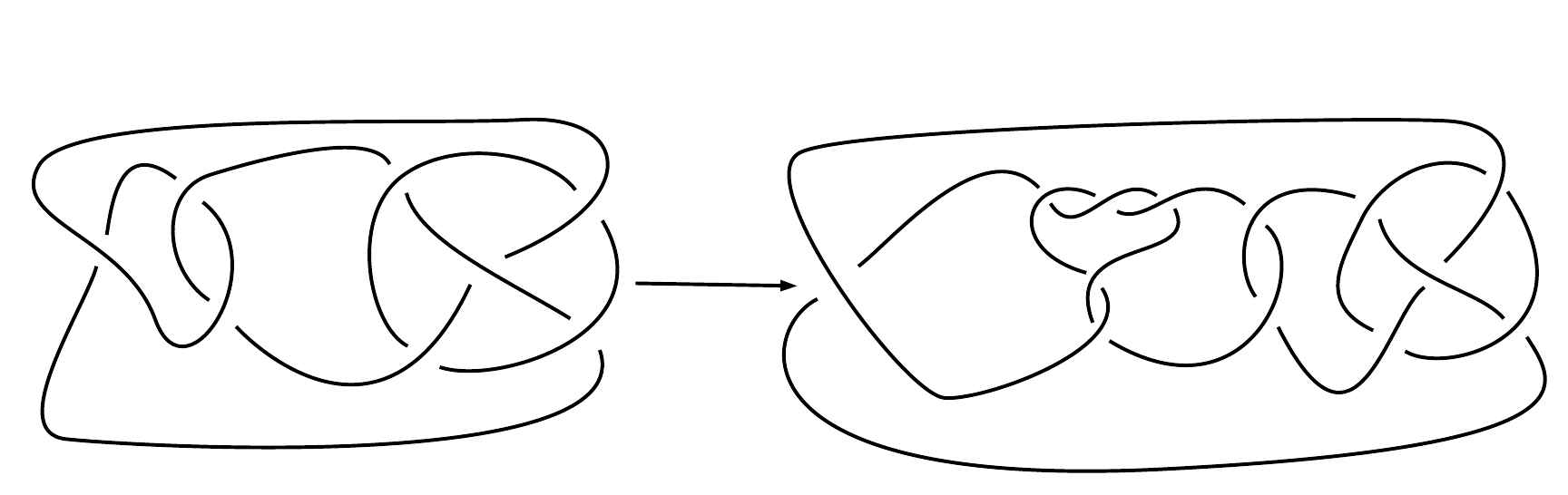} 
\caption{} 
\label{exmpl16}
\end{figure}
\end{empl}

\begin{theo}
Every connected, locally unknotted, reduced alternating tangle diagram generates infinitely many quasi alternating and almost alternating links.
\label{th4.6}
\end{theo}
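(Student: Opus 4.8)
The plan is to reduce this statement to Theorem \ref{th4.5} by manufacturing, out of any connected, locally unknotted, reduced alternating tangle diagram, a strongly alternating tangle diagram $T$ together with a quasi-alternating link $N(-1+T)$ and the numerical inequality $\det(n(T))>\det(d(T))$. So let $A$ be the given tangle diagram. First I would normalize $A$: up to isotopy we may assume $A$ is of type 1 (replacing $A$ by its horizontal flip if necessary, which preserves connectedness, reducedness and the alternating property), and up to mirror image we may assume all edges of $G_d(A)$ are negative, i.e. $G_n(A)$ has only positive edges. Since $A$ is connected, both $G_n(A)$ and $G_d(A)$ are connected, and since $A$ is locally unknotted, Lemma \ref{lem2.2} tells us that $n(A)$ or $d(A)$ is prime; after possibly replacing $A$ by $\frac{1}{A}_{cc}$ (which swaps the roles of numerator and denominator and is still alternating, connected, reduced, locally unknotted) we may assume $n(A)$ is prime. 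Note that $n(A)$ and $d(A)$ are connected reduced alternating diagrams, so $A$ is strongly alternating.

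Next I would arrange the determinant inequality. The subtlety is that we are not free to assume $\det(n(A))>\det(d(A))$ — and the authors flag exactly this in the remark following Theorem \ref{th4.5}, where they only conjecture that primality of $n(T)$ forces this inequality. So instead of using $A$ directly, I would pass to an auxiliary tangle obtained by adding vertical (or horizontal) twists to $A$: for a suitable large integer $m\geq 1$ consider $T:=A+\underbrace{1+\cdots+1}_{m}$ or, in the other case, $T:=\frac1{m}+A$ realized as a strongly alternating type 1 diagram. Adding $m$ positive horizontal twists multiplies $\det(d(\cdot))$ by a bounded factor while $\det(n(\cdot))$ grows linearly in $m$ (this is exactly the kind of computation behind Proposition \ref{prop3.2} and the remark computing $\det(n(T_k))$, $\det(d(T_k))$), so for $m$ large enough $\det(n(T))>\det(d(T))$; one checks $T$ remains connected, reduced, locally unknotted, hence strongly alternating, and that $n(T)$ is still prime (adding twists to a prime alternating diagram keeps it prime, e.g.\ by the non-separability criterion of \cite{thistlethwaite1987spanning} used in the proof of Lemma \ref{lem2.2}).

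Then I would check that $N(-1+T)$ is quasi-alternating. Here $n(-1+T)$ is a dealternator connected reduced almost alternating diagram with smoothings $n(T)$ and $d(T)$, both alternating hence quasi-alternating; and Proposition \ref{prop3.1} gives $\det(n(-1+T))=|\det(n(T))-\det(d(T))|=\det(n(T))-\det(d(T))>0$, so $N(-1+T)$ is non-split. To promote non-splitness to quasi-alternateness of $N(-1+T)$ I would invoke the mechanism already used inside the proof of the Montesinos theorem (Step 2 there): one shows directly that the relevant almost alternating diagram is quasi-alternating at the dealternator by verifying the determinant additivity $\det=\det_0+\det_\infty$, which here reads $\det(n(-1+T))=\det(n(T))-\det(d(T))$ combined with the fact that we can realize it as a smoothing of an alternating diagram — more precisely, $-1+T$ together with one extra twist gives an alternating diagram, and the dealternator is quasi-alternating because the two smoothings are alternating and the determinants add; so $N(-1+T)\in\mathcal Q$. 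With $T$ strongly alternating, $n(T)$ prime, $N(-1+T)$ quasi-alternating, and $\det(n(T))>\det(d(T))$ all in hand, Theorem \ref{th4.5} applies: for every $k\geq 1$ and every choice of rational tangles $0<t_i<1$ the link $N(-1+t_1+\cdots+t_k+T)$ is almost alternating and quasi-alternating, and varying $k$ (or the $t_i$) produces infinitely many distinct such links — distinctness can be seen from the growth of the determinant or the crossing number.

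The main obstacle I expect is the determinant inequality step: one must be sure that after adding twists the strict inequality $\det(n(T))>\det(d(T))$ genuinely holds and simultaneously that primality of $n(T)$ is preserved, since the paper deliberately does not assume the conjectural implication ``$n(T)$ prime $\Rightarrow \det(n(T))>\det(d(T))$''. Making the twist construction explicit (including the bookkeeping of which of $n(A)$, $d(A)$ is prime, and whether to add horizontal or vertical twists accordingly) and verifying $N(-1+T)$ is quasi-alternating rather than merely non-split are the places where care is needed; everything else is an invocation of Lemma \ref{lem2.2}, Proposition \ref{prop3.1}, and Theorem \ref{th4.5}.
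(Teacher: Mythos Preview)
Your reduction to Theorem \ref{th4.5} is the right strategy, but the way you manufacture the auxiliary tangle $T$ and the way you justify $N(-1+T)\in\mathcal Q$ both break down.

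\textbf{The quasi-alternateness step is wrong as written.} You claim that the almost alternating diagram $n(-1+T)$ is ``quasi-alternating at the dealternator'' by checking a determinant identity; but Corollary \ref{cor2} says this can never happen, and indeed your own identity $\det(n(-1+T))=\det(n(T))-\det(d(T))$ is a \emph{difference}, not the sum $\det_0+\det_\infty$ required by the definition. Step 2 of the Montesinos proof does not establish quasi-alternateness of the base link at its dealternator either: there the base link $M(-1;\frac{\lambda}{\gamma},\frac{\eta}{\delta})$ is quasi-alternating because it is a non-split \emph{alternating} link, full stop.

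\textbf{The twist constructions do not produce a tangle meeting the hypotheses of Theorem \ref{th4.5}.} If $T=A+m$ with $m\ge 1$ horizontal twists, then $d(T)=d(A)\#d(m)$ and $d(m)$ is an unknot diagram with $m$ nugatory crossings; hence $d(T)$ is not reduced and $T$ is not strongly alternating. If instead $T=\frac{1}{m}+A$, then $\det(n(T))=m\det(n(A))+\det(d(A))$ while $\det(d(T))=m\det(d(A))$, so $\det(n(T))-\det(d(T))=m\bigl(\det(n(A))-\det(d(A))\bigr)+\det(d(A))$, which for large $m$ is negative precisely when $\det(n(A))<\det(d(A))$ --- exactly the case you were trying to repair. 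And for this $T$ the link $N(-1+T)=N\!\left(\frac{1-m}{m}+A\right)$ is no longer obviously alternating, so you have no independent route to its quasi-alternateness.

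The paper avoids all of this with a single device: given the input tangle $T$, it builds a tangle $T\star t$ (for a rational $t=[0,a_1,\dots,a_k]$) with the property that $n(-t+T\star t)$ is isotopic to $n(T)$. Rewriting $-t=-1+(\frac{1-t}{t}*1)$ exhibits an almost alternating diagram $n(-1+S)$, with $S=(\frac{1-t}{t}*1)+(T\star t)$, of the \emph{non-split alternating} link $N(T)$; quasi-alternateness is then free. The tangle $S$ is strongly alternating by construction, $d(S)$ is composite so $n(S)$ is prime by Lemma \ref{lem2.2}, and a direct computation gives $\det(n(S))>\det(d(S))$. With these in hand Theorem \ref{th4.5} applies. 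The essential idea you are missing is that one should not try to prove quasi-alternateness of some new link $N(-1+T)$, but rather engineer $S$ so that $N(-1+S)$ is the original alternating link itself.
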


\begin{proof}
  Let $ t= \left[ 0,a_1,...,a_k\right]$ be a rational tangle such that $k \geq 2$ is an even integer and $T$ be a connected, locally unknotted, reduced alternating tangle diagram. We denote by $T \star t$ the tangle diagram depicted in Fig. \ref{fig21bis}.

\begin{figure}[!htbp]
\centering
\includegraphics[width=0.3\linewidth]{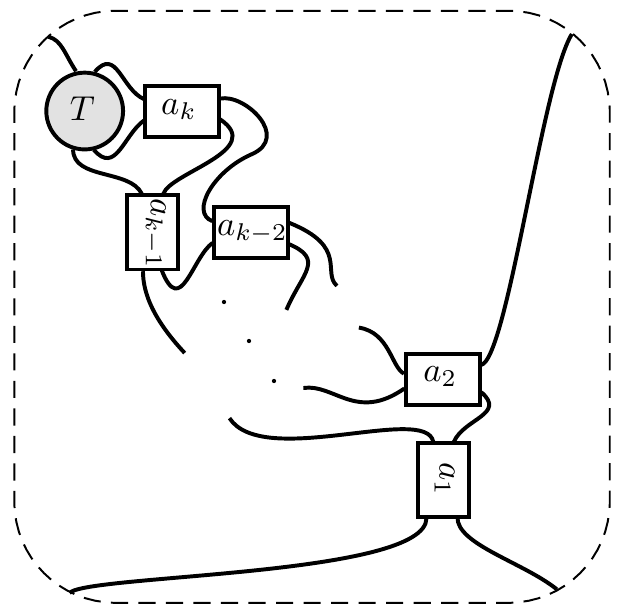}  
\caption{The tangle diagram $T \star t$.}
\label{fig21bis}
\end{figure}

We note that the link diagram $n(-t+T \star t)$ equivalent to $n(T)$ or $n(T_h)$ according to the parity of $a_k$ (the links $N(T)$ and $N(T_h)$ are the same). Fig. \ref{fig13} exhibits that equivalence for a particular rational tangle $\left[ 0,n,2m \right]$. The link $N(-t+T \star t)$ is alternating. It is non-split by connectedness of $T$. By using the rational tangle equivalence $-t= -1+(\frac{1-t}{t}*1)$, the link diagram $n(-t+T \star t)$ is equivalent to the almost alternating, dealternator connected, and dealternator reduced link diagram $n\left( -1+(\frac{1-t}{t}*1) + (T \star t) \right)$. Denote by $S$ the tangle diagram $(\frac{1-t}{t}*1) + (T \star t)$. It is clear that $S$ is strongly alternating and Lemma \ref{lem2.2} provides that $n(S)$ is prime. Furthermore, simple calculations show that $\det(n(S)) > \det(d(S))$. We can then apply Theorem \ref{th4.5} to generate infinitely many quasi-alternating and almost alternating links starting with the link diagram $n(-1+S)$, which represents the non-split alternating link $N(T)$.
\end{proof}

\begin{figure}[!htbp]
\centering
\includegraphics[width=0.7\linewidth]{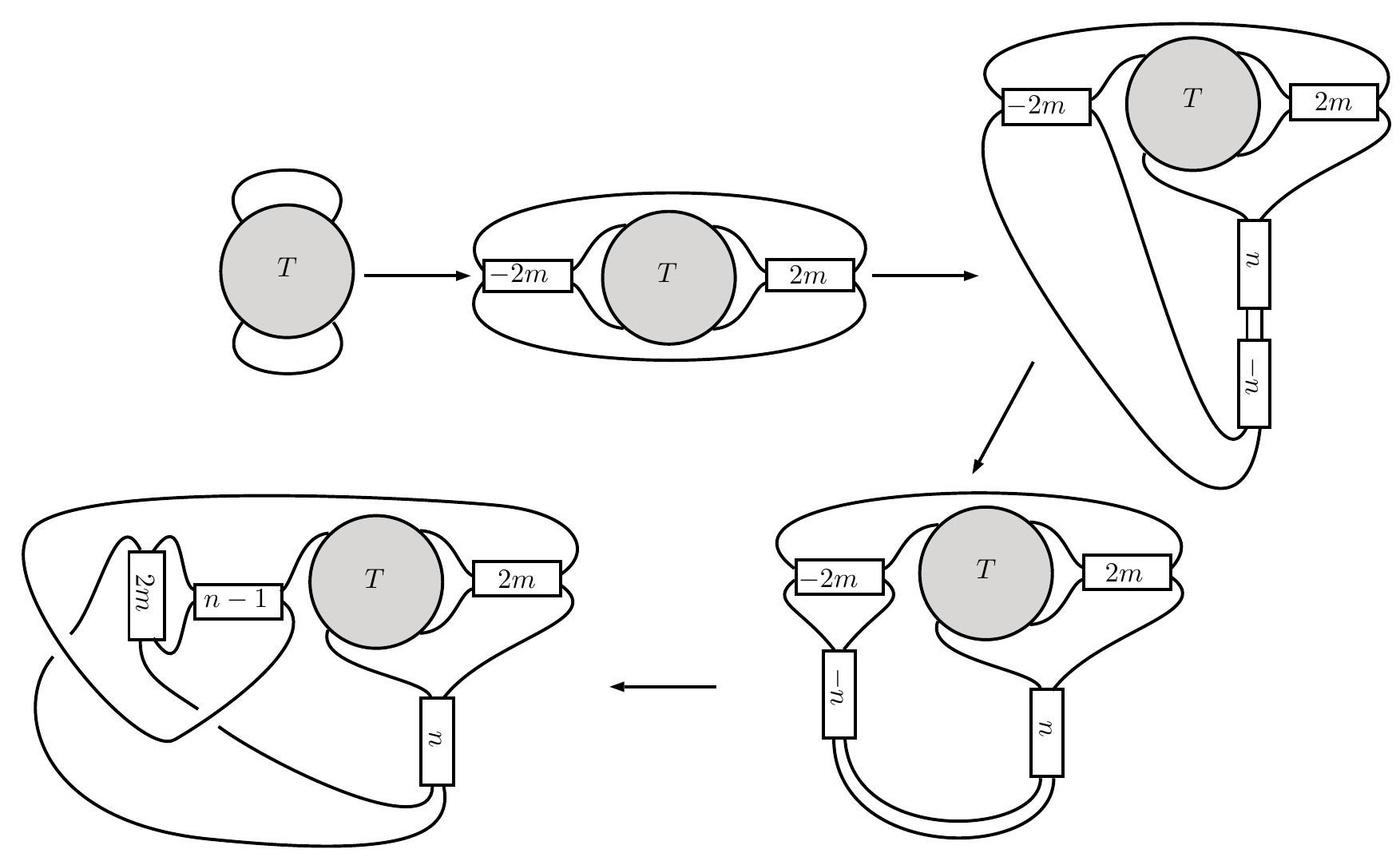}  
\caption{The link diagram $n(T)$ is equivalent to $n(-\left[ 0,n,2m \right] +T\star[0,n,2m])$. This is equivalent to the almost alternating link diagram $ n(-1 + \left[ 0,1,n-1,2m \right] + T \star \left[ 0,n,2m \right]) $.}
\label{fig13}
\end{figure}

\section{Crossing numbers and determinants of the generated links}
Qazaqzeh et al. showed that the cossing number of any alternating (non-split) link is less than its determinant (see Proposition 2.2 in \cite{qazaqzeh2013remark}). Then following many verifications for some known families of quasi-alternating links, they stated Conjecture \ref{conj1}. Our aim in this section is to check that the conjecture is satisfied by all links provided by dealternator extensions.

\begin{prop}
Let $D$ be an almost alternating link diagram with dealternator $c$ such that $\mathcal{L}(D)$ is quasi-alternating and $c(D) \leq \det(D)$. Let $D^{c \leftarrow \omega}$, where $\omega$ is equal to either $-\frac{1}{2} \text{ or } -2$, be the quasi-alternating dealternator extension of $D$ obtained by Theorem \ref{th3.2}. Then every rational extension $\mathcal{D}$ of $D^{c \leftarrow\omega}$ at $c$ satisfies $c(\mathcal{D}) \leq \det(\mathcal{D})$.
\label{prop5.1}
\end{prop}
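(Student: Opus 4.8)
\noindent The idea is to control, along rational extensions at the quasi-alternating crossing $c$, how the crossing number and the determinant evolve one crossing at a time, using Theorem~\ref{th1.1} to propagate quasi-alternatingness. First I would reduce to a single case. By Corollary~\ref{cor1} we have $\det(D^{c}_{0}) \ne \det(D^{c}_{\infty})$, and we may assume without loss of generality that $\det(D^{c}_{0}) > \det(D^{c}_{\infty})$, so that $\omega = -\frac{1}{2}$ by Theorem~\ref{th3.2}; the case $\det(D^{c}_{0}) < \det(D^{c}_{\infty})$, for which $\omega = -2$, is handled identically using the second formula of Corollary~\ref{cor3}. Write $a = \det(D^{c}_{0})$ and $b = \det(D^{c}_{\infty})$; then $a > b \ge 0$, hence $a \ge 1$, and $\det(D) = a - b$ by Proposition~\ref{prop3.1}.

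The base case is $\mathcal{D} = D^{c\leftarrow\omega}$ itself. Corollary~\ref{cor3} with $n = 2$ gives
$$\det\!\left(D^{c\leftarrow-\frac{1}{2}}\right) = \left| 2a - b\right| = a + (a - b) = a + \det(D) \ge 1 + \det(D) \ge 1 + c(D),$$
whereas $D^{c\leftarrow-\frac{1}{2}}$ is $D$ with the single crossing $c$ replaced by a two-crossing rational tangle, so $c\!\left(D^{c\leftarrow-\frac{1}{2}}\right) = c(D) + 1$; hence $c\!\left(D^{c\leftarrow\omega}\right) \le \det\!\left(D^{c\leftarrow\omega}\right)$, which already shows the conclusion for the trivial extension.

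For the inductive step I would observe that any rational extension $\mathcal{D}$ of $D^{c\leftarrow\omega}$ at $c$ is built from $D^{c\leftarrow\omega}$ by a finite chain $D^{c\leftarrow\omega} = \mathcal{D}_0, \mathcal{D}_1, \dots, \mathcal{D}_m = \mathcal{D}$, where $\mathcal{D}_{i+1}$ is obtained from $\mathcal{D}_i$ by replacing one crossing $x_i$ of the rational tangle assembled so far (a descendant of $c$, of Conway sign $\varepsilon(c) = -1$) by a two-crossing tangle extending $x_i$; this is precisely how a rational tangle $[a_1,\dots,a_n]$ with $a_j\varepsilon(c) \ge 1$ is produced from a single crossing. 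Since $c$ is quasi-alternating in $D^{c\leftarrow\omega}$ (Theorem~\ref{th3.2}) and, by Theorem~\ref{th1.1} and the remark following it, a rational extension of a quasi-alternating crossing keeps that crossing quasi-alternating and turns every inserted crossing into a quasi-alternating one, an easy induction shows that each $\mathcal{D}_i$ is quasi-alternating and, at the $i$-th step, that the newly inserted crossing $c_i'$ is a quasi-alternating crossing of $\mathcal{D}_{i+1}$. Now one of the two smoothings of $\mathcal{D}_{i+1}$ at $c_i'$ is $\mathcal{D}_i$ (the smoothing that undoes the added crossing), while the other, being a smoothing at a quasi-alternating crossing, is itself quasi-alternating and hence has determinant at least $1$ (a link of determinant $0$ is not quasi-alternating). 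By the defining relation of a quasi-alternating crossing this gives
$$c(\mathcal{D}_{i+1}) = c(\mathcal{D}_i) + 1 \qquad \text{and} \qquad \det(\mathcal{D}_{i+1}) \ge \det(\mathcal{D}_i) + 1.$$
Combining with the base case, the inequality $c(\mathcal{D}_i) \le \det(\mathcal{D}_i)$ propagates from $i = 0$ to $i = m$, which is the claim (and it in fact bounds the number of crossings of the diagram $\mathcal{D}$ by the determinant of the link, hence a fortiori $c(\mathcal{L}(\mathcal{D})) \le \det(\mathcal{L}(\mathcal{D}))$).

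The main obstacle is the structural claim opening the previous paragraph: that a rational extension at $c$ decomposes into elementary one-crossing extensions at quasi-alternating crossings for which, at each step, one smoothing of the freshly added crossing recovers the previous diagram. This is exactly the inductive mechanism underlying Champanerkar and Kofman's proof of Theorem~\ref{th1.1} in \cite{champanerkar2009twisting}; once it is invoked in that form, everything else reduces to the elementary arithmetic above, the fact that quasi-alternating links have determinant at least $1$, and the remark that $\det(D^{c}_{0}) \ge 1$ since it strictly exceeds $\det(D^{c}_{\infty}) \ge 0$.
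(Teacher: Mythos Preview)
Your proof is correct and follows the paper's approach: the base case $c(D^{c\leftarrow\omega})\le\det(D^{c\leftarrow\omega})$ is established by the same determinant computation, and then the inequality is propagated along rational extensions at the quasi-alternating crossing $c$. The only difference is that the paper handles the propagation by a direct citation of Theorem~2.3 in \cite{qazaqzeh2013remark}, whereas you unfold the one-crossing-at-a-time induction underlying that result (and Theorem~\ref{th1.1}) explicitly.
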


\begin{proof}
  We first consider the case where the quasi-alternating dealternator extension given by Theorem \ref{th3.2} is $D^{c \leftarrow-\frac{1}{2}}$. Note that this occurs if and only if $ \det\left( D^{c}_{0} \right) > \det\left( D^{c}_{\infty} \right)$.\\
We have $c\left( D^{c \leftarrow-\frac{1}{2}} \right) = c(D) +1 \leq \det(D)+1$. Now since $D^{c \leftarrow-\frac{1}{2}}$ is by assumption quasi-alternating at $c$, then 
$$\det\left( D^{c \leftarrow-\frac{1}{2}} \right) =\det((D^{c \leftarrow-\frac{1}{2}})^c_\infty)+\det((D^{c \leftarrow-\frac{1}{2}})^c_0)= \det\left( D\right) + \det\left( D^{c}_{0} \right).$$
Since $\det\left( D^{c}_{0} \right) > \det\left( D^{c}_{\infty} \right) \geq 0$, then 

$$c\left( D^{c \leftarrow-\frac{1}{2}} \right)  = c(D) +1 \leq \det(D)+1 \leq \det\left( D^{c \leftarrow-\frac{1}{2}} \right)$$

Now if $\mathcal{D}$ is a rational extension of $D^{c \leftarrow-\frac{1}{2}}$ at $c$, which is a quasi-alternating crossing in $D^{c \leftarrow-\frac{1}{2}}$, then by Theorem 2.3 in \citep{qazaqzeh2013remark} we get that $c(\mathcal{D}) \leq \det(\mathcal{D})$.
A similar argument gives the result in the case where the quasi-alternating dealternator extension given by Theorem \ref{th3.2} is $D^{c \leftarrow-2}$. 
\end{proof} 

\begin{prop}
Let $D$ be an almost alternating link diagram, dealternator reduced and dealternator connected, with dealternator $c$ and such that $\mathcal{L}(D)$ is quasi-alternating. Let $D^{c \leftarrow\omega}$, where $\omega$ is equal either to $-\frac{1}{2} \text{ or } -2$, be the quasi-alternating dealternator extension of $D$ obtained by Theorem \ref{th3.2}. Then every rational extension $\mathcal{D}$ of $D^{c \leftarrow\omega}$ at $c$ satisfies $c(\mathcal{D}) \leq \det(\mathcal{D})$.
\label{prop5.2}
\end{prop}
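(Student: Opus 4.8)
The plan is to prove the inequality $c(D^{c\leftarrow\omega})\le\det(D^{c\leftarrow\omega})$ directly for the dealternator extension itself, and then to conclude exactly as in the proof of Proposition~\ref{prop5.1}: applying Theorem~2.3 of \cite{qazaqzeh2013remark} to the quasi-alternating crossing $c$ of $D^{c\leftarrow\omega}$. It is worth stressing that one cannot simply quote Proposition~\ref{prop5.1} here: dropping the hypothesis $c(D)\le\det(D)$, there is in general no reason for $D$ itself to satisfy it, since $\det(D)=\lvert\det(D^c_0)-\det(D^c_\infty)\rvert$ (Proposition~\ref{prop3.1}) may be much smaller than $c(D)$. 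What saves the day is that the rational extension multiplies one of the two smoothing determinants by two, and this gain of room is just enough to absorb the single extra crossing created by the extension.

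First I would put $D$ into normal form. Since $D$ is almost alternating, dealternator reduced and dealternator connected, after rotating so that the dealternator $c$ is the $-1$ tangle, $D$ is equivalent to $n(-1+T)$ with $T$ a strongly alternating tangle diagram; the two smoothings of $D$ at $c$ are precisely the $0$- and $\infty$-closures of the complementary tangle, so, naming things so that $D^c_0=n(T)$ and $D^c_\infty=d(T)$, I treat the case in which Theorem~\ref{th3.2} uses $\omega=-\tfrac12=\tfrac{1}{-2}$, that is $\det(D^c_0)>\det(D^c_\infty)$, i.e. $\det(n(T))>\det(d(T))$; the case $\omega=-2$ is entirely symmetric, with the roles of $n(T)$ and $d(T)$ interchanged. (By Corollary~\ref{cor1} the two determinants are distinct, so one of the two cases occurs.) Because $T$ is strongly alternating, $n(T)$ and $d(T)$ are connected reduced alternating diagrams; being connected reduced alternating they are minimal (cf.\ \cite{thistlethwaite1987spanning}), so $c(N(T))=c(D(T))=c(T)$, and Proposition~2.2 of \cite{qazaqzeh2013remark} applied to the non-split alternating links $N(T)$ and $D(T)$ yields $c(T)\le\min\{\det(n(T)),\det(d(T))\}=\det(d(T))$.

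Then I would carry out the short estimate. On one hand, replacing the $-1$ tangle $c$ by the two-crossing tangle $\tfrac{1}{-2}$ adds exactly one crossing, so $c(D^{c\leftarrow-\frac12})=c(D)+1=c(T)+2$. On the other hand, by Corollary~\ref{cor3}, $\det(D^{c\leftarrow\frac{1}{-2}})=\lvert 2\det(D^c_0)-\det(D^c_\infty)\rvert=2\det(n(T))-\det(d(T))$, the absolute value being superfluous since $\det(n(T))>\det(d(T))>0$. Since $\det(n(T))$ and $\det(d(T))$ are integers with $\det(n(T))\ge\det(d(T))+1$, combining with $c(T)\le\det(d(T))$ gives
\[
c\!\left(D^{c\leftarrow-\frac12}\right)=c(T)+2\le\det(d(T))+2\le 2\det(n(T))-\det(d(T))=\det\!\left(D^{c\leftarrow-\frac12}\right).
\]
Now $c$ is a quasi-alternating crossing of $D^{c\leftarrow-\frac12}$ by Theorem~\ref{th3.2}, and $c(D^{c\leftarrow-\frac12})\le\det(D^{c\leftarrow-\frac12})$, so Theorem~2.3 of \cite{qazaqzeh2013remark} shows that every rational extension $\mathcal D$ of $D^{c\leftarrow-\frac12}$ at $c$ satisfies $c(\mathcal D)\le\det(\mathcal D)$; the $\omega=-2$ case is identical after swapping $n(T)$ and $d(T)$. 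The only genuinely delicate step, I expect, is the passage from the hypotheses ``dealternator reduced and dealternator connected'' to the normal form $n(-1+T)$ with $T$ strongly alternating, together with the identification of the two smoothings as $n(T)$ and $d(T)$; once this is granted, the crossing-number bound is the one-line computation above, whose whole content is that doubling the larger smoothing determinant outweighs the single extra crossing of the extension.
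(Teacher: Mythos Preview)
Your proof is correct and follows essentially the same route as the paper's. The paper argues by contradiction rather than directly, and works with $D^c_0$, $D^c_\infty$ without explicitly writing $D=n(-1+T)$, but the computation is identical: it uses $c(D^c_\infty)\le\det(D^c_\infty)$ from Proposition~2.2 of \cite{qazaqzeh2013remark} (which is exactly your bound $c(T)\le\det(d(T))$) together with $\det(D^{c\leftarrow-\frac12})=2\det(D^c_0)-\det(D^c_\infty)$ and the strict inequality $\det(D^c_0)>\det(D^c_\infty)$, then finishes with Theorem~2.3 of \cite{qazaqzeh2013remark}.
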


\begin{proof}
We first consider the case where the quasi-alternating dealternator extension given by Theorem \ref{th3.2} is $D^{c \leftarrow-\frac{1}{2}}$. Note that this case occurs when $\det \left( D^{c}_{0} \right) > \det \left( D^{c}_{\infty} \right)$.
Suppose that $c \left( D^{c \leftarrow-\frac{1}{2}} \right) > \det \left( D^{c \leftarrow-\frac{1}{2}} \right)$, then

\begin{align}
c(D)+1 = c\left( D^{c}_{\infty} \right) + 2 > 2 \det \left( D^{c}_{0} \right) - \det \left( D^{c}_{\infty} \right).
\label{cond3}
\end{align}

But since $D^{c}_{\infty}$ is alternating and reduced, then by Proposition 2.2 in \citep{qazaqzeh2013remark} we get that $c\left( D^{c}_{\infty} \right) \leq \det\left( D^{c}_{\infty} \right)$. Hence, (\ref{cond3}) implies that $$\det\left( D^{c}_{\infty} \right) + 2 > 2 \det\left( D^{c}_{0} \right) - \det\left( D^{c}_{\infty} \right).$$
This is equivalent to
$$\det\left( D^{c}_{\infty} \right) \geq \det\left( D^{c}_{0} \right).$$
We get a contradiction. Then we conclude that $c \left( D^{c \leftarrow-\frac{1}{2}} \right) \leq \det \left( D^{c \leftarrow-\frac{1}{2}} \right)$.
Now if $\mathcal{D}$ is a rational extension of $D^{c \leftarrow-\frac{1}{2}}$ at $c$, which is a quasi-alternating crossing in $D^{c \leftarrow-\frac{1}{2}}$, then by Theorem 2.3 in \citep{qazaqzeh2013remark} we get that $c(\mathcal{D}) \leq \det(\mathcal{D})$.\\
 We prove in a similar way the result when the quasi-alternating dealternator extension given by Theorem \ref{th3.2} is $D^{c \leftarrow-2}$.
\end{proof}

\begin{cor}
Every quasi-alternating and almost alternating link provided by Theorem \ref{th4.5} or Theorem \ref{th4.6} satisfies Conjecture \ref{conj1}.

\label{mycor4}
\end{cor}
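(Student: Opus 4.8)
The plan is to reduce the statement to Propositions \ref{prop5.1} and \ref{prop5.2} and to Theorem 2.3 of \cite{qazaqzeh2013remark} by tracking how the links of Theorems \ref{th4.5} and \ref{th4.6} are assembled: every step in those constructions is either a dealternator extension, controlled by Proposition \ref{prop5.2}, or a rational extension at a quasi-alternating crossing, along which the inequality $c\le\det$ is known to propagate. First I would dispose of Theorem \ref{th4.6}: its proof realizes every quasi-alternating and almost alternating link it produces as a link furnished by Theorem \ref{th4.5} applied to the strongly alternating tangle $S$ constructed there (for which $n(S)$ is prime, $\det(n(S))>\det(d(S))$, and $N(-1+S)=N(T)$ is alternating and non-split, hence quasi-alternating). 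So it is enough to prove the statement for the links of Theorem \ref{th4.5}.

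So let $L=N(-1+t_1+\dots+t_k+T)$ be as in Theorem \ref{th4.5} and let $\Delta=n(-1+t_1+\dots+t_k+T)$ be the diagram exhibited in its proof; since $c(L)\le c(\Delta)$ and $\det(\Delta)=\det(L)$, it suffices to prove $c(\Delta)\le\det(\Delta)$. The proof of Theorem \ref{th4.5} builds $\Delta$ in two stages. In the first stage one passes from $n(-1+T)$ to $n(-1+T_k)$, and $n(-1+T_k)$ is the dealternator extension by $-\frac{1}{2}$ of $D=n(-1+T_{k-1})$ (with $T_0=T$). This $D$ is almost alternating, dealternator reduced and dealternator connected because $T_{k-1}$ is strongly alternating, and $\mathcal{L}(D)=N(-1+T_{k-1})$ is quasi-alternating, by the lemma stated after Theorem \ref{th4.5}, or, when $k=1$, by the hypothesis of Theorem \ref{th4.5} itself. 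Since in addition $\det(n(T_{k-1}))>\det(d(T_{k-1}))$, the dealternator extension furnished by Theorem \ref{th3.2} is precisely the one by $-\frac{1}{2}$, so Proposition \ref{prop5.2} applies to $D$ and yields directly $c\bigl(n(-1+T_k)\bigr)\le\det\bigl(n(-1+T_k)\bigr)$; note that, unlike Proposition \ref{prop5.1}, Proposition \ref{prop5.2} does not require the hypothesis $c(D)\le\det(D)$.

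In the second stage one obtains $\Delta$ from $n(-1+T_k)$ by performing successively $k$ rational extensions, the $i$-th one at the top crossing of the $i$-th tangle $\frac{1}{2}$ by the rational tangle $\frac{t_i}{1-t_i}$, using $\frac{t_i}{1-t_i}*1=t_i$. Each of these crossings is quasi-alternating in the diagram to which the extension is applied, by the argument opening the proof of Theorem \ref{th4.5}: its two smoothings are diagrams of links already known to be quasi-alternating, namely further instances of Theorem \ref{th4.5} with fewer rational parameters, together with connected sums of alternating links, and the determinant splits by the identities of Proposition \ref{prop3.2}. Hence Theorem 2.3 of \cite{qazaqzeh2013remark}, which propagates $c\le\det$ under a rational extension at a quasi-alternating crossing, applies at each of the $k$ steps, and, starting from $c\bigl(n(-1+T_k)\bigr)\le\det\bigl(n(-1+T_k)\bigr)$, produces $c(\Delta)\le\det(\Delta)$, hence $c(L)\le\det(L)$.

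I expect the main obstacle to be the bookkeeping in the second stage: one must check that once the first $j$ of the vertical tangles $\frac{1}{2}$ have been replaced by $t_1,\dots,t_j$, the remaining $\frac{1}{2}$-crossings are still quasi-alternating, so that Theorem 2.3 of \cite{qazaqzeh2013remark} can be reapplied at the next step. This is not automatic, but it is handled exactly as in the proof of Theorem \ref{th4.5}. Everything else is a direct assembly of Proposition \ref{prop5.2}, the determinant identities of Proposition \ref{prop3.2}, and Theorem 2.3 of \cite{qazaqzeh2013remark}.
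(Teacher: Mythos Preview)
Your proof is correct and follows essentially the same route as the paper's, which simply asserts that the links of Theorems~\ref{th4.5} and~\ref{th4.6} are obtained by dealternator extensions of dealternator reduced and connected diagrams representing quasi-alternating links and then invokes Proposition~\ref{prop5.2}. Your version is more explicit: you separate the dealternator extension step (handled by Proposition~\ref{prop5.2}) from the subsequent rational extensions at the $\frac{1}{2}$-crossings (handled by Theorem~2.3 of \cite{qazaqzeh2013remark}), which is exactly the mechanism underlying Proposition~\ref{prop5.2} itself, so the two arguments coincide in substance.
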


\begin{proof}
The links provided by Theorem \ref{th4.5} and Theorem \ref{th4.6} are obtained by dealternator extensions of dealternator reduced and dealternator connected diagrams which represent quasi-alternating links. Then, they satisfy Conjecture \ref{conj1} by Proposition \ref{prop5.2}.
\end{proof}

\begin{prop}
Let $T$ be a strongly alternating tangle diagram such that $n(T)$ is prime. Let $t$ be a rational tangle, $0 < t < 1$. If the link $N(-t+T)$ is quasi-alternating, then $c(N(-t+T)) \leq \det(N(-t+T))$.
\label{prop5.3}
\end{prop}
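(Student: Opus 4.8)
The plan is to control $c\bigl(N(-t+T)\bigr)$ through the Kauffman polynomial and then compare it with the determinant, which is computed via Propositions \ref{prop3.2} and \ref{prop3.3}. First I would unpack the hypotheses. Being quasi-alternating, $N(-t+T)$ has nonzero determinant, so Proposition \ref{prop4.3} applies and $N(-t+T)$ is almost alternating; moreover, writing $t=\alpha/\beta$ in lowest terms and using Proposition \ref{prop3.2}(2) together with Proposition \ref{prop3.3} (which is available precisely because the link is quasi-alternating),
\[
\det\bigl(N(-t+T)\bigr)=\beta\,\det(n(T))-\alpha\,\det(d(T))>0 .
\]

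Next, for the crossing number I would invoke Lemma \ref{lem4.4}: since $T$ is strongly alternating, $n(T)$ is prime and $0<t<1$, one has $\deg_z\Lambda_{n(-t+T)}(a,z)=c\bigl(n(-t+T)\bigr)-2$. Because the $z$-degree of the Kauffman polynomial is a link invariant, Theorem 4 of \cite{thistlethwaite1988kauffman} gives $c\bigl(n(-t+T)\bigr)-2=\deg_z\Lambda_{N(-t+T)}\le c\bigl(N(-t+T)\bigr)-1$; combined with the obvious inequality $c\bigl(N(-t+T)\bigr)\le c\bigl(n(-t+T)\bigr)$ this pins $c\bigl(N(-t+T)\bigr)$ to one of the two values $c(n(-t+T))-1$ and $c(n(-t+T))$. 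In particular it suffices to prove $c\bigl(n(-t+T)\bigr)\le\det\bigl(N(-t+T)\bigr)$.

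For this last inequality the plan is an induction on the number of crossings of the standard diagram of $-t$, i.e.\ on the sum of the partial quotients of the continued fraction of $t$, the base case being $t=\tfrac12$. In the inductive step one smooths, inside $n(-t+T)$, the innermost crossing of the twist region $-t$: one smoothing is $n(-t''+T)$ for a rational number $t''$ whose continued fraction is shorter than that of $t$, while the other smoothing represents a connected sum of alternating links; a Proposition \ref{prop3.1}--style computation shows that the two determinants add up to $\det(n(-t+T))$. One then checks that this crossing is in fact a quasi-alternating crossing of $n(-t+T)$, so that $N(-t''+T)$ is again quasi-alternating and the induction hypothesis applies; writing $\det(n(-t+T))=\det(n(-t''+T))+\delta$ with $\delta\ge 1$ the determinant of the alternating smoothing, and $c(n(-t+T))=c(n(-t''+T))+1$, the induction hypothesis $c(n(-t''+T))\le\det(n(-t''+T))$ together with Proposition 2.2 of \cite{qazaqzeh2013remark} (which gives $c(\text{alternating smoothing})\le\delta$) yields $c(n(-t+T))\le\det(n(-t+T))$. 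The base case $t=\tfrac12$ is treated in the same spirit: here $n(-t+T)=n(-1+T_1)$ with $T_1=\tfrac12+T$ is dealternator connected and dealternator reduced, $n(T_1)$ is prime by Lemma \ref{lem2.2} (since $d(T_1)$ is a connected sum), and the comparison is carried out exactly as in the proof of Proposition \ref{prop5.2}.

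The delicate point — the one I expect to be the real obstacle — is verifying that the relevant twist-region crossing is genuinely quasi-alternating in $n(-t+T)$, since being a quasi-alternating link only guarantees the existence of \emph{some} quasi-alternating crossing in \emph{some} diagram. Here one must combine the recursive definition of quasi-alternating, Corollary \ref{cor1} (an almost alternating quasi-alternating link has determinant at least $8$, which rules out the degenerate small cases), and Theorem \ref{th1.1} (all crossings created by a rational extension of a quasi-alternating crossing are themselves quasi-alternating), in order to propagate quasi-alternateness down the twist region and make the induction go through.
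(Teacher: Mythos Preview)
Your opening observations are fine and in fact match the paper: Lemma~\ref{lem4.4} gives $\deg_z\Lambda_{n(-t+T)}=c(n(-t+T))-2$, and since the $z$-degree is a link invariant this already forces $c(N(-t+T))$ to be within one of $c(n(-t+T))$. The paper, however, finishes in one more line, by quoting Theorem~1.2 of \cite{teragaito2015quasi}: for any quasi-alternating link $L$ one has $\deg_z\Lambda_L\le \det(L)-2$. Applied to $L=N(-t+T)$ this gives $c(n(-t+T))-2\le \det(N(-t+T))-2$, hence $c(N(-t+T))\le c(n(-t+T))\le \det(N(-t+T))$. No induction is needed.

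Your inductive route, by contrast, has a genuine gap at precisely the point you flag as delicate. To run the induction you must know that $N(-t''+T)$ is again quasi-alternating; but nothing in the hypotheses guarantees this. Proposition~\ref{prop3.3} only gives a necessary condition $\det(n(T))/\det(d(T))>t''$, which is automatic here since $t''<t$, but Remark~\ref{rmq6} explicitly records that this condition is not sufficient. The tools you propose to close the gap do not help: Theorem~\ref{th1.1} says that \emph{extending} a quasi-alternating crossing preserves quasi-alternateness, not that collapsing part of a rational tangle does, and Section~6 of the paper is devoted precisely to showing that the converse of Theorem~\ref{th1.1} fails. Corollary~\ref{cor1} gives a determinant lower bound, not quasi-alternateness of a smoothing. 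So as written the induction does not go through, and the clean way out is the one the paper takes: invoke Teragaito's bound on $\deg_z\Lambda$ for quasi-alternating links and be done.
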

\begin{proof}
By Lemma \ref{lem4.4}, we have $\deg_z\Lambda_{n(-t+T)}=c(n(-t+T))-2$. By using Theorem 1.2. in \cite{teragaito2015quasi}, we have $\deg_z\Lambda_{n(-t+T)}=c(n(-t+T))-2 \leq \det(N(-t+T)) - 2$. Then the result follows easily.
\end{proof}
\begin{cor}
Every quasi-alternating Montesinos link $L$ satisfies Conjecture \ref{conj1}.
\label{mycor3}
\end{cor}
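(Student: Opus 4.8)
The plan is to distinguish two cases according to whether $L$ is alternating. Recall first that every quasi-alternating link is non-split, since a split link has vanishing determinant while a quasi-alternating link does not. Hence, if $L$ is alternating it is a non-split alternating link, and Proposition 2.2 in \cite{qazaqzeh2013remark} gives $c(L)\le\det(L)$ immediately.

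Suppose now that $L$ is not alternating. As a non-alternating quasi-alternating Montesinos link, $L$ or its mirror image — which has the same crossing number and determinant — has reduced form $M(-1;\hat t_1,\dots,\hat t_n)$ with $n\ge 3$, by Theorem \ref{th4.1} together with Theorem 10 of \cite{lickorish1988some}. Replacing $L$ by this mirror image if necessary, and writing $\hat t_k=\tfrac{\alpha_k}{\beta_k}$ in lowest terms with $\alpha_k>\beta_k\ge 1$ (so that $\alpha_k\ge 2$ and $0<\tfrac1{\hat t_k}<1$), we have $L=N\big(-1+\tfrac1{\hat t_1}+\cdots+\tfrac1{\hat t_n}\big)$. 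Put $t=1-\tfrac1{\hat t_1}$, which lies in $(0,1)$, and $T=\tfrac1{\hat t_2}+\cdots+\tfrac1{\hat t_n}$. The rational tangle identity $-1+\tfrac1{\hat t_1}=-t$ then gives $L=N(-t+T)$, and the goal is to bring this presentation within reach of Proposition \ref{prop5.3}.

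For that I must verify that $T$ is a strongly alternating tangle diagram with $n(T)$ prime. Being a sum of the positive standard rational tangle diagrams $\tfrac1{\hat t_k}$, $2\le k\le n$, the diagram $T$ is alternating, connected, reduced and locally unknotted, and $n(T)=M(0;\hat t_2,\dots,\hat t_n)$ has a connected reduced alternating standard diagram. Iterating the identity $d(A+B)=d(A)\#d(B)$ (already used in the proof of Proposition \ref{prop3.2}) shows that $d(T)=d(\tfrac1{\hat t_2})\#\cdots\#d(\tfrac1{\hat t_n})$ is a connected sum of $n-1\ge 2$ nontrivial reduced alternating diagrams; hence $d(T)$ is connected and reduced, so $T$ is strongly alternating, but $d(T)$ represents a composite link and is therefore not a prime diagram. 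By Lemma \ref{lem2.2}, $n(T)$ must then be prime. Since $N(-t+T)=L$ is quasi-alternating and $0<t<1$, Proposition \ref{prop5.3} applies and yields $c(L)\le\det(L)$, which completes the proof.

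I expect the delicate point to be exactly this last verification that $T$ is strongly alternating with $n(T)$ prime: the things that really need care are that the factors $d(\tfrac1{\hat t_k})$ are genuinely nontrivial (so that $d(T)$ is composite, hence not a prime diagram) and that all the closures involved remain connected and reduced. Once these are granted, Lemma \ref{lem2.2} supplies the primeness of $n(T)$ for free, and everything else is routine manipulation with Conway's rational tangle calculus.
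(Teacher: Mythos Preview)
Your proof is correct and follows essentially the same route as the paper: split into the alternating case (handled by Proposition 2.2 of \cite{qazaqzeh2013remark}) and the non-alternating case, rewrite $L$ as $N(-t+T)$ with $t=1-\tfrac{\beta_1}{\alpha_1}$ and $T=\tfrac{\beta_2}{\alpha_2}+\cdots+\tfrac{\beta_n}{\alpha_n}$, then invoke Proposition \ref{prop5.3}. The only difference is that you spell out why $n(T)$ is prime (via the compositeness of $d(T)$ and Lemma \ref{lem2.2}) more explicitly than the paper does.
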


\begin{proof}
The result for quasi-alternating Montesinos links which are also alternating is provided by Proposition 2.2 in \citep{qazaqzeh2013remark}.

Let $L$ be a non-alternating, quasi-alternating Montesions link. Then by Theorem \ref{th4.1} there exist an integer $n \geq 3$ and an ordered set of rationals $\lbrace \frac{\alpha_k}{\beta_k} \rbrace_{1 \leq k \leq n}$ all greater than one where $L$ is isotopic, up to mirror image, to the link $M \left( -1;\frac{\alpha_1}{\beta_1}, ... , \frac{\alpha_n}{\beta_n} \right)$ which is the same as $N\left(-1 + \frac{\beta_1}{\alpha_1}, ... , \frac{\beta_n}{\alpha_n} \right)$. Put $t=\frac{\alpha_1 -\beta_1}{\alpha_1}$ and $T=\frac{\beta_2}{\alpha_2}+...+\frac{\beta_n}{\alpha_n}$. The tangle $t$ is a rational tangle and we have $-t=-1+\frac{\beta_1}{\alpha_1}$. On the other hand, the tangle diagram $T$ is strongly alternating and $n(T)$ is prime by Lemma \ref{lem2.2}. Now since the link $L$ is equivalent to $N(-t+T)$, the result follows by Proposition \ref{prop5.3}.
\end{proof}

\section{The converse of Theorem \ref{th1.1} is false}
Let $D$ be a link diagram in the plane. Suppose that there exists a disk in the plane meeting $D$ transversely four times and enclosing a tangle diagram $T$. We say that $T$ is \textit{embedded} in $D$.

Take an almost alternating diagram $D$ such that $\mathcal{L}(D)$ is quasi-alternating and denote by $c$ the dealternator of $D$. Without loss of generality, let us assume that $\det (D^{c}_{0}) > \det (D^{c}_{\infty})$. By Theorem \ref{th3.2} the diagram $D^{c\leftarrow \frac{1}{-2}}$ is quasi-alternating at each crossing of the tangle $\frac{1}{-2}$. When replacing this tangle by one of its crossings, we get back $D$.  This diagram will not be quasi-alternating at $c$ as one can see by using Corollary \ref{cor2}. This observation supports the following question asked by Chbili and Qazaqzeh in \cite{qazaqzeh2015characterization}.

\begin{qst}[Question 1, \citep{qazaqzeh2015characterization}]
Let $D$ be a quasi-alternating diagram at a crossing $c$ that is a part of a rational tangle diagram $t$ embedded in $D$. Let $D'$ be the link diagram obtained by replacing the projection disk of $t$ by the single crossing $c$. Is the link $\mathcal{L}(D')$ quasi-alternating ?
\label{qst1}
\end{qst}
In fact Question \ref{qst1} asks if the converse of Theorem \ref{th1.1} is true. In order to give an answer, we exhibit some almost alternating diagrams representing non-quasi-alternating links whose dealternator extensions yield quasi-alternating diagrams. 

\begin{lem}
Let $L=N(T+S)$ be a semi-alternating link. Then $L$ admits a dealternator reduced and dealternator connected almost-alternating diagram $D$. Furthermore, if we denote by $c$ the dealternator of $D$, then the link $\mathcal{L}\left(D^{c\leftarrow \frac{1}{-2}} \right)$ is a non-split alternating link.
\label{lem6.1}
\end{lem}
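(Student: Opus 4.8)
The plan is to build the desired almost alternating diagram directly from the semi-alternating presentation $L = N(T+S)$, where $T$ and $S$ are strongly alternating tangle diagrams of different types. The key geometric idea is that the ``clash'' of types at the junction of $T$ and $S$ in the numerator closure $n(T+S)$ can be resolved by inserting a single negative crossing, which acts as a dealternator. More precisely, I would first argue that the sum $T+S$ of two strongly alternating tangles of different types, while itself non-alternating, differs from an alternating tangle by one crossing at the seam between $T$ and $S$: writing $S$ in its type-1 normalization forces the over/under pattern at the $NE_T$--$NW_S$ and $SE_T$--$SW_S$ junctions to be non-alternating, and inserting the tangle $-1$ between $T$ and $S$ (i.e. passing to $n(-1 + T' + S)$ for an appropriate alternating $T'$ with $T = -1 + T'$ up to the relevant tangle equivalence, or symmetrically absorbing the defect) repairs the alternation everywhere except at that one inserted crossing. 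This gives an almost alternating diagram $D$ of $L$ with a single dealternator $c$.

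The second step is to check that $D$ is dealternator connected and dealternator reduced. For connectedness: the two smoothings $D^c_0$ and $D^c_\infty$ are (up to the tangle equivalences above) of the form $n(T'' + S)$ and $d(\cdot)\#(\cdot)$-type closures built from $T$ and $S$; since $T$ and $S$ are both connected tangle diagrams, $n(T)$, $d(T)$, $n(S)$, $d(S)$ are all connected by the results on connected tangle diagrams recalled earlier (the graphs $G_n$, $G_d$ are connected), and gluing connected pieces along the closure arcs keeps the smoothings connected. For reducedness: a nugatory crossing in $D^c_0$ or $D^c_\infty$ would, after inserting back the dealternator, produce a nugatory crossing or a forbidden reduction in the strongly alternating tangles $T$, $S$ themselves, contradicting that $T$ and $S$ are reduced; I would spell this out using the Tait-graph description, where a reduction corresponds to a cut-edge or cut-vertex in $G_d(T)$ or $G_d(S)$, which strong alternation forbids.

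For the last assertion, I must identify $\mathcal{L}\left(D^{c \leftarrow \frac{1}{-2}}\right)$. Replacing the dealternator $-1$ by the rational tangle $\frac{1}{-2} = -1 + \frac{1}{2}$ and using the tangle equivalences $p + t + q = t+p+q$ and the standard rational manipulations recalled in the Preliminaries, the diagram $n(-1 + \frac{1}{2} + T' + S)$ becomes $n(T' + \frac{1}{2} + S)$ type pattern — the crucial point being that inserting $\frac{1}{2}$ (two crossings of the sign making the seam alternating) between the two type-mismatched tangles restores global alternation. So the resulting diagram is an alternating diagram, and it is non-split because it is the numerator closure of a connected tangle (connectedness of $T$ and $S$ is preserved), so its Tait graph is connected. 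I expect the main obstacle to be the first step: carefully verifying the over/under bookkeeping at the $T$--$S$ seam to confirm that exactly one extra crossing is needed and that its sign is negative in our conventions (recall we always orient so that the dealternator edge is negative), and handling the type-1 versus type-2 normalization of $S$ so that the insertion genuinely alternates with both neighbours. Once the local picture at the seam is pinned down, the connectedness and reducedness claims and the final alternation computation are routine applications of the graph-theoretic dictionary already set up.
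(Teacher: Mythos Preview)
Your construction has a genuine gap at precisely the step you flagged as the main obstacle. In the semi-alternating diagram $n(T+S)$ with $T$ of type~1 and $S$ of type~2, \emph{all four} connecting arcs fail to alternate: the two internal seams $NE_T$--$NW_S$, $SE_T$--$SW_S$ and the two numerator closure arcs $NW_T$--$NE_S$, $SW_T$--$SE_S$ (at each of them the strand exits an over and enters an over, or exits an under and enters an under). Inserting a single horizontal crossing between $T$ and $S$, or writing $T=-1+T'$ and passing to $n(-1+T'+S)$, only affects the two arcs through that one crossing; the type mismatch is shifted rather than removed, and the remaining junctions stay non-alternating. Moreover, for $T'+S$ to be alternating one would need $T'$ of type~2, whereas $T'=1+T$ is again of type~1, so no such decomposition exists for a general strongly alternating $T$. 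Your proposed $D$ is therefore not almost alternating, and the identification of $D^{c\leftarrow \frac{1}{-2}}$ collapses with it.

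The paper's construction is genuinely different. It replaces $S$ by the type~1 tangle $S'=\left(-\frac{1}{S}_{cc}\right)_h$ (inversion, mirror, horizontal flip) and takes
\[
D \;=\; n\left(-1+(S'+1)*(T+1)\right),
\]
using the vertical product $*$ rather than a horizontal sum. An explicit planar isotopy (given pictorially) yields $\mathcal{L}(D)=N(T+S)$; since $(S'+1)*(T+1)$ is assembled from type~1 pieces via $+1$ and $*$, it is alternating, so the single $-1$ is the dealternator and $D$ is dealternator reduced and dealternator connected. A second explicit isotopy shows that $D^{c\leftarrow \frac{1}{-2}}$ is a connected alternating diagram. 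The essential point is that rotating $S$ into $S'$ and then stacking vertically is what allows the two tangles to merge into a single alternating block; a purely horizontal rearrangement, as in your proposal, cannot achieve this.
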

\begin{proof}
Assume without loss of generality that $T$ is of type 1 and $S$ is of type 2. Let $S'$ denote the type 1 tangle diagram $(-\frac{1}{S}_{cc})_h$ and $D$ denote the link diagram $n(-1+(S'+1)*(T+1))$. We have $\mathcal{L}(D) = N(T+S)$ as shown in Fig. \ref{fig14}. Furthermore, the diagram $D$ is almost-alternating, dealternator reduced, and dealternator connected. 

\begin{figure}[!htbp]
\centering
\includegraphics[width=0.4\linewidth]{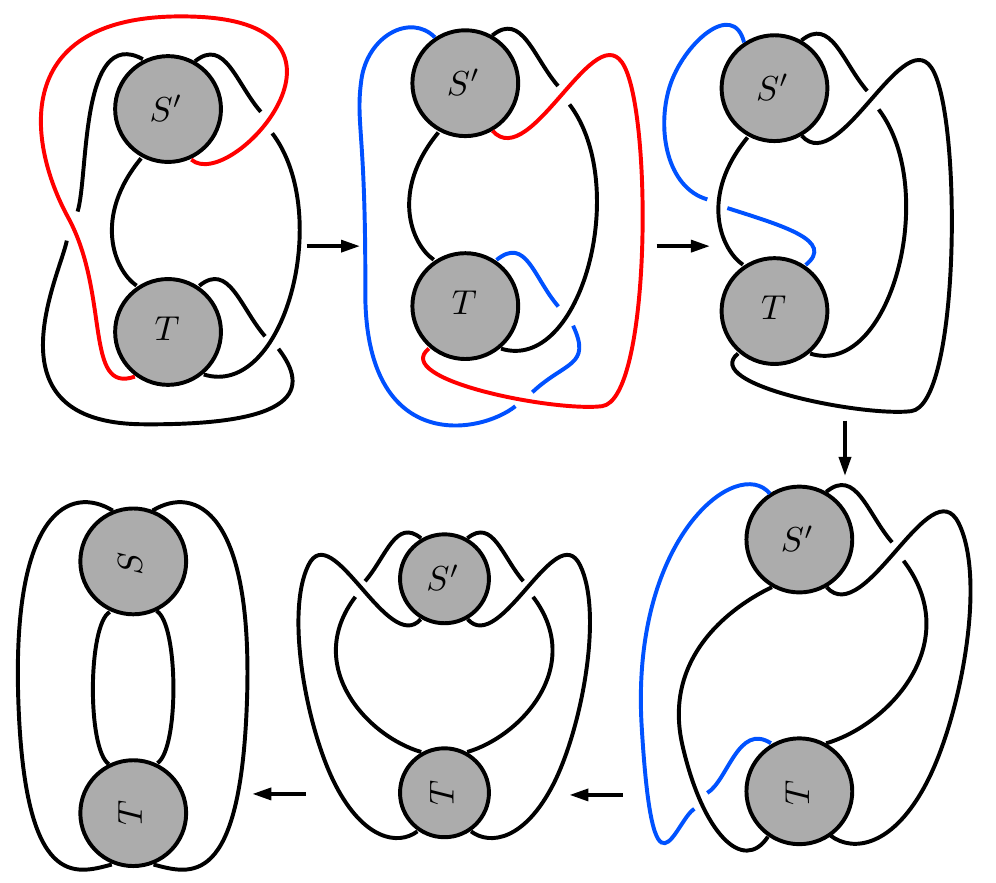} 
\caption{The link diagram $n(-1 +(S'+1)*(T+1))$ is equivalent to the semi alternating diagram $n(T+S)$.} 
\label{fig14}
\end{figure}

Denote by $c$ the dealternator of $D$. Fig. \ref{fig15} shows that $D^{c\leftarrow \frac{1}{-2}}$ is equivalent to a connected alternating diagram. Hence $D^{c\leftarrow \frac{1}{-2}}$ is a non-split alternating link. 

\begin{figure}[!htbp]
\centering
\includegraphics[width=0.6\linewidth]{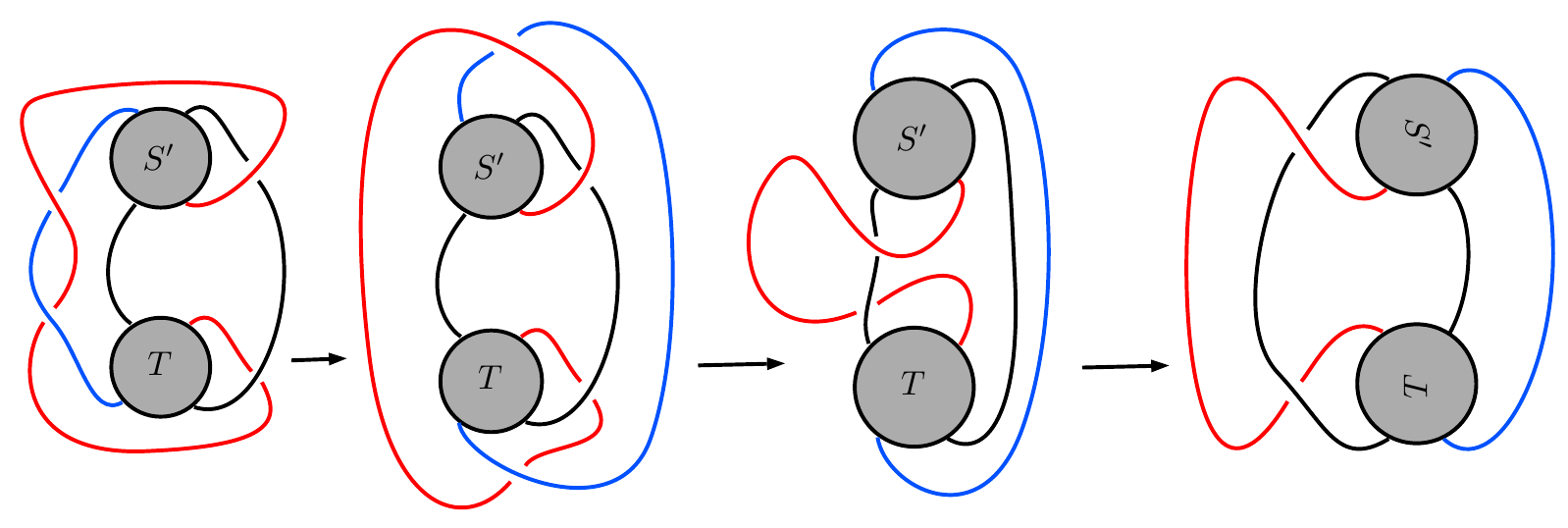} 
\caption{The link diagram $D^{c\leftarrow \frac{1}{-2}}$ is equivalent to an alternating diagram.} 
\label{fig15}
\end{figure}
\end{proof}

\begin{prop}
There exists a link diagram $D$, quasi-alternating at some crossing $c$ that belongs to an embedded rational tangle $t$ such that, the replacement of $t$ with the single crossing $c$ yields a non-quasi-alternating link.
\label{prop6.2}
\end{prop}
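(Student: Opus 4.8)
## Proof proposal for Proposition \ref{prop6.2}

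The plan is to exhibit a concrete family of examples using Lemma \ref{lem6.1} together with Theorem \ref{th3.2}. Start by fixing a semi-alternating link $L = N(T+S)$, where $T$ is a strongly alternating tangle of type 1 and $S$ is a strongly alternating tangle of type 2, chosen so that the resulting determinants are arranged favorably. By Lemma \ref{lem6.1}, $L$ admits a dealternator reduced and dealternator connected almost alternating diagram $D$, and moreover $\mathcal{L}(D^{c \leftarrow \frac{1}{-2}})$ is a non-split alternating link, hence quasi-alternating. First I would verify that the determinant condition $\det(D^{c}_0) > \det(D^{c}_\infty)$ holds for $D$ (this is the hypothesis under which the $-\frac12$ extension is the correct one in Theorem \ref{th3.2}); this is where the specific choice of $T$ and $S$ matters, and one can compute the two determinants from the graph-theoretic formulas of Proposition \ref{prop3.2}, or simply note that since $D^{c \leftarrow \frac{1}{-2}}$ is alternating and non-split we have $\det(D^{c \leftarrow \frac{1}{-2}}) \neq 0$, which by Corollary \ref{cor3} forces $2\det(D^{c}_0) \neq \det(D^{c}_\infty)$, and combining with the lower bound from Corollary \ref{cor1} one pins down which inequality holds.

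Next, I would apply Theorem \ref{th3.2}: since $\det(D^{c}_0) > \det(D^{c}_\infty)$, the dealternator extension $D^{c \leftarrow \frac{1}{-2}}$ is quasi-alternating at each crossing of the inserted tangle $\frac{1}{-2}$. In particular, $D^{c \leftarrow \frac{1}{-2}}$ is a link diagram that is quasi-alternating at a crossing $c$ which belongs to the embedded rational tangle $t = \frac{1}{-2}$. Now perform the reverse operation of replacing the projection disk of $t$ by the single crossing $c$: this recovers the diagram $D$, which represents the semi-alternating link $L = N(T+S)$. Since semi-alternating links are adequate and adequate links have thick Khovanov homology (Proposition 7 in \cite{khovanov2003patterns}), $L$ is not quasi-alternating. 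Therefore $D' := D$ is the desired diagram: it is obtained from the quasi-alternating-at-$c$ diagram $D^{c \leftarrow \frac{1}{-2}}$ by collapsing the embedded rational tangle $t$ to the crossing $c$, yet $\mathcal{L}(D')$ is not quasi-alternating.

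The main obstacle I anticipate is cleanly establishing the determinant inequality $\det(D^{c}_0) > \det(D^{c}_\infty)$ for the particular $D$ produced by Lemma \ref{lem6.1}, since the two smoothings need to be identified explicitly in terms of numerator and denominator closures of $T$, $S$, and the auxiliary tangle $S' = (-\frac{1}{S}_{cc})_h$. One needs to track the tangle isotopies in Figures \ref{fig14} and \ref{fig15} carefully to see what $D^{c}_0$ and $D^{c}_\infty$ are as links (likely $D^c_\infty$ is a connected sum of alternating rational links while $D^c_0$ is itself alternating), and then invoke Proposition \ref{prop3.2} or the Tait-graph spanning-tree count of Lemma \ref{th2.1}. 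A clean shortcut, which I would adopt to avoid grinding through the tangle bookkeeping, is the argument sketched above: the mere fact that $D^{c \leftarrow \frac{1}{-2}}$ is alternating and non-split (so has nonzero determinant) already forces, via Corollaries \ref{cor1} and \ref{cor3}, that exactly one of the two extensions $-\frac12$ or $-2$ is quasi-alternating, and since Lemma \ref{lem6.1} tells us it is the $-\frac12$ one that yields an alternating (hence quasi-alternating) link, Theorem \ref{th3.2} applies in the $\det(D^c_0) > \det(D^c_\infty)$ case. Finally, I would close by remarking that this answers Question \ref{qst1} in the negative, i.e. the converse of Theorem \ref{th1.1} is false, and perhaps record an explicit smallest example (e.g. starting from the semi-alternating link $8^3_3$ or a small semi-alternating link from the tables) to make the counterexample concrete.
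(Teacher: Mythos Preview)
Your proposal has a genuine gap. You invoke Theorem \ref{th3.2} to conclude that $D^{c\leftarrow\frac{1}{-2}}$ is quasi-alternating at each crossing of the inserted tangle, but Theorem \ref{th3.2} has as a standing hypothesis that the almost alternating diagram $D$ represents a \emph{quasi-alternating} link. In your setup $\mathcal{L}(D)$ is the semi-alternating link $N(T+S)$, which is \emph{not} quasi-alternating; so Theorem \ref{th3.2} does not apply. Worse, the conclusion you want is actually false: if you smooth $D^{c\leftarrow\frac{1}{-2}}$ at either crossing of the tangle $\frac{1}{-2}$, one of the two resolutions is exactly $\mathcal{L}(D)$, which is not in $\mathcal{Q}$. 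Hence $D^{c\leftarrow\frac{1}{-2}}$ is a quasi-alternating \emph{link} (being alternating non-split), but it is \emph{not} quasi-alternating \emph{at} the crossing $c$. Your ``shortcut'' via Corollaries \ref{cor1} and \ref{cor3} only addresses the determinant condition and does nothing to rescue the failed hypothesis on $\mathcal{L}(D)$.

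The paper's proof avoids this trap by extending one step further, to $-\frac{1}{3}$ rather than $-\frac{1}{2}$, and then appealing to Corollary \ref{cor5} instead of Theorem \ref{th3.2}. The point is that Corollary \ref{cor5} only requires $N(-\tfrac{1}{2}+T')$ to be quasi-alternating (which Lemma \ref{lem6.1} supplies, since that link is alternating non-split), and then gives that $n(-\tfrac{1}{3}+T')$ is quasi-alternating at each crossing of the vertical tangle $-\tfrac{1}{3}$. Collapsing that tangle to a single crossing recovers the almost alternating diagram $d$ representing the semi-alternating link, which is not quasi-alternating. So the fix is minor in spirit but essential: you must pass to the $-\tfrac{1}{3}$ extension and invoke Corollary \ref{cor5}, because at the $-\tfrac{1}{2}$ level one of the two smoothings is precisely the bad link you are trying to produce.
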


\begin{proof}
Let  $T$ and $S$ be strongly alternating tangle diagrams of type 1 and type 2 respectively and put $S'=(-\frac{1}{S}_{cc})_h$. Take $D$ to be the link diagram $n(-\frac{1}{3}+(S'+1)*(T+1))$ and denote by $c$ the top crossing of the vertical tangle diagram $-\frac{1}{3}$. Denote by $d$ the almost alternating diagram $n(-1+(S'+1)*(T+1))$ and let $c'$ denote its dealternator. Clearly $D$ is exactly the diagram $d^{c'\leftarrow\frac{1}{-3}}$. Lemma \ref{lem6.1} shows that the link $\mathcal{L}(d^{c'\leftarrow\frac{-1}{2}}) = N(\frac{-1}{2} + (S'+1)*(T+1))$ is alternating non-split and then it is quasi-alternating. By Corollary \ref{cor5} $D$ is quasi-alternating at the crossing $c$.

If we replace in $D$ the vertical tangle diagram $-\frac{1}{3}$ with the single crossing $c$ we obtain the diagram $d$. Lemma \ref{lem6.1} shows that the link $\mathcal{L}(d)$ is semi-alternating, hence non-quasi-alternating.
\end{proof}

Proposition \ref{prop6.2} answers negatively Question \ref{qst1} by exhibiting an infinite family of counterexamples.

\begin{rmq}
Note that Proposition \ref{prop6.2} shows also that the converse of Theorem 3.3 in \cite{qazaqzeh2015characterization} is false.
\end{rmq}

To complete this work, we ask some questions.

  Let $t$ be a rational tangle diagram, $0<t<1$, and $T$ be a strongly alternating tangle diagram. The Proposition \ref{prop3.3}, provides a necessary (but not sufficient) condition for the link $L=N(-t+T)$ to be quasi-alternating, namely $\frac{\det \left( n(T) \right)}{\det \left( d(T) \right)} > t$. By analogy with the characterization of quasi-alternating Montesinos links in Theorem \ref{th4.1}, we can expect a characterization of quasi-alternateness of links $N(-t+T)$ by some algebraic relation between $t$ and $\frac{\det \left( n(T) \right)}{\det \left( d(T) \right)}$. Hence the following question

\begin{qst}
Do the rational numbers $t$ and $\dfrac{\det \left( n(T) \right)}{\det \left( d(T) \right)}$ suffice to characterize the quasi-alternateness of the link $N(-t+T)$?
\end{qst}

On the other hand, Chbili and Qazaqzeh recently stated a conjecture about quasi-alternating links depending on the coefficients of their Jones polynomials. Write the Jones polynomial of a link $L$ as $V_L(t) = \displaystyle \sum_{i=0}^{m} a_it^i$, where $m \geq 0$, $a_0 \neq 0$, and $a_m \neq 0$. The conjecture was formulated as follows.
\begin{conj}[Conjecture  2.3, \cite{chbili2019jones}]
If $L$ is a prime quasi-alternating link, other than $(2, n)$-torus link, then the coefficients of the Jones polynomial of $L$ satisfy $a_ia_{i+1} < 0$ for all $0 \leq i \leq m-1$.
\label{conj3}
\end{conj}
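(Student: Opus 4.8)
The plan is to route the conjecture through Khovanov homology, where the quasi-alternating hypothesis has a clean homological incarnation. Recall that by the theorem of Manolescu and Ozsv\'ath every quasi-alternating link is Khovanov-thin: its reduced Khovanov homology $\widetilde{\mathrm{Kh}}(L)$, taken over $\Z$, is supported on a single diagonal $j-2i=s$ of the $(i,j)$-bigrading, for a constant $s$ determined by the signature. Writing $d_i=\mathrm{rk}\,\widetilde{\mathrm{Kh}}^{\,i,2i+s}(L)\ge 0$, the reduced graded Euler characteristic collapses to $\sum_i (-1)^i d_i\,q^{2i+s}$, and since the exponents $2i+s$ advance by $2$ as $i$ advances by $1$, the substitution $t=q^2$ identifies this, up to an overall monomial, with $V_L(t)$. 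Consequently, after reindexing so that $a_0$ is the bottom coefficient, one has $a_i=\pm(-1)^i d_i$.

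This already yields one half of the conjecture for free: whenever two successive coefficients $a_i,a_{i+1}$ are both nonzero they carry opposite signs, because the sign is governed by the parity of the homological grading while the power of $t$ increases by one. The force of the inequality $a_ia_{i+1}<0$ for all $0\le i\le m-1$ is therefore precisely that $d_i>0$ for every $i$ between the minimal and maximal homological gradings, i.e.\ that the homological support of $\widetilde{\mathrm{Kh}}(L)$ is a full interval with no internal gap. Thus I would reduce the conjecture to this \emph{no-gaps} statement. The $(2,n)$-torus links are exactly where it fails, since for the trefoil the reduced homology lives in gradings $0,2,3$ and skips $1$; this is why they must be excluded. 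For alternating $L$ the alternating-sign-and-no-gap property is essentially classical, so the real content is pushing the statement into the genuinely non-alternating quasi-alternating range.

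To establish no-gaps I would induct along the recursive definition of $\mathcal Q$. If $L$ carries a quasi-alternating crossing with smoothings $D_0,D_\infty$, then thinness together with the determinant additivity $\det(L)=\det(D_0)+\det(D_\infty)$ forces the reduced skein long exact sequence to split, so that $\widetilde{\mathrm{Kh}}(L)$ is, as a bigraded group, a shifted direct sum of $\widetilde{\mathrm{Kh}}(D_0)$ and $\widetilde{\mathrm{Kh}}(D_\infty)$; on the single diagonal this says the homological support of $L$ is the union of the support of $D_0$ with a once-shifted copy of the support of $D_\infty$. Since the two smoothings need not be prime, I would invoke multiplicativity under connected sum, $\widetilde{\mathrm{Kh}}(L_1\#L_2)\cong\widetilde{\mathrm{Kh}}(L_1)\otimes\widetilde{\mathrm{Kh}}(L_2)$, which on Jones polynomials is $V_{L_1\#L_2}=V_{L_1}V_{L_2}$: the convolution of two alternating, gapless coefficient sequences is again alternating and gapless, so connected sums of gapless thin links remain gapless.

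The crux, and where I expect the genuine difficulty, is the merging step: one must show that the union of the two intervals, the support of $D_0$ and the shifted support of $D_\infty$, is again an interval, while isolating exactly the $(2,n)$-torus exceptions. The obstacle is twofold. First, a smoothing may itself be, or contain as a summand, a $(2,n)$-torus link, whose gappy homology could a priori propagate a gap upward through the splitting; one must argue that when $L$ is prime and not a torus link the two supports always overlap or abut, so that a torus-type gap in one summand is filled by the other. Second, the shift controlling the splitting depends on the signature change across the skein triple, so one needs careful bookkeeping of $\sigma$ on $\{L,D_0,D_\infty\}$. My strategy for the overlap would be to track total ranks on the diagonal: $\det(L)=\det(D_0)+\det(D_\infty)$ says the two summands meet with complementary counts, and I would aim to show that this equality forces adjacency of the two homological intervals except precisely in the case where $L$ is assembled entirely from the Hopf/trefoil pattern, namely when $L$ is a $(2,n)$-torus link. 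Verifying that this is the only way a gap can survive — and that no spurious gaps arise at intermediate, possibly non-prime, stages — is the hard part of the argument.
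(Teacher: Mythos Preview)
The statement you are attempting to prove is \emph{not} proved in the paper. It is Conjecture~\ref{conj3}, quoted from \cite{chbili2019jones} as an open problem, and the paper merely follows it with Question~3 asking whether the specific links constructed earlier satisfy it. There is therefore no proof in the paper to compare your proposal against.

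As for the proposal itself: your reduction is sound up to the point you yourself flag. Thinness of quasi-alternating links does force the sign of $a_i$ to be governed by parity of the homological grading, so the content of the conjecture is exactly your no-gaps statement for the reduced Khovanov homology support. But from there you have a strategy, not a proof. The splitting of the skein long exact sequence and the connected-sum multiplicativity are correct ingredients, yet the decisive step --- showing that the two shifted intervals coming from $D_0$ and $D_\infty$ always overlap or abut unless $L$ is a $(2,n)$-torus link --- is precisely what you leave open. Your suggestion that the determinant additivity ``forces adjacency'' is not justified: $\det(D_0)+\det(D_\infty)=\det(L)$ constrains total ranks, not the positions of the supporting intervals, so it does not by itself rule out a gap. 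Moreover, the smoothings $D_0$ and $D_\infty$ can themselves be $(2,n)$-torus links (or have such factors) even when $L$ is prime and not of that form, and you give no mechanism for why the other summand must fill the resulting gap. This is exactly the obstacle that keeps the statement a conjecture; your write-up identifies it honestly but does not resolve it.
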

\begin{qst}
Do the quasi-alternating links arising as dealternator extensions satisfy Conjecture \ref{conj3}?
\end{qst}
\noindent\textbf{Acknowledgements}:
We thank the Referee for his/her comments which allowed to improve the paper.
\nocite{*}
\bibliographystyle{plain}
\bibliography{biblio.bib}
\end{document}